\setlist[enumerate,1]{label=\textup{(\arabic*)}}
\tikzset{cd/.style=matrix of math nodes,row sep=2em,column sep=2em, text height=1.5ex, text depth=0.5ex}
\tikzset{cdar/.style=->,auto}
\tikzset{overar/.style={draw=white,double=black,double distance=.4pt,very thick}}
\renewcommand{\PrintDOI}[1]{\href{http://dx.doi.org/\detokenize{#1}}{doi: \detokenize{#1}}}
\numberwithin{equation}{section}
\theoremstyle{plain}
\newtheorem{theorem}[equation]{Theorem}
\newtheorem{proposition}[equation]{Proposition}
\newtheorem{corollary}[equation]{Corollary}
\theoremstyle{definition}
\newtheorem{definition}[equation]{Definition}
\theoremstyle{remark}
\newtheorem{remark}[equation]{Remark}
\newtheorem{example}[equation]{Example}
\newcommand*{\Braiding}[2]{\begin{tikzpicture}[baseline]
    \draw[-] (0,0) -- (1.4ex,1.4ex) node[right,inner sep=0pt] {$\scriptstyle #2$};
    \draw[-,draw=white,line width=2.4pt] (0,1.4ex) -- (1.4ex,0);
    \draw[-] (1.4ex,0) -- (0,1.4ex) node[left,inner sep=0pt] {$\scriptstyle #1$};
  \end{tikzpicture}}
\newcommand*{\Dualbraiding}[2]{\begin{tikzpicture}[baseline]
    \draw[-] (1.4ex,0) -- (0,1.4ex) node[left,inner sep=0pt] {$\scriptstyle #1$};
    \draw[-,draw=white,line width=2.4pt] (0,0) -- (1.4ex,1.4ex);
    \draw[-] (0,0) -- (1.4ex,1.4ex) node[right,inner sep=0pt] {$\scriptstyle #2$};
  \end{tikzpicture}}
\newcommand*{\Rmattxt}{R}
\newcommand*{\Bialg}[1]{(#1,\Comult[#1])}
\newcommand*{\nb}{\nobreakdash}
\newcommand*{\Star}{$^*$\nb-}
\newcommand*{\C}{\mathbb C}
\newcommand*{\Z}{\mathbb Z}
\newcommand*{\N}{\mathbb N}
\newcommand*{\T}{\mathbb T}
\newcommand*{\G}[1][G]{\mathbb #1}
\newcommand*{\Comult}[1][]{\Delta_{#1}}
\newcommand*{\Qgrp}[2]{\mathbb{#1}=(#2,\Comult[#2])}
\newcommand*{\Bound}{\mathbb B}
\newcommand*{\Mat}[1]{\textup{M}_{#1}}
\newcommand*{\Contvin}{\textup C_0}
\newcommand*{\Cont}{\textup C}
\newcommand*{\Mor}{\textup{Mor}}
\newcommand*{\Id}{\textup{id}}
\newcommand*{\bichar}{\textup V}
\newcommand*{\Rmat}{\textup R}
\newcommand*{\Cst}{\textup C^*}
\newcommand*{\Cred}{\textup C^*_\textup r}
\newcommand*{\Cstcat}{\mathfrak{C^*alg}}
\newcommand*{\Hils}[1][H]{\mathcal{#1}}
\newcommand*{\U}{\mathcal U}
\newcommand*{\defeq}{\mathrel{\vcentcolon=}}
\newcommand*{\norm}[1]{\lVert#1\rVert}
\DeclareMathOperator{\Aut}{Aut}
\begin{document}
\title[Homogeneous quantum symmetries of finite quantum spaces over~$\mathbb{T}$]{Homogeneous quantum symmetries of finite spaces over the circle group}

\author{Sutanu Roy}
\email{sutanu@niser.ac.in}
\address{School of Mathematical Sciences\\
 National Institute of Science Education and Research  Bhubaneswar, HBNI\\
 Jatni, 752050\\
 India}

\begin{abstract}
Suppose \(D\) is a finite dimensional \(\Cst\)\nb-algebra carrying a continous action~\(\overline{\Pi}\) of the circle group~\(\T\). We study the quantum symmetry group of~\(D\), taking~\(\overline{\Pi}\) into account.  We show that they are braided compact quantum groups~\(\G\) over~\(\T\). Here, the \(\Rmattxt\)\nb-matrix, \(\Z\times\Z\ni(m,n)\to 
\zeta^{-m\cdot n}\in\T\), for a fixed~\(\zeta\in \T\), governs the braided structure. In particular, if~\(\overline{\Pi}\) is trivial,~\(\zeta=1\) or~\(D\) is commutative, then \(\G\) coincides with Wang's quantum group of automorphisms of~\(D\). Moreover, we show that the bosonisation of \(\G\) corresponds to the quantum symmetry group of the crossed product \(\Cst\)\nb-algebra~\(D\rtimes\Z\), where the~\(\Z\)\nb-action is generated by~\(\overline{\Pi}_{\zeta{^{-1}}}\).
\end{abstract}

\subjclass[2010]{81R50,46L55}
\keywords{quantum symmetries, braided compact quantum groups, finite quantum spaces, bosonisation}
\thanks{The author was partially supported by INSPIRE faculty award given by D.S.T., Government of India grant no. DST/INSPIRE/04/2016/000215. The author is grateful to  Suvrajit Bhattacharjee, Soumalya Joardar and Adam Skalski for stimulating discussions.}
\maketitle

\section{Introduction}
 \label{sec:Intro}
 
In the realm of compact quantum groups of Woronowicz~\cites{W1987a,W1998a}, the inception of the theory of quantum symmetry groups of classical and quantum space goes back to the seminal paper of Wang~\cite{W1998}. In that article, Wang studied quantum symmetry groups of finite sets and finite dimensional noncommutative \(\Cst\)\nb-algebras or finite quantum spaces equipped with a reference state. They are extended to several discrete structures by Banica, Bichon and several others (\cites{B2005,B2005a,B2003,JM2018,SW2018} and the references therein). On the other hand, Goswami~\cite{G2009} introduced the continuous counterpart, namely, the quantum isometry group for spectral triples. He and his collaborators studied it extensively~\cites{BG2009a,BG2010a,GJ2018,G2020}. Goswami's work motivated Banica and Skalski to develop a general framework of quantum symmetry groups based on orthogonal filtrations of unital \(\Cst\)\nb-algebras~\cite{BS2013}. 

Semidirect product construction of groups is a fundamental way of extending some homogeneous symmetries to inhomogeneous symmetries of a physical system. This perspective suggests that, within the framework of the noncommutative geometry, semidirect product construction for quantum groups might be a natural way to obtain inhomogeneous symmetries of quantum spaces or \(\Cst\)\nb-algebras from their homogeneous symmetries. Several interesting examples of unital \(\Cst\)\nb-algebras~\(D\) naturally carry a continuous action~\(\gamma\) of the circle group~\(\T\), that preserves a distinguished faithful state, say~\(\phi\), on \(D\). Fix~\(\zeta\in\T\). We view the circle group~\(\T\) a quasitriangular quantum group with respect to the \(\Rmattxt\)\nb-matrix~\(\Rmat\), which is a bicharacter on~\(\Z\), defined by 
\begin{equation}
 \label{eq:Rmat}
 \Rmat(l,m)=\zeta^{-lm}, 
 \qquad\text{for all~\(l,m\in\Z\).}
\end{equation}
The quantum symmetries of compact type of \(D\) encoding and respecting the information of the action~\(\gamma\) are the \(\T\)\nb-\emph{homogeneous compact quantum symmetries} of the system \((D,\gamma,\phi)\). If such an object exists, it becomes a \emph{braided compact quantum group} \(\G\) over~\(\T\) (see~\cite{MRW2016}*{Definition 6.1}). The \(\T\)\nb-action~\(\gamma\) and the above \(\Rmattxt\)\nb-matrix govern the braided structure of~\(\G\). Moreover, we expect to capture the inhomogeneous compact quantum symmetries of \((D,\gamma,\phi)\) in terms of the bosonisation of \(\G\) (see~\cite{MRW2016}*{Theorem 6.4}). 

This article aims to support the above model for finite dimensional \(\Cst\)\nb-algebras \(D\) carrying an action of~\(\T\). If~\(D\) is commutative with~\(\textup{dim}(D)=n\) for some natural number~\(n\), then it is generated by \(n\) many commuting projections~\(p_{1},\cdots p_{n}\) satisfying~\(\sum_{i=1}^{n}p_{i}=1\). If~\(\gamma\) is  a continuous action of~\(\T\) on~\(D\) then~\(\gamma_{z}(p_{i})\) is self adjoint and \(\gamma_{z}(p_{i})=\gamma_{z}(p_{i})^{2}\) for all~\(z\in\T\) and~\(1\leq i\leq n\). This forces~\(\gamma\) to become trivial. Therefore, the quantum symmetry group of~\(D\) corresponds to Wang's quantum permutation group~\cite{W1998}*{Theorem 3.1}. So, we consider~\(D\) to be noncommutative, that is, \(D\) is isomorphic to a finite direct sum of matrix algebras.
 
In particular, let~\(D=\Mat{n}(\C)\) for a fixed number~\(n\).
The matrix algebra~\(\Mat{n}(\C)\) is the universal \(\Cst\)\nb-algebra generated by 
 \(\{E_{ij}\}_{1\leq i,j\leq n}\) subject to the following conditions: 
\begin{equation}
 \label{eq:gen_Mat}
  E_{ij}E_{kl}=\delta_{j,k}\cdot E_{il},
  \qquad 
  E_{ij}^{*}=E_{ji},
  \qquad 
 \sum_{i=1}^{n}E_{ii}=1.
\end{equation}
Let~\(d_{1}\leq d_{2}\leq \cdots \leq d_{n}\) be integers.
Then it is easy to verify that~\(\Pi\), defined by 
\begin{equation}
 \label{eq:T-act-Mat}
  \Pi_{z}(E_{ij})\defeq z^{d_{i}-d_{j}}E_{ij} \quad\text{for all \(z\in\T\),}
\end{equation}
is a continuous action of~\(\T\) on~\(\Mat{n}(\C)\).   
Let~\(\phi\) be the canonical normalised trace on~\(\Mat{n}(\C)\), that is, \(\phi(E_{ij})=\frac{1}{n}\cdot \delta_{i,j}\) for all~\(1\leq i,j\leq n\). The action \(\Pi\) of~\(\T\) on~\(\Mat{n}(\C)\) preserves \(\phi\), that is, \(\phi\circ \Pi_{z}(\cdot)=\phi(\cdot) 1\). 

Consider the braided tensor product~\(\boxtimes_{\zeta}\) of unital \(\Cst\)\nb-algebras 
with~\(\T\)\nb-action discussed in Section~\ref{subsec:braidedtensor}. The \emph{braided compact quantum symmetries} of the triple \((\Mat{n}(\C),\Pi,\phi)\) are  \(\phi\) preserving actions \(\eta\colon\Mat{n}(\C)\to\Mat{n}(\C)\boxtimes_{\zeta}B\) of braided compact quantum groups~\(\G=(B,\beta,\Comult[B])\) over~\(\T\) on~\((\Mat{n}(\C),\Pi)\) in the sense of the Definitions~\ref{def:br_CQG} and \ref{def:br_act}, respectively. In fact, \(\eta\) is uniquely determined by its matrix coefficients~\(\{b^{ij}_{kl}\}_{1\leq i,j,k,l\leq n}\subseteq B\):
 \begin{equation}
  \label{eq:Mn-action}
  \eta(E_{ij})=\sum_{k,l=1}^{n}E_{kl}\boxtimes_{\zeta} b^{kl}_{ij}, 
  \qquad\text{for all~\(1\leq i,j\leq n\).}
 \end{equation}
We construct a braided compact quantum group, denoted by~\(\Aut(\Mat{n}(\C),\Pi,\phi)\), in Theorem~\ref{the:qnt_aut_matrx}. Subsequently, we observe in Proposition~\ref{prop:action_qnt_on_Matx} that it acts faithfully on~\((\Mat{n}(\C),\Pi)\) and preserves~\(\phi\). 

According to~\cite{MRW2016}*{Section 6}, every braided compact quantum group over~\(\T\) uniquely corresponds to an ordinary compact quantum group. In the purely algebraic setting, this was discovered by Radford~\cite{R1985} and extensively studied by Majid~\cite{M1994} under the name \emph{bosonisation}. In Section~\ref{sec:Boson}, we construct the bosonisation of \(\Aut(\Mat{n}(\C),\Pi,\phi)\), denoted by~\(\textup{Bos}(\Aut(\Mat{n}(\C),\Pi,\phi))\).

In Proposition~\ref{prop:bos_act_lift}, we observe that \(\textup{Bos}(\Aut(\Mat{n}(\C),\Pi,\phi))\) acts on~\(\Mat{n}(\C)\boxtimes_{\zeta}\Cont(\T)\), extending the (braided) action of~\(\Aut(\Mat{n}(\C),\Pi,\phi)\) on \((\Mat{n}(\C),\Pi)\) and the action of~\(\T\) on~\(\Cont(\T)\) by translation. Then we use it to prove the universal property of~\(\Aut(\Mat{n}(\C),\Pi,\phi)\) in Theorem~\ref{the:univ_obj}. 

The \(\Rmattxt\)\nb-matrix~\eqref{eq:Rmat} induces an action of~\(\Z\) on~\(\Mat{n}(\C)\) generated by~\(\Pi_{\zeta^{-1}}\). Then the associated crossed product~\(\Mat{n}(\C)\rtimes\Z\) becomes isomorphic to \(\Cont(\T)\boxtimes_{\zeta}\Mat{n}(\C)\). Consider the orthogonal filtrations~\(\widetilde{\Mat{n}(\C)}\) and~\(\widetilde{\Cont(\T)}\) of~\(\Mat{n}(\C)\) and \(\Cont(\T)\) given in Example~\ref{ex:Mat_filt} and Example~\ref{ex:Circle_filt}, respectively. Then we apply \cite{BMRS2018}*{Proposition 4.9} to construct an orthogonal filtration of~\(\Cont(\T)\boxtimes_{\zeta}\Mat{n}(\C)\). Consequently, in Proposition~\ref{prop:Boson_isometry}, we prove that the corresponding quantum symmetry group of~\(\Cont(\T)\boxtimes_{\zeta}\Mat{n}(\C)\) is isomorphic to~\(\textup{Bos}(\Aut(\Mat{n}(\C),\Pi,\phi))\). This is the universal property of the bosonisation of~\(\Aut(\Mat{n}(\C),\Pi,\phi)\). Furthermore, combining it with~\cite{BMRS2018}*{Theorem 5.1} we also observe~\(\textup{Bos}(\Aut(\Mat{n}(\C),\Pi,\phi))\cong\mathfrak{D}_{\bichar}\) in Corollary~\ref{cor:bos_dinf}, where~\(\mathfrak{D}_{\bichar}\) is the Drinfeld's double of the quantum symmetry groups \(\textup{QISO}(\widetilde{\Cont(\T)})\) and~\(\textup{QISO}(\widetilde{\Mat{n}(\C)})\) with respect to some bicharacter~\(\bichar\) suitably chosen. 

Within the scope of this article, the bosonisation picture has significant advantages over the Drinfeld's double, both computationally and conceptually. Indeed, the concrete computation of the bosonisation of a braided compact quantum group over~\(\T\) is more straightforward than the computation of Drinfeld’s double. Secondly, in Remark~\ref{rem:inhomogeneous}, we observe that~\(\Aut(\Mat{n}(\C),\Pi,\phi)\) is the homogeneous quantum symmetry group of the \(\Cst\)\nb-dynamical system \((\Mat{n}(\C),\Z,\Pi_{\zeta^{-1}})\) obtained from \((\Mat{n}(\C),\Pi)\). At the same time, its bosonisation (semidirect product) corresponds to the inhomogeneous (extending \(\T\)\nb-homogeneous) quantum symmetry group of the same system. Conceptually, this approach is more natural while computing the quantum symmetry of the given the \(\Cst\)\nb-dynamical system~\((\Mat{n}(\C),\Z,\Pi_{\zeta^{-1}})\).

Finally, in Section~\ref{sec:direct_sum}, we employ these analyses to study similar results for the direct sum of matrix algebras. As a byproduct, we observe in Remark~\ref{rem:comm_finpt} that a finite classical space, that is,~\(D\cong\C^{m}\) for any natural number~\(m\), has no braided quantum symmetries (over~\(\T\)), as expected.

\section{Preliminaries}
All Hilbert spaces and \(\Cst\)\nb-algebras (which are not explicitly multiplier algebras) are assumed to be separable. For any subsets~\(X,Y\) of a given unital~\(\Cst\)\nb-algebra~\(D\), the norm closure of \(\{xy\mid x\in X, y\in Y\}\subseteq D\) is denoted by~\(XY\). We use \(\otimes\) for both the tensor product of Hilbert spaces and the minimal tensor product of \(\Cst\)\nb-algebras, which is well understood from the context.
\label{prelim}
 \subsection{Compact quantum groups and their actions}
 A \emph{compact quantum group} is a pair~\(\G=\Bialg{A}\) consisting of a unital 
 \(\Cst\)\nb-algebra~\(A\) and a unital \Star{}homomorphism \(\Comult[A]\colon 
 A\to A\otimes A\) such that
 \begin{enumerate}
  \item \(\Comult[A]\) is coassociative: \((\Id_{A}\otimes\Comult[A])\circ\Comult[A] 
  =(\Comult[A]\otimes\Id_{A})\circ\Comult[A]\);
  \item \(\Comult[A]\) satisfies the cancellation conditions: \(\Comult[A](A)(1_{A}\otimes A)
  =A\otimes A=\Comult[A](A)(A\otimes 1_{A})\).
 \end{enumerate}
 Let~\(D\) be a unital~\(\Cst\)\nb-algebra. An action of a compact quantum group~\(\Qgrp{G}{A}\) on~\(D\) is a unital \Star{}homomorphism~\(\gamma\colon D\to 
 D\otimes A\) such that 
 \begin{enumerate}
  \item \(\gamma\) satisfies the action equation, that is, \((\Id_{D}\otimes\Comult[A])\circ\gamma=(\gamma\otimes\Id_{A})\circ\gamma\);
  \item \(\gamma\) satisfies the \emph{Podle\'s condition}: \(\gamma(D)(1_{D}\otimes A)
  =D\otimes A\).
 \end{enumerate}
 Suppose~\(\tau\colon D\to\C\) be a state. Then~\(\gamma\) is said to be~\emph{\(\tau\)\nb-preserving} if~\((\tau\otimes\Id_{A})\gamma(d)=\tau(d)\cdot 1_{A}\) for all~\(d\in D\). In fact, \(\gamma\) is injective whenever~\(\tau\) is faithful. 
 
 \begin{example}
  Represent~\(\Cont(\T)\) on~\(L^{2}(\T)\) faithfully by pointwise multiplication.  Let~\(z\in\Cont(\T)\) be the unitary operator of pointwise multiplication with the identity function on~\(\T\). Then \(\Comult[\Cont(\T)](z)=z\otimes z\) defines the comultiplication map 
  on~\(\Cont(\T)\). The pair~\(\Bialg{\Cont(\T)}\) is~\(\T\) viewed as a compact 
  quantum group. Suppose~\(D\) is a unital \(\Cst\)\nb-algebra and~\(\gamma\colon\T\to\Aut (D)\) is a continuous action. An element~\(d\in D\) is a homogeneous element of degree~\(\textup{deg}(d)\in\Z\) if~\(\gamma_{z}(d)=z^{\textup{deg}(d)}\cdot d\) for all~\(z\in\T\). Equivalently, 
\(\gamma(d)=d\otimes z^{\textup{deg}(d)}\) while~\(\gamma\) is viewed as 
an action of~\(\Bialg{\Cont(\T)}\) on~\(D\). 
 \end{example}

 \subsection{Braided tensor product of C*-algebras}
  \label{subsec:braidedtensor}
 Let~\(\Hils[L]_{1}\) and~\(\Hils[L]_{2}\) be separable Hilbert spaces. Let~\(\pi_{i}\colon\T\to\U(\Hils[L]_{i})\) be continuous representations for~\(i=1,2\). Suppose, 
 \(\{\lambda_{m}^{i}\}_{m\in\N}\) is an orthonormal basis of~\(\Hils[L]_{i}\) consisting of the eigenvectors for~\(\pi^{i}\), that is~\((\pi_{i})_{z}(\lambda_{m}^{i})=z^{l^{i}_{m}}\lambda_{m}^{i}\) for some~\(l_{m}^{i}\in\N\), for~\(i=1,2\). 
 
 By~\cite{MRW2016}*{Equation (3.2) \& Proposition 3.2}, the braiding unitary~\(\Braiding{\Hils[L]_{1}}{\Hils[L]_{2}}\colon\Hils[L]_{1}\otimes\Hils[L]_{2}\to\Hils[L]_{2}\otimes\Hils[L]_{1}\) and its inverse 
 \(\Dualbraiding{\Hils[L]_{2}}{\Hils[L]_{1}}\colon\Hils[L]_{2}\otimes\Hils[L]_{1}\to 
 \Hils[L]_{1}\otimes\Hils[L]_{2}\) associated to the \(\Rmat\)\nb-matrix~\eqref{eq:Rmat} are defined on the basis elements by
 \begin{equation}
  \label{eq:braiddef}
  \Braiding{\Hils[L]_{1}}{\Hils[L]_{2}}(\lambda_{m}^{1}\otimes\lambda_{n}^{2})
  =\zeta^{l_{m}^{1}l_{n}^{2}}(\lambda_{n}^{2}\otimes\lambda_{m}^{1}), 
  \qquad 
  \Dualbraiding{\Hils[L]_{2}}{\Hils[L]_{1}}(\lambda_{n}^{2}\otimes\lambda_{m}^{1})
  =\zeta^{-l_{m}^{1}l_{n}^{2}}(\lambda_{m}^{1}\otimes\lambda_{n}^{2}).
 \end{equation}
 Consider the category \(\Cstcat(\T)\) consisting of unital \(\Cst\)\nb-algebras with a continuous action of~\(\T\) as objects and \(\T\)\nb-equivariant unital~\Star{}homomorphisms as arrows. 
 
 Let~\((D_{i},\gamma_{i})\) are objects of~\(\Cstcat(\T)\) and~\(\psi_{i}\colon D_{i}\hookrightarrow \Bound(\Hils[L]_{i})\) be faithful \(\T\)\nb-equivariant representations 
 of~\((D_{i},\gamma_{i})\), respectively, for~\(i=1,2\). For any homogeneous elements 
 \(d_{i}\in D_{i}\) we have 
 \begin{align}
  \label{eq:cov-rep}
  & (\pi_{i})_{z}(d_{i}\lambda_{m}^{i})
  =\bigl((\psi_{i})_{z}(d_{i})\bigr) \bigl((\pi_{i})_{z}(\lambda_{m}^{i})\bigr)
  =z^{\text{deg}(d_{i})+l_{m}^{i}}d_{i}\lambda_{m}^{i},\\
  \label{eq:cov-rep-star}  
 & (\pi_{i})_{z}(d_{i}^{*}\lambda_{m}^{i})
  =\bigl((\psi_{i})_{z}(d_{i}^{*})\bigr) \bigl((\pi_{i})_{z}(\lambda_{m}^{i})\bigr)
  =z^{-\text{deg}(d_{i})+l_{m}^{i}}d_{i}\lambda_{m}^{i},
 \end{align}
 for all~\(z\in\T\) and~\(i=1,2\).
 
A unital \Star{}homomorphism \(f\colon D_{1}\to D_{2}\) is an arrow~\(D_{1}\to D_{2}\) in~\(\Cstcat(\T)\) if it is~\emph{\(\T\)\nb-equivariant}: \(f\circ (\gamma_1)_{z}=(\gamma_{1})_{z}\circ f\) for all~\(z\in\T\). The set of arrows~\(D_{1}\to D_{2}\) in~\(\Cstcat(\T)\) is denoted by~\(\Mor^{\T}(D_{1},D_{2})\). 

The~\(\Rmattxt\)\nb-matrix~\eqref{eq:Rmat} defines the \emph{braided tensor product}~\(D_{1}\boxtimes_{\zeta}D_{2}=j_{1}(D_{1})j_{2}(D_{2})\), 
where~\(j_{i}\colon D_{i}\to D_{1}\boxtimes_{\zeta}D_{2}\) are the canonical embeddings, for~\(i=1,2\). The concrete descriptions of these maps are given by 
\[
  j_{1}(d_{1})=\psi_{1}(d_{1})\otimes 1_{\Bound(\Hils[L]_{2})}, 
  \qquad 
  j_{2}(d_{2})=\Braiding{\Hils[L]_{1}}{\Hils[L]_{2}}
  (\psi_{2}(d_{2})\otimes 1_{\Bound(\Hils[L]_{1})})
  \Dualbraiding{\Hils[L]_{2}}{\Hils[L]_{1}},
\]
for all~\(d_{i}\in D_{i}\), where~\(i=1,2\). In particular, if \(d_{i}\in D_{i}\) are 
the homogeneous elements, then the commutation relation between the elementary tensor factors~\(j_{1}(d_{1})\defeq d_{1}\boxtimes_{\zeta}1_{D_{2}}\) and~\(j_{2}(d_{2})\defeq 1_{D_{1}}\boxtimes_{\zeta}d_{2}\) is given by
\begin{equation} 
 \label{eq:comm-brd_tensor}
    d_{1}\boxtimes_{\zeta}d_{2}
 \defeq (d_{1}\boxtimes_{\zeta} 1_{D_{2}})\cdot (1_{D_{1}}\boxtimes_{\zeta} d_{2}) 
 = \zeta^{-\textup{deg}(d_{1})\textup{deg}(d_{2})}
    (1_{D_{1}}\boxtimes_{\zeta} d_{2}) \cdot  (d_{1}\boxtimes_{\zeta} 1_{D_{2}}).
\end{equation}
The braided tensor product \(D_{1}\boxtimes_{\zeta}D_{2}\) carries the continuous diagonal action~\((\gamma_{1}\bowtie\gamma_{2})_{z}(d_{1}\boxtimes_{\zeta}d_{2})\defeq(\gamma_{1})_{z}(d_{1})\boxtimes_{\zeta}(\gamma_{2})_{z}(d_{2})\) for all~\(z\in\T\). Thus \((D_{1}\boxtimes_{\zeta}D_{2},\alpha\bowtie\beta)\) is an object of~\(\Cstcat(\T)\). Furthermore, \((\Cstcat(\T),\boxtimes_{\zeta})\) is a monoidal category with~\(\C\) along with the trivial action of~\(\T\) as monoidal unit. We refer to~\cites{MRW2014, MRW2016} for more details of this construction. In particular, \(\boxtimes_{1}\) coincides with the minimal tensor product~\(\otimes\) of~\(\Cst\)\nb-algebras. Also, 
\(A\boxtimes_{\zeta}B\cong A\otimes B\) if either~\(\alpha\) or~\(\beta\) is trivial.

\medskip 

 Suppose~\((B,\beta)\) and~\((D,\gamma)\) are objects of the category~\(\Cstcat(\T)\). 
By virtue of~\cite{MRW2016}*{Proposition 3.6}, there exists an injective unital~\Star{}homomorphism \(\Psi^{D,B}\colon \Cont(\T)\boxtimes_{\zeta} D\boxtimes_{\zeta}B\to 
  (\Cont(\T)\boxtimes_{\zeta}D)\otimes (\Cont(\T)\boxtimes_{\zeta}B)\). For~\(a\in\Cont(\T)\), \(d\in D\), \(b\in B\) it is defined by
  \begin{align}
   \label{eq:Psi}
   \begin{split}
    & \Psi^{D,B}(j_{1}(a)) =(j_{1}\otimes j_{1})\Comult[\Cont(\T)](a), 
    \qquad
    \Psi^{D,B}(j_{2}(b)) =(j_{2}\otimes j_{1})\gamma(d),\\
    &\Psi^{D,B}(j_{3}(b)) =1_{\Cont(\T)\boxtimes_{\zeta}D}\otimes j_{2}(b).
   \end{split}
  \end{align}
 Here we view the action~\(\beta\) as a unital \Star{}homomorphism~\(\beta\colon B\to B\otimes\Cont(\T)\). These maps will play crucial roles in Sections~\ref{sec:Boson} and~\ref{sec:univ-prop}.
\section{Action of braided quantum groups on matrix algebras}
 \label{sec:act_brd_cpt}
Fix~\(\zeta\in\T\) and recall the monoidal category~\((\Cstcat(\T),\boxtimes_{\zeta})\) from the previous section.
\begin{definition}[compare with~\cite{MRW2016}*{Definition 6.1}]
 \label{def:br_CQG}
A triple~\(\G=(A,\alpha,\Comult[A])\) is a \emph{braided compact quantum group}  in~\((\Cstcat,\boxtimes_{\zeta})\) if \((A,\alpha)\) is an object of~\(\Cstcat(\T)\) and~\(\Comult[A]\in\Mor^{\T}(A,A\boxtimes_{\zeta}A)\) such that
\begin{enumerate}
 \item \(\Comult[A]\) is coassociative 
  \begin{equation}
   \label{eq:comult_coasso}
   (\Comult[A]\boxtimes_{\zeta}\Id_{A})\circ\Comult[A]
 =(\Id_{A}\boxtimes_{\zeta}\Comult[A])\circ\Comult[A] ;
 \end{equation}
 \item \(\Comult[A]\) satisfies the cancellation conditions:
 \begin{equation}
  \label{eq:comult_cancel}
   \Comult[A](A) (1_{A}\boxtimes_{\zeta}A)=A\boxtimes_{\zeta}A=\Comult[A](A)(A\boxtimes_{\zeta}1_{A}).
 \end{equation}
\end{enumerate}
Then we say~\(\G=(A,\alpha,\Comult[A])\) is a \emph{braided compact quantum group over~\(\T\)}. We refer to~\cites{KMRW2016,MR2019a} for genuine examples of braided compact quantum groups over~\(\T\). Indeed, \(\G\) is an ordinary compact quantum group if either~\(\zeta=1\) or the action~\(\alpha\) is trivial.
\end{definition}
\begin{theorem}
 \label{the:qnt_aut_matrx}
 For a given natural number~\(n\), let~\(d_{1}\leq d_{2}\leq \cdots \leq d_{n}\) be integers. Let~\(A\) be the universal~\(\Cst\)\nb-algebra generated by~\((u^{ij}_{kl})_{1\leq i,j,k,l\leq n}\) subject to the following relations:
\begin{align}
 \label{eq:cond-2}
 &\sum_{t=1}^{n}(\zeta^{d_{t}(d_{k}-d_{i}+d_{t}-d_{s})}u^{it}_{ks})\cdot 
 (\zeta^{d_{j}(d_{m}-d_{t}+d_{j}-d_{l})}u^{tj}_{ml})\\ \nonumber
 &=\delta_{s,m}\cdot (\zeta^{d_{j}(d_{k}-d_{i}+d_{j}-d_{l}}u^{ij}_{kl}),
  \quad\text{for all~\(1\leq i,j,k,l,m,n\leq n\)\textup{;}} \\
 \label{eq:cond-3}
 & \sum_{t=1}^{n} (\zeta^{d_{s}(d_{k}-d_{i}+d_{s}-d_{t})}u^{is}_{kt})\cdot 
 (\zeta^{d_{j}(d_{t}-d_{m}+d_{j}-d_{l})}u^{mj}_{tl})\\ \nonumber
 &=\delta_{s,m}\cdot (\zeta^{d_{j}(d_{k}-d_{i}+d_{j}-d_{l})}u^{ij}_{kl}),
\quad \text{for all~\(1\leq i,j,k,l,m,n\leq n\)\textup{;}} \\
 \label{eq:cond-4}
  & u_{kl}^{ij}{ }^{*}
   = \zeta^{(d_{i}-d_{j})(d_{l}-d_{j}+d_{i}-d_{k})}(u^{ji}_{lk}), 
   \quad \text{for all~\(1\leq i,j,k,l\leq n\)\textup{;}} \\
  \label{eq:cond-5}
  & \sum_{r=1}^{n}u^{ij}_{rr}
  =\delta_{i,j},
  \quad \text{for all~\(1\leq i,j\leq n\)\textup{;}} \\
  \label{eq:cond-6}
  & \sum_{r=1}^{n}u^{rr}_{kl}
  =\delta_{k,l},
  \quad\text{for all~\(1\leq i,j\leq n\)\textup{.}}
\end{align}
There exists a unique continuous action~\(\alpha\) of~\(\T\) on~\(A\) satisfying
\begin{equation}
  \label{eq:cond-1}
  \alpha_{z}(u^{ij}_{kl}) 
    =z^{d_{k}-d_{i}+d_{j}-d_{l}}u^{ij}_{kl},
 \quad \text{for all~\(z\in\T\);}\\ 
\end{equation}
Thus, \((A,\alpha)\) is an object of~\(\Cstcat(\T)\). There exists a unique 
\(\Comult[A]\in\Mor^{\T}(A,A\boxtimes_{\zeta} A)\) such that 
\begin{equation}
 \label{eq:brd_comult}
 \Comult[A](u^{ij}_{kl})=\sum_{r,s=1}^{n}u^{ij}_{rs}\boxtimes_{\zeta} u^{rs}_{kl},
 \qquad\text{\(1\leq i,j,k,l\leq n\).}
\end{equation}
Moreover, \(\Comult[A]\) satisfies~\eqref{eq:comult_coasso}-\eqref{eq:comult_cancel}.
Then~\((A,\alpha,\Comult[A])\) is a braided compact quantum group over~\(\T\) denoted by~\(\Aut(\Mat{n}(\C),\Pi,\phi)\).
\end{theorem}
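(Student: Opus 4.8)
The plan is to verify, in turn, that the universal $\Cst$\nb-algebra $A$ exists, that \eqref{eq:cond-1} determines a unique continuous $\T$\nb-action $\alpha$ on $A$, that \eqref{eq:brd_comult} determines a unique $\Comult[A]\in\Mor^{\T}(A,A\boxtimes_{\zeta}A)$, and finally that this $\Comult[A]$ is coassociative and satisfies the cancellation conditions. It is convenient to pass to the rescaled generators
\[
 w^{ij}_{kl}\defeq\zeta^{d_{j}(d_{k}-d_{i}+d_{j}-d_{l})}\,u^{ij}_{kl},
 \qquad 1\leq i,j,k,l\leq n,
\]
in terms of which \eqref{eq:cond-2}--\eqref{eq:cond-6} assume the cleaner shape
\begin{gather*}
 \textstyle\sum_{t}w^{it}_{ks}\,w^{tj}_{ml}=\delta_{s,m}\,w^{ij}_{kl},\qquad
 \sum_{t}w^{is}_{kt}\,w^{mj}_{tl}=\delta_{s,m}\,w^{ij}_{kl},\\
 (w^{ij}_{kl})^{*}=w^{ji}_{lk},\qquad
 \textstyle\sum_{r}w^{ij}_{rr}=\delta_{i,j},\qquad
 \sum_{r}\zeta^{-d_{r}(d_{k}-d_{l})}\,w^{rr}_{kl}=\delta_{k,l},
\end{gather*}
and in which each $w^{ij}_{kl}$ carries the same prospective $\alpha$\nb-degree $d_{k}-d_{i}+d_{j}-d_{l}$ as $u^{ij}_{kl}$. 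Existence of $A$ rests on a boundedness estimate: specialising $j=i$, $l=k$, $m=s$ in $\sum_{t}w^{is}_{kt}w^{mj}_{tl}=\delta_{s,m}w^{ij}_{kl}$ and using $(w^{ij}_{kl})^{*}=w^{ji}_{lk}$ gives $\sum_{t}w^{is}_{kt}(w^{is}_{kt})^{*}=w^{ii}_{kk}$ for all $i,k,s$; since $\sum_{i,k}w^{ii}_{kk}=n\cdot 1$ by $\sum_{r}w^{ij}_{rr}=\delta_{i,j}$ and each $w^{ii}_{kk}$ is a positive sum of the elements $w^{is}_{kt}(w^{is}_{kt})^{*}$, we obtain $\norm{w^{ij}_{kl}}^{2}\leq n$, hence $\norm{u^{ij}_{kl}}\leq\sqrt{n}$, for all indices. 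Therefore the universal $\Cst$\nb-algebra $A$ generated by $(u^{ij}_{kl})_{1\leq i,j,k,l\leq n}$ subject to \eqref{eq:cond-2}--\eqref{eq:cond-6} is well defined.

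For the action: each relation \eqref{eq:cond-2}--\eqref{eq:cond-6} is $\T$\nb-homogeneous once one records $\deg(u^{ij}_{kl})=d_{k}-d_{i}+d_{j}-d_{l}$, because the $\zeta$\nb-scalars occurring there are constants and, on the support of each Kronecker delta, the two factors of a quadratic relation have equal degree while the linear relations carry degree $0$. Hence $u^{ij}_{kl}\mapsto z^{d_{k}-d_{i}+d_{j}-d_{l}}u^{ij}_{kl}$ extends to a \Star{}automorphism $\alpha_{z}$ of $A$; the assignment $z\mapsto\alpha_{z}$ is a group homomorphism, is strongly continuous by an $\varepsilon/3$\nb-argument on the dense \Star{}subalgebra generated by the $u^{ij}_{kl}$, and is uniquely determined by \eqref{eq:cond-1}. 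Thus $(A,\alpha)$ is an object of $\Cstcat(\T)$.

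For the comultiplication: set $U^{ij}_{kl}\defeq\sum_{r,s}u^{ij}_{rs}\boxtimes_{\zeta}u^{rs}_{kl}\in A\boxtimes_{\zeta}A$, which is meaningful since the $u^{ij}_{rs}$ and $u^{rs}_{kl}$ are $\alpha$\nb-homogeneous. One checks that the $U^{ij}_{kl}$ again satisfy \eqref{eq:cond-2}--\eqref{eq:cond-6}: the braided commutation rule \eqref{eq:comm-brd_tensor}, used to reorder a product of two $U$'s, reproduces precisely the $\zeta$\nb-twists appearing in \eqref{eq:cond-2} and \eqref{eq:cond-3}; relation \eqref{eq:cond-4} for the $U$'s reduces to \eqref{eq:cond-4} for the $u$'s after taking adjoints in $A\boxtimes_{\zeta}A$ and reordering; and \eqref{eq:cond-5}--\eqref{eq:cond-6} for the $U$'s follow from those for the $u$'s by collapsing one of the two summations (this is quickest in the $w$\nb-coordinates). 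By the universal property of $A$ there is then a unique unital \Star{}homomorphism $\Comult[A]\colon A\to A\boxtimes_{\zeta}A$ with $\Comult[A](u^{ij}_{kl})=U^{ij}_{kl}$, i.e.\ obeying \eqref{eq:brd_comult}. Equivariance is immediate on generators: $\Comult[A]\bigl(\alpha_{z}(u^{ij}_{kl})\bigr)=z^{d_{k}-d_{i}+d_{j}-d_{l}}U^{ij}_{kl}$, whereas $(\alpha\bowtie\alpha)_{z}\bigl(\Comult[A](u^{ij}_{kl})\bigr)=\sum_{r,s}z^{(d_{r}-d_{i}+d_{j}-d_{s})+(d_{k}-d_{r}+d_{s}-d_{l})}u^{ij}_{rs}\boxtimes_{\zeta}u^{rs}_{kl}$, and the two exponents telescope to $d_{k}-d_{i}+d_{j}-d_{l}$; hence $\Comult[A]\in\Mor^{\T}(A,A\boxtimes_{\zeta}A)$. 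Coassociativity \eqref{eq:comult_coasso} is verified on generators inside the iterated braided tensor product $A\boxtimes_{\zeta}A\boxtimes_{\zeta}A$ (whose associativity and coherence are supplied by~\cite{MRW2016}): both composites carry $u^{ij}_{kl}$ to $\sum_{p,q,r,s}u^{ij}_{pq}\boxtimes_{\zeta}u^{pq}_{rs}\boxtimes_{\zeta}u^{rs}_{kl}$.

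The remaining and most delicate point, which I expect to be the principal obstacle, is the pair of cancellation conditions \eqref{eq:comult_cancel}. The strategy follows the one for quantum automorphism groups: the twisted relations \eqref{eq:cond-2}--\eqref{eq:cond-4} encode that the $n^{2}\times n^{2}$ matrix $U=(u^{ij}_{kl})_{(ij),(kl)}$ is a unitary corepresentation of $A$ in the braided sense of~\cites{MRW2014,MRW2016} (specialising, when $\zeta=1$, to the fundamental corepresentation of Wang's quantum automorphism group of $(\Mat{n}(\C),\phi)$), while \eqref{eq:cond-5}--\eqref{eq:cond-6} state that $U$ leaves fixed the distinguished vectors of $\Mat{n}(\C)$ determined by $1_{\Mat{n}(\C)}$ and by the trace $\phi$. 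Invertibility of $U$, together with that of its transpose (equivalently, conjugate), then lets one solve for the generators of $A\boxtimes_{\zeta}A$ inside the closed linear spans $\overline{\Comult[A](A)(1_{A}\boxtimes_{\zeta}A)}$ and $\overline{\Comult[A](A)(A\boxtimes_{\zeta}1_{A})}$: one right-multiplies $\Comult[A](w^{ij}_{ab})$ by a suitable element of the form $1_{A}\boxtimes_{\zeta}(\text{monomial in the }w^{cd}_{ef})$ and collapses the resulting sum over the second tensor legs using one of the corepresentation relations above, the braiding intervening only through the scalar twists that the degree bookkeeping already controls. As the $u^{ij}_{kl}$ generate $A$, this yields both equalities in \eqref{eq:comult_cancel}, so $(A,\alpha,\Comult[A])$ is a braided compact quantum group over~$\T$, which we denote $\Aut(\Mat{n}(\C),\Pi,\phi)$.
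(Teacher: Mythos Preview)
Your approach matches the paper's in all essential points: existence of $A$ via a norm bound on the generators, the $\T$\nb-action by checking homogeneity of the relations, $\Comult[A]$ by verifying the relations for the $U^{ij}_{kl}$, and cancellation via the unitary corepresentation structure. Two differences are worth noting. First, your rescaling $w^{ij}_{kl}=\zeta^{d_{j}(d_{k}-d_{i}+d_{j}-d_{l})}u^{ij}_{kl}$ does not appear in the paper but is a genuine simplification; the paper instead packages everything into the single element $u=\sum_{i,j,k,l}E_{ik}\otimes E_{jl}\otimes u^{ij}_{kl}\in\Mat{n}(\C)\otimes\Mat{n}(\C)\otimes\mathcal{A}$ and shows directly that $u$ is unitary, which both yields the sharper bound $\norm{u^{ij}_{kl}}\leq 1$ (versus your $\sqrt{n}$) and feeds straight into the cancellation argument via the identity $(\Id\otimes\Comult[A])u=((\Id\otimes j_{1})u)((\Id\otimes j_{2})u)$. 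Second, for cancellation the paper is more explicit than your sketch: it sets $S=\{a\in A\mid a\boxtimes_{\zeta}1_{A}\in\Comult[A](A)(1_{A}\boxtimes_{\zeta}A)\}$, uses unitarity of $u$ to place all $u^{ij}_{kl}$ and $(u^{ij}_{kl})^{*}$ in $S$, and then proves that $S$ is closed under products of homogeneous elements via $j_{2}(A)j_{1}(y)=j_{1}(y)j_{2}(A)$ (a consequence of \eqref{eq:comm-brd_tensor}). Your final sentence ``as the $u^{ij}_{kl}$ generate $A$, this yields both equalities'' tacitly assumes this multiplicativity step; you should spell it out, since having the generators in the relevant closed span is not by itself enough without knowing the span is closed under multiplication.
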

\begin{proof}
Let~\(W=\C^{n}\) and~\(\rho\) is a unitary representation of~\(\T\) on~\(W\). Let~\(\{e_1, e_{2}, \cdots , e_{n}\}\) be the eigenbasis for~\(\rho\) such that~\(\rho_{z}(e_i)=z^{d_{i}}e_{i}\) for all~\(1\leq i\leq n\).

We identify~\(\textup{End}(W)\) with~\(\Mat{n}(\C)\) with respect to this ordered basis. Then \(\Mat{n}(\C)\) is generated by the rank one matrices~\(\{E_{ij}\}_{1\leq i,j\leq n}\), defined by~\(E_{ij}e_{k}=\delta_{j,k}e_{i}\) for all~\(1\leq k\leq n\). In fact, 
\(\rho\) induces the action~\(\Pi\) in~\eqref{eq:T-act-Mat} of~\(\T\) on~\(\Mat{n}(\C)\). 

Similarly, we identify~\(\textup{End}(\Mat{n}(\C))\cong\Mat{n}(\C)\otimes\Mat{n}(\C)^{*}\cong \Mat{n}(\C)\otimes\Mat{n}(\C)\). Subsequently,~\(\Pi\) induces a continuous action~\(\widetilde{\Pi}\) of~\(\T\) 
on~\(\textup{End}(\Mat{n}(\C))\). On the basis vectors~\(E_{ij}\otimes E_{kl}\) it acts in the following way:
\begin{equation}
 \label{eq:T-act-Mat-tens-Mat}
   \widetilde{\Pi}_{z}(E_{ij}\otimes E_{kl})\defeq z^{d_{i}-d_{j}-d_{k}+d_{l}}E_{ij}\otimes E_{kl} \quad\text{for all \(z\in\T\).}
 \end{equation}
 Suppose~\(\mathcal{A}\) is the universal unital~\Star{}algebra generated by~\(u^{ij}_{kl}\) for~\(1\leq i,j\leq n\) satisfying~\eqref{eq:cond-1}-\eqref{eq:cond-6}. Define~\(u\in\Mat{n}(\C)\otimes\Mat{n}(\C)\otimes \mathcal{A}\) by
 \begin{equation}
  \label{eq:brd_fund-rep}
   u=\sum_{i,j,k,l=1}^{n}E_{ik}\otimes E_{jl}\otimes u^{ij}_{kl}.
 \end{equation}
 Since~\(\zeta\in\T\), we can rewrite~\eqref{eq:cond-2} using the formula~\eqref{eq:cond-1}  in the following compact form:
 \begin{equation}
  \label{eq:cond-2eqv}
   \sum_{t=1}^{n}\alpha_{\zeta^{d_{t}}}(u^{it}_{ks})\cdot 
   \alpha_{\zeta^{d_{j}}}(u^{tj}_{ml}) 
  =\delta_{s,m}\cdot \alpha_{\zeta^{d_{j}}}(u^{ij}_{kl}),
  \quad\text{for all~\(1\leq i,j,k,l,m,n\leq n\).}
 \end{equation}
 Then we use conditions~\eqref{eq:cond-4}, \eqref{eq:cond-2eqv}, \eqref{eq:cond-5} and the fact~\(\sum_{k=1}^{n}E_{kk}=1_{\Mat{n}(\C)}\) to show~\(u^{*}u=1_{\Mat{n}(\C)\otimes\Mat{n}(\C)}\otimes 1_{\mathcal{A}}\):
 \begin{align*}
     & \Bigl(\sum_{i,jk,l=1}^{n}E_{ik}\otimes E_{jl}\otimes u^{ij}_{kl}\Bigr)^{*} 
         \Bigl(\sum_{p,q,r,s=1}^{n}E_{pr}\otimes E_{qs}\otimes u^{pq}_{rs}\Bigr)\\
   &=\Bigl(\sum_{i,jk,l=1}^{n}E_{ki}\otimes E_{lj}\otimes u^{ij}_{kl}{ }^{*}\Bigr) 
        \Bigl(\sum_{p,q,r,s=1}^{n}E_{pr}\otimes E_{qs}\otimes u^{pq}_{rs}\Bigr)\\    
   &=\sum_{i,j,k,l,p,q,r,s=1}^{n} \delta_{i,p}\delta_{j,q} E_{kr}\otimes E_{ls}\otimes 
        \alpha_{\zeta^{d_{i}-d_{j}}}(u^{ji}_{lk})u^{pq}_{rs}\\
   &=\sum_{i,j,k,l,r,s=1}^{n} E_{kr}\otimes E_{ls}\otimes 
   \alpha_{\zeta^{d_{i}-d_{j}}}(u^{ji}_{lk})u^{ij}_{rs}\\
  &=\sum_{i,j,k,l,r,s=1}^{n} \zeta^{d_{j}(d_{k}-d_{l}+d_{s}-d_{r})}E_{kr}\otimes E_{ls}\otimes  
       \alpha_{\zeta^{d_{i}}}(u^{ji}_{lk})\alpha_{\zeta^{d_{j}}}(u^{ij}_{rs}) \\  
  &=\sum_{j,k,l,r,s=1}^{n} \delta_{k,r}\cdot \zeta^{d_{j}(d_{k}-d_{l}+d_{s}-d_{r})}E_{kr}\otimes E_{ls}\otimes \alpha_{\zeta^{d_{j}}}(u^{jj}_{ls})\\
  &=\sum_{j,k,l,r,s=1}^{n} \delta_{k,r}\cdot \zeta^{d_{j}(d_{k}-d_{r})}E_{kr}\otimes E_{ls}\otimes u^{jj}_{ls}\\
  &=\sum_{j,k,l,s=1}^{n} E_{kk}\otimes E_{ls}\otimes u^{jj}_{ls}
    =\sum_{j,l,s=1}^{n}1_{\Mat{n}(\C)}\otimes E_{ls}\otimes u_{ls}^{jj}\\
  &=1_{\Mat{n}(\C)\otimes\Mat{n}(\C)}\otimes 1_{\mathcal{A}}.
 \end{align*}
 Similarly, we can verify that~\(uu^{*}=1_{\Mat{n}(\C)\otimes\Mat{n}(\C)}\otimes 1_{\mathcal{A}}\). So, any \(\Cst\)\nb-seminorm~\(\norm{\cdot}\) on~\(\mathcal{A}\) will satisfy~\(\norm{u^{ij}_{kl}}\leq 1\) for all~\(1\leq i,j\leq n\) and~\(A\) is the completion of~\(\mathcal{A}\) with respect to the largest~\(\Cst\)\nb-seminorm on~\(\mathcal{A}\). 

Also, it is easy to verify that for all~\(z\in\T\) the elements~\((\alpha_{z}(u^{ij}_{kl}))_{1\leq i,j,k,l\leq n}\) in~\eqref{eq:cond-1} satisfy the relations~\eqref{eq:cond-2}-\eqref{eq:cond-6}. This ensures the existence and uniqueness of the action~\(\alpha\) of~\(\T\) on~\(A\).

Now we consider the diagonal action~\(\alpha\bowtie\alpha\) of~\(\T\) on~\(A\boxtimes_{\zeta}A\): ~\((\alpha\bowtie\alpha)_{z}(u^{ij}_{rs}\boxtimes_{\zeta}u^{rs}_{kl})=\alpha_{z}(u^{ij}_{rs})\boxtimes_{\zeta}\alpha_{z}(u^{rs}_{kl})=\zeta^{d_{i}-d_{j}+d_{k}-d_{l}}(u^{ij}_{rs}\boxtimes_{\zeta}u^{rs}_{kl})\) for all~\(1\leq r,s\leq n\). This shows that~\(\Comult[A](u^{ij}_{kl})\) is also a homogeneous element of~\(A\boxtimes_{\zeta}A\) with degree~\(d_{i}-d_{j}+d_{k}-d_{l}\). Hence~\(\Comult[A]\) is~\(\T\)\nb-equivariant on the generators of~\(A\). Next, we recall the commutation relation~\eqref{eq:comm-brd_tensor} and~\eqref{eq:cond-2eqv} to verify the latter for~\(\Comult[A](u^{ij}_{kl})\):
\begin{align*}
& \sum_{t=1}^{n}(\alpha\bowtie\alpha)_{\zeta^{d_{t}}}\bigl(\Comult[A](u^{it}_{ks})\bigr)
   \cdot (\alpha\bowtie\alpha)_{\zeta^{d_{j}}}\bigl(\Comult[A](u^{tj}_{ml})\bigr)\\
&= \sum_{t,p,q,x,y=1}^{n}
     \bigl(\alpha_{\zeta^{d_{t}}}(u^{it}_{pq})\boxtimes_{\zeta}
     \alpha_{\zeta^{d_{t}}}(u^{pq}_{ks})\bigr)
     \bigl(\alpha_{\zeta^{d_{j}}}(u^{tj}_{xy})\boxtimes_{\zeta}
     \alpha_{\zeta^{d_{j}}}(u^{xy}_{ml})\bigr)\\
&= \sum_{t,p,q,x,y=1}^{n}
     \zeta^{(d_{k}-d_{p}+d_{q}-d_{s})(d_{x}-d_{t}+d_{j}-d_{y})}\cdot
     \alpha_{\zeta^{d_{t}}}(u^{it}_{pq})\alpha_{\zeta^{d_{j}}}(u^{tj}_{xy})
     \boxtimes_{\zeta} 
     \alpha_{\zeta^{d_{t}}}(u^{pq}_{ks})\alpha_{\zeta^{d_{j}}}(u^{xy}_{ml})\\
&= \sum_{p,x,y=1}^{n}  
     \zeta^{(d_{k}-d_{p}+d_{x}-d_{s})(d_{x}-d_{t}+d_{j}-d_{y})}\cdot 
     \alpha_{\zeta^{d_{j}}}(u^{ij}_{py})\boxtimes_{\zeta}
      \alpha_{\zeta^{d_{t}}}(u^{px}_{ks})\alpha_{\zeta^{d_{j}}}(u^{xy}_{ml})\\
&= \sum_{p,x,y=1}^{n}  
     \zeta^{(d_{k}-d_{p}-d_{s}+d_{m}+d_{y}-d_{l})(d_{j}-d_{y})}\cdot      
     \alpha_{\zeta^{d_{j}}}(u^{ij}_{py})\boxtimes_{\zeta}
      \alpha_{\zeta^{d_{x}}}(u^{px}_{ks})\alpha_{\zeta^{d_{y}}}(u^{xy}_{ml})\\
&= \sum_{p,y=1}^{n}  
     \zeta^{(d_{k}-d_{p}-d_{s}+d_{m}+d_{y}-d_{l})(d_{j}-d_{y})}\cdot  
     \delta_{s,m} \cdot    
     \alpha_{\zeta^{d_{j}}}(u^{ij}_{py})\boxtimes_{\zeta}
      \alpha_{\zeta^{d_{y}}}(u^{py}_{kl})\\
&= \sum_{p,y=1}^{n}  
     \zeta^{(d_{m}-d_{s})(d_{j}-d_{y})}\cdot  
     \delta_{s,m} \cdot    
     \alpha_{\zeta^{d_{j}}}(u^{ij}_{py})\boxtimes_{\zeta}
      \alpha_{\zeta^{d_{j}}}(u^{py}_{kl})
   = \delta_{s,m} \cdot (\alpha\bowtie\alpha)_{\zeta^{d_{j}}}\bigl(u^{ij}_{kl}\bigr).
\end{align*}
Similarly, we can verify that~\eqref{eq:cond-3}-\eqref{eq:cond-6} for~\(\{\Comult[A](u_{kl}^{ij})\}_{i,j,k,l,=1}^{n}\). Then, by the universal property of~\(A\), \(\Comult[A]\) extends to a unique \(\T\)\nb-equivariant unital \Star{}homomorphism \(\Comult[A]\colon A\to A\boxtimes_{\zeta}A\). The coassociativity for~\(\Comult[A]\) is a routine check. 

In order to verify the cancellation conditions~\eqref{eq:comult_cancel} for~\(\Comult[A]\), we are going to employ the argument same as in~\cite{KMRW2016}*{Section 4}. Consider the set~\(S\defeq\{a\in A\mid a\boxtimes_{\zeta}1_{A}\in \Comult[A](A)(1\boxtimes_{\zeta}A)\}\). Recall the canonical inclusions~\(j_{1},j_{2}\colon A\rightrightarrows A\boxtimes_{\zeta}A\). Then we observe that 
\begin{equation}
 \label{eq:fund_unitary}
   (\Id_{\Mat{n^{2}}(\C)}\otimes\Comult[A])u 
   = \bigl((\Id_{\Mat{n^{2}}(\C)}\otimes j_{1})u\bigr)
      \bigl((\Id_{\Mat{n^{2}}(\C)}\otimes j_{2})u\bigr).
\end{equation}
Since~\(u\) is unitary, we have~\(u^{ij}_{kl}, u^{ij}_{kl}{ }^{*}\in S\) for all~\(1\leq i,j,k,l\leq n\). By virtue of~\cite{KMRW2016}*{Proposition 3.1}, for any two homogeneous elements~\(x,y\in S\) we have
\begin{align*}
  j_{1}(xy)
  =j_{1}(x)j_{1}(y)
  \in\Comult[A](A)j_{2}(A)j_{1}(y)
  &=\Comult[A](A)j_{1}(y)j_{2}(A)\\
  &\subseteq
  \Comult[A](A)\Comult[A](A)j_{2}(A)j_{2}(A)\\
  &=\Comult[A](A)j_{2}(A).
\end{align*}
 So,~\(xy\in S\). Therefore, all the monomials in~\(u^{ij}_{kl}\) and~\(u^{ij}_{kl}{ }^{*}\) belongs to~\(S\). Then~\(S\) is dense in~\(A\). Consequently, 
 \(A\boxtimes_{\zeta}A=j_{1}(A)j_{2}(A)\subseteq \Comult[A](A)j_{2}(A)\subseteq A\boxtimes_{\zeta}A\). Similarly, we can show~\(\Comult[A](A)j_{1}(A)=A\boxtimes_{\zeta}A\) by proving that \(\{a\in A\mid j_{2}(a)\in j_{1}(A)\Comult[A](A)\}\) is dense in~\(A\).
\end{proof}

\begin{definition}
 \label{def:br_act}
 An \emph{action} of a braided compact quantum group~\(\G=(B,\beta,\Comult[B])\)  over~\(\T\) on an object~\((D,\gamma)\) of~\(\Cstcat(\T)\) is an element 
 \(\eta\in\Mor^{\T}(D,D\boxtimes_{\zeta} B)\) such that
 \begin{enumerate}
  \item \(\eta\) is a comodule structure:  
   \begin{equation} 
   (\Id_{D}\boxtimes_{\zeta} \Comult[B])\circ \eta=(\eta\boxtimes_{\zeta}\Id_{B})\circ\eta
   \end{equation}
  \item \(\eta\) satisfies the \emph{Podle\'s condition}: \(\eta(D)(1_{D}\boxtimes_{\zeta} B)=D\boxtimes_{\zeta} B\).
 \end{enumerate}  
 Also, \(\eta\) is said to \emph{preserve} a state~\(f\in\Mor^{\T}(D,\C)\) if~\((f\boxtimes_{\zeta}\Id_{A})\eta(d)=f(d)1_{B}\) for all~\(d\in D\). 

 Consider faithful covariant representations of 
 \((D,\gamma)\) and~\((B,\beta)\) on some separable Hilbert spaces 
 \(\Hils[L]_{1}\) and \(\Hils[L]_{2}\) carrying~\(\T\)\nb-actions, respectively. Then \(\eta\) defines a faithful representation~\(\eta\colon D\to D\boxtimes_{\zeta}B\subseteq\Bound(\Hils[L]_{1}\otimes\Hils[L]_{2})\). 
 
 The action~\(\eta\) is called \emph{faithful} if the norm closure of the linear span of \(\{(\omega\otimes\Id_{\Bound(\Hils[L]_{2})})\eta(d)\mid \omega\in\Bound(\Hils[L]_{1})_{*}\}\subseteq\Bound(\Hils[L]_{2})\) coincides with image of~\(B\) inside~\(\Bound(\Hils[L]_{2})\).
 \end{definition}
 If~\(\eta\) preserves a faithful state~\(f\) on~\(D\), 
 then~\(\eta\) is injective.

 \begin{proposition}
  \label{prop:action_qnt_on_Matx}
  The map~\(\eta\colon\Mat{n}(\C)\to\Mat{n}(\C)\boxtimes_{\zeta} A\) defined by
  \[
    \eta(E_{ij})=\sum_{k,l=1}^{n}E_{kl}\boxtimes_{\zeta} u^{kl}_{ij} 
    \qquad\text{for all~\(1\leq i,j\leq n\),}
  \]
  is a faithful action of~\(\Aut(\Mat{n}(\C),\Pi,\phi)\) on~\((\Mat{n}(\C),\Pi)\) and preserves~\(\phi\).
 \end{proposition}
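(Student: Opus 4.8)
The plan is to verify, in turn, that $\eta$ is a well-defined $\T$\nb-equivariant unital \Star{}homomorphism, that it is a comodule structure satisfying the Podle\'s condition, that it preserves~$\phi$, and that it is injective and faithful. For the first point, since $\Mat{n}(\C)$ is the universal $\Cst$\nb-algebra generated by the $E_{ij}$ subject to~\eqref{eq:gen_Mat}, it suffices to check that the elements $\eta(E_{ij})=\sum_{k,l=1}^{n}E_{kl}\boxtimes_{\zeta}u^{kl}_{ij}$ satisfy those relations. Expanding $\eta(E_{ij})\eta(E_{kl})$ and commuting the inner factors past each other by means of~\eqref{eq:comm-brd_tensor} gives
\[
  \eta(E_{ij})\eta(E_{kl})
  =\sum_{a,b,d=1}^{n}\zeta^{(d_{b}-d_{d})(d_{i}-d_{a}+d_{b}-d_{j})}\,E_{ad}\boxtimes_{\zeta}(u^{ab}_{ij}u^{bd}_{kl});
\]
after rewriting the powers of~$\zeta$ through~\eqref{eq:cond-1}, the inner sum over~$b$ equals $\delta_{j,k}\,u^{ad}_{il}$ exactly by relation~\eqref{eq:cond-2} in its compact form~\eqref{eq:cond-2eqv}, so $\eta(E_{ij})\eta(E_{kl})=\delta_{j,k}\,\eta(E_{il})$. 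In the same spirit, using $(x\boxtimes_{\zeta}y)^{*}=\zeta^{\textup{deg}(x)\textup{deg}(y)}(x^{*}\boxtimes_{\zeta}y^{*})$ for homogeneous $x,y$ (a consequence of~\eqref{eq:comm-brd_tensor}), the identity $\eta(E_{ij})^{*}=\eta(E_{ji})$ amounts to relation~\eqref{eq:cond-4}, while $\sum_{i}\eta(E_{ii})=\sum_{k,l}E_{kl}\boxtimes_{\zeta}\bigl(\sum_{i}u^{kl}_{ii}\bigr)=\sum_{k}E_{kk}\boxtimes_{\zeta}1_{A}=1$ by~\eqref{eq:cond-5}. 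Finally $\textup{deg}\bigl(E_{kl}\boxtimes_{\zeta}u^{kl}_{ij}\bigr)=(d_{k}-d_{l})+(d_{i}-d_{k}+d_{l}-d_{j})=d_{i}-d_{j}=\textup{deg}(E_{ij})$ by~\eqref{eq:cond-1}, so $\eta$ is homogeneous of degree zero and hence intertwines $\Pi$ with the diagonal action $\Pi\bowtie\alpha$ on $\Mat{n}(\C)\boxtimes_{\zeta}A$; by the universal property of $\Mat{n}(\C)$ this produces $\eta\in\Mor^{\T}(\Mat{n}(\C),\Mat{n}(\C)\boxtimes_{\zeta}A)$.

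The comodule identity is then immediate: by~\eqref{eq:brd_comult} both $(\Id_{\Mat{n}(\C)}\boxtimes_{\zeta}\Comult[A])\eta(E_{ij})$ and $(\eta\boxtimes_{\zeta}\Id_{A})\eta(E_{ij})$ equal $\sum_{k,l,r,s}E_{kl}\boxtimes_{\zeta}u^{kl}_{rs}\boxtimes_{\zeta}u^{rs}_{ij}$, the two iterated braided tensor products being identified through the monoidal structure of $(\Cstcat(\T),\boxtimes_{\zeta})$. For the Podle\'s condition I would exploit the unitarity of the fundamental matrix $u=\sum_{i,j,k,l}E_{ik}\otimes E_{jl}\otimes u^{ij}_{kl}$ established in Theorem~\ref{the:qnt_aut_matrx}: the relation $uu^{*}=1$ reads $\sum_{a,b}u^{kl}_{ab}(u^{pq}_{ab})^{*}=\delta_{k,p}\delta_{l,q}1_{A}$, whence
\[
  \sum_{a,b=1}^{n}\eta(E_{ab})\bigl(1_{\Mat{n}(\C)}\boxtimes_{\zeta}(u^{pq}_{ab})^{*}\bigr)
  =\sum_{k,l=1}^{n}E_{kl}\boxtimes_{\zeta}\Bigl(\sum_{a,b=1}^{n}u^{kl}_{ab}(u^{pq}_{ab})^{*}\Bigr)
  =E_{pq}\boxtimes_{\zeta}1_{A}.
\]
Multiplying on the right by $1_{\Mat{n}(\C)}\boxtimes_{\zeta}c$ for $c\in A$ then gives $E_{pq}\boxtimes_{\zeta}c=\sum_{a,b}\eta(E_{ab})\bigl(1_{\Mat{n}(\C)}\boxtimes_{\zeta}(u^{pq}_{ab})^{*}c\bigr)\in\eta(\Mat{n}(\C))\bigl(1_{\Mat{n}(\C)}\boxtimes_{\zeta}A\bigr)$; since such elements are total in $\Mat{n}(\C)\boxtimes_{\zeta}A$, the Podle\'s condition $\eta(\Mat{n}(\C))(1_{\Mat{n}(\C)}\boxtimes_{\zeta}A)=\Mat{n}(\C)\boxtimes_{\zeta}A$ follows.

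It remains to treat $\phi$, injectivity and faithfulness. Since $\phi\circ\Pi_{z}=\phi$, the trace is an arrow $\Mat{n}(\C)\to\C$ in $\Cstcat(\T)$, so $\phi\boxtimes_{\zeta}\Id_{A}\colon\Mat{n}(\C)\boxtimes_{\zeta}A\to\C\boxtimes_{\zeta}A=A$ is defined and sends $E_{kl}\boxtimes_{\zeta}a\mapsto\phi(E_{kl})\,a$; hence $(\phi\boxtimes_{\zeta}\Id_{A})\eta(E_{ij})=\tfrac1n\sum_{k}u^{kk}_{ij}=\tfrac1n\delta_{i,j}1_{A}=\phi(E_{ij})1_{A}$ by relation~\eqref{eq:cond-6}, so $\eta$ preserves~$\phi$, and as $\phi$ is faithful the remark following Definition~\ref{def:br_act} gives that $\eta$ is injective. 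For faithfulness, fix faithful covariant representations of $(\Mat{n}(\C),\Pi)$ and $(A,\alpha)$ on Hilbert spaces $\Hils[L]_{1},\Hils[L]_{2}$ carrying $\T$\nb-actions as in Definition~\ref{def:br_act}; since the braiding~\eqref{eq:braiddef} is diagonal in the eigenbases, a suitable slice $(\omega\otimes\Id_{\Bound(\Hils[L]_{2})})\eta(E_{ij})$ with $\omega\in\Bound(\Hils[L]_{1})_{*}$ recovers $u^{kl}_{ij}$ up to a nonzero scalar, and as the $u^{kl}_{ij}$ generate~$A$ it follows that $\eta$ is faithful in the sense of Definition~\ref{def:br_act}. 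I expect the only step requiring genuine care to be the well-definedness, namely tracking correctly the powers of~$\zeta$ contributed by~\eqref{eq:comm-brd_tensor} and by the braided involution and matching them against~\eqref{eq:cond-2}--\eqref{eq:cond-6}; this is precisely the bookkeeping behind those relations and behind the proof of Theorem~\ref{the:qnt_aut_matrx}, so everything else reduces to~\eqref{eq:brd_comult}, the unitarity of~$u$, relation~\eqref{eq:cond-6}, and the faithfulness of~$\phi$.
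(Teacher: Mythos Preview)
Your proposal is correct and follows essentially the same route as the paper: you verify well-definedness of~$\eta$ via~\eqref{eq:comm-brd_tensor} and~\eqref{eq:cond-2eqv}--\eqref{eq:cond-6}, read off the comodule identity from~\eqref{eq:brd_comult}, deduce Podle\'s from the unitarity of~$u$, get $\phi$-preservation from~\eqref{eq:cond-6}, and handle faithfulness by slicing against vector functionals in a covariant representation. The only place where you are noticeably sketchier than the paper is the faithfulness step: the paper writes down the explicit functional $\omega=\sum_{y}f(y)\,\omega_{E_{ay}\lambda^{1}_{r},\,E_{by}\lambda^{1}_{r}}$ with $f(y)=\zeta^{-\textup{deg}(E_{by})\textup{deg}(u^{ab}_{ij})}$ and checks that $(\omega\otimes\Id)\eta(E_{ij})$ acts as a nonzero scalar multiple of $u^{ab}_{ij}$, whereas you assert that ``a suitable slice recovers $u^{kl}_{ij}$ up to a nonzero scalar''; your intuition is right, but the paper's explicit choice is exactly the bookkeeping you anticipate needing.
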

 \begin{proof}
 It is easy to verify that \(\eta\) is  \(\T\)\nb-equivariant, \(\eta(E_{ij}^{*})=\eta(E_{ij})^{*}\) and~\(\sum_{i=1}^{n}\eta(E_{ii})=1_{\Mat{n}(\C)}\boxtimes_{\zeta}1_{A}\) for all~\(1\leq i,j\leq n\). Also, using  the relation~\eqref{eq:cond-6} and the fact~\(\phi\) is~\(\T\)\nb-equivariant, we easily verify that~\(\eta\) preserves~\(\phi\).

To verify that~\(\eta\) is multiplicative, first we recall the commutation relation~\eqref{eq:comm-brd_tensor} and the condition~\eqref{eq:cond-2eqv}. Then for any~\(1\leq i,j,k,l\leq n\) we compute
  \begin{align*}
   &\eta(E_{ij})\eta(E_{kl})= \sum_{r,s,x,y=1}^{n} 
    	(E_{rs}\boxtimes_{\zeta}u^{rs}_{ij})
    	(E_{xy}\boxtimes_{\zeta}u^{xy}_{kl})\\
    &=\sum_{r,s,y=1}^{n}\zeta^{(d_{s}-d_{y})(d_{i}-d_{r}+d_{s}-d_{j})} 
    (E_{ry}\boxtimes_{\zeta}u^{rs}_{ij}u^{sy}_{kl})\\
    &=\sum_{r,s,y=1}^{n}\zeta^{d_{y}(d_{r}-d_{y}-d_{i}+d_{j}-d_{k}+d_{l})}E_{ry}\boxtimes_{\zeta}
     \bigl(\alpha_{\zeta^{d_{s}}}(u^{rs}_{ij})(\alpha_{\zeta^{d_{y}}}(u^{sy}_{kl})\bigr)\\
   &=\delta_{j,k}\sum_{r,y=1}^{n}\zeta^{d_{y}(d_{r}-d_{y}-d_{i}+d_{l})}E_{ry}\boxtimes_{\zeta} \alpha_{\zeta^{d_{y}}}(u^{ry}_{il})
   =\delta_{j,k}\cdot \eta(E_{il}).
  \end{align*}
  Thus~\(\eta\colon\Mat{n}(\C)\to\Mat{n}(\C)\boxtimes_{\zeta}A\) 
  is a well\nb-defined unital \Star{}homomorphism. Verification of Definition~\ref{def:br_act}~(1) is a routine check. Finally, consider the set~\(S\defeq\{ M\in\Mat{n}(\C)\mid M\boxtimes_{\zeta}1_{A}\in\eta(\Mat{n}(\C))(1_{\Mat{n}(\C)}\boxtimes_{\zeta} A)\}\). Since~\(u\) is unitary, in particular, we have~\(\sum_{r,s=1}^{n}u^{ij}_{rs}u^{rs}_{kl}{ }^{*}=\delta_{i,k}\cdot \delta_{j,s}\cdot 1_{A}\) for all~\(1\leq i,j,k,l\leq n\). This condition implies 
  \begin{align*}
     \sum_{r,s=1}^{n}\eta(E_{rs})(1\boxtimes_{\zeta}u^{rs}_{kl}{ }^{*})
   =\sum_{i,j,r,s=1}^{n}E_{ij}\boxtimes_{\zeta}(u^{ij}_{rs}u^{rs}_{kl}{ }^{*})
   &=\sum_{i,j=1}^{n}\delta_{i,k}\cdot\delta_{j,l}\cdot(E_{ij}\boxtimes_{\zeta} 1_{A})\\
   &=E_{kl}\boxtimes_{\zeta}1_{A},
  \end{align*}
  for all~\(1\leq k,l\leq n\). So~\(E_{kl},E_{kl}^{*}\in S\). Then using arguments similar to the proof of the first equality in~\eqref{eq:comult_cancel} as in Theorem~\ref{the:qnt_aut_matrx} we can verify Definition~\ref{def:br_act}~(2). 
  
   Let us consider faithful covariant representations of~\((\Mat{n}(\C),\Pi)\) and 
  \((A,\alpha)\) on some Hilbert spaces~\(\Hils[L]_{1}\) and~\(\Hils[L]_{2}\), respectively. 
  Then we fix an orthonormal basis~\(\{\lambda_{r}^{i}\}_{r\in\N}\) consisting of eigenvectors 
  for the representation of~\(\T\) on~\(\Hils[L]_{i}\), for~\(i=1,2\). We fix~\(i,j\in 
  \{1,\cdots ,n\}\). Then we use 
  \eqref{eq:braiddef}-\eqref{eq:cov-rep-star} and compute
  \[
    \eta(E_{ij})(\lambda_{r}^{1}\otimes\lambda_{s}^{2})
    =\sum_{k,l=1}^{n}
    \zeta^{l^{1}_{r}\textup{deg}(u^{kl}_{ij})} (E_{kl}\lambda_{r}^{1}
    \otimes u^{kl}_{ij}\lambda^{2}_{s}),
    \qquad\text{where~\(r,s\in\N\).} 
  \]
  Next we fix~\(a,b\in\{1,\cdots ,n\}\). Define~\(f\colon\{1,\cdots ,n\}\to\C\) by 
  \(f(y)=\zeta^{-\textup{deg}(E_{by})\textup{deg}(u^{ab}_{ij})}\). For any 
  \(\xi_{1},\xi_{2}\in\Hils[L]_{1}\), the vector functionals \(\omega_{\xi_{1},\xi_{2}}\) are defined by~\(\omega_{\xi_{1},\xi_{2}}(T)=\langle \xi_{1},T(\xi_{2})\rangle\) for all~\(T\in\Bound(\Hils[L]_{1})\). For a fixed~\(r\in\N\), define \(\omega\defeq \sum_{y=1}^{n}f(y)\omega_{E_{ay}\lambda_{r}^{1},E_{by}
  \lambda_{r}^{1}}\). A simple computation using the last equation gives
  \[
   \bigl((\omega\otimes\Id_{\Bound(\Hils[L]_{2})})\eta(E_{ij})\bigr)\lambda_{s}^{2}\\
   =\left\langle \sum_{y=1}^{n} \lambda_{r}^{1},E_{yy}\lambda_{r}^{1}\right\rangle
  \zeta^{l_{r}^{1}\textup{deg}(u^{ab}_{ij})}u^{ab}_{ij}\lambda_{s}^{2}
  =\zeta^{l_{r}^{1}\textup{deg}(u^{ab}_{ij})}u^{ab}_{ij}\lambda_{s}^{2}.\qedhere
  \]
\end{proof}

 \section{The bosonisation} 
  \label{sec:Boson}
  The \emph{bosonisation} of a braided compact quantum group over~\(\T\) 
  is an ordinary compact quantum group together with an 
  idempotent quantum group homomorphism (``projection'') with image~\(\Bialg{\Cont(\T)}\). Conversely, every compact quantum group with projection having~\(\Bialg{\Cont(\T)}\) as image is the bosonisation of a braided compact quantum group over~\(\T\)~\cite{MRW2016}*{Theorem 6.4}. 
  We construct the bosonisation of~\(\Aut(\Mat{n}(\C),\Pi,\phi)\) in the following theorem.
  \begin{theorem}
   \label{the:Boson}
   Suppose~\(\Bialg{C}\) is the bosonisation of 
   \(\Aut(\Mat{n}(\C),\Pi,\phi)\). Then~\(C\) is isomorphic to the universal~\(\Cst\)\nb-algebra generated by the elements~\(z\) 
   and~\(u_{ij}^{kl}\) for~\(1\leq i,j,k,l\leq n\) subject to the relations~\(z^{*}z=zz^{*}=1\), \eqref{eq:cond-2}-\eqref{eq:cond-6} and~\(zu^{ij}_{kl}z^{*}=\zeta^{d_{i}-d_{k}+d_{l}-d_{j}}u^{ij}_{kl}\). The comultiplication map~\(\Comult[C]\) is given by 
   \begin{equation}
    \label{eq:comult-bos}
    \Comult[C](z)=z\otimes z, 
    \qquad 
    \Comult[C](u^{ij}_{kl})=\sum_{r,s=1}^{n}u^{ij}_{rs}\otimes z^{d_{r}-d_{i}+d_{j}-d_{s}}u^{rs}_{kl}.
  \end{equation}
  In particular, \(C\cong A\rtimes\Z\), where~\(\alpha_{\zeta^{-1}}\) generates the action of~\(\Z\) on~\(A\).
  We denote~\(\Bialg{C}\) by~\(\textup{Bos}(\Aut(\Mat{n}(\C),\Pi,\phi))\).
  \end{theorem}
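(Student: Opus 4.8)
The plan is to specialise the general bosonisation construction of~\cite{MRW2016}*{Section~6} — the \(\Cst\)\nb-algebraic incarnation of the Radford--Majid bosonisation recalled in the introduction — to the braided compact quantum group \(\G=\Aut(\Mat{n}(\C),\Pi,\phi)=(A,\alpha,\Comult[A])\) produced in Theorem~\ref{the:qnt_aut_matrx}, and then to read off its generators, relations and comultiplication explicitly. Recall that this construction realises the underlying \(\Cst\)\nb-algebra of the bosonisation as the braided tensor product \(C=\Cont(\T)\boxtimes_{\zeta}A\), where \(\Cont(\T)\) carries the action of~\(\T\) for which the generating unitary~\(z\) is homogeneous of degree~\(1\), and realises its comultiplication as \(\Comult[C]=\Psi^{A,A}\circ(\Id_{\Cont(\T)}\boxtimes_{\zeta}\Comult[A])\), with \(\Psi^{A,A}\) the injective \Star{}homomorphism of~\eqref{eq:Psi}. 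The whole proof then amounts to unwinding these two data using~\eqref{eq:cond-1}, the braided commutation relation~\eqref{eq:comm-brd_tensor} and~\eqref{eq:brd_comult}.

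First I would describe the \(\Cst\)\nb-algebra \(\Cont(\T)\boxtimes_{\zeta}A\). It is generated by (the images of) the unitary \(z\in\Cont(\T)\) and the generators \(u^{ij}_{kl}\) of~\(A\); inside it \(z\) remains unitary, the \(u^{ij}_{kl}\) still satisfy~\eqref{eq:cond-2}-\eqref{eq:cond-6} (the defining relations of~\(A\), which persist because \(A\hookrightarrow\Cont(\T)\boxtimes_{\zeta}A\)), and the only new relations are the commutation rules between the two tensor legs. Since \(\textup{deg}(z)=1\) and \(\textup{deg}(u^{ij}_{kl})=d_{k}-d_{i}+d_{j}-d_{l}\) by~\eqref{eq:cond-1}, formula~\eqref{eq:comm-brd_tensor} with \(D_{1}=\Cont(\T)\) and \(D_{2}=A\) gives \(z\,u^{ij}_{kl}=\zeta^{-\textup{deg}(z)\textup{deg}(u^{ij}_{kl})}u^{ij}_{kl}\,z\), that is, \(z\,u^{ij}_{kl}\,z^{*}=\zeta^{\,d_{i}-d_{k}+d_{l}-d_{j}}u^{ij}_{kl}=\alpha_{\zeta^{-1}}(u^{ij}_{kl})\). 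Hence \(C\) is exactly the universal \(\Cst\)\nb-algebra with the generators and relations listed in the statement; equivalently, it is the universal \(\Cst\)\nb-algebra containing~\(A\) together with a unitary implementing \(\alpha_{\zeta^{-1}}\), namely the crossed product \(A\rtimes_{\alpha_{\zeta^{-1}}}\Z\) (full \(=\) reduced, as \(\Z\) is amenable). This also recovers the isomorphism \(\Cont(\T)\boxtimes_{\zeta}D\cong D\rtimes_{\gamma_{\zeta^{-1}}}\Z\) of the introduction, specialised to \((D,\gamma)=(A,\alpha)\).

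Next I would compute \(\Comult[C]\) on generators. On \(z=j_{1}(z)\) the map \(\Id_{\Cont(\T)}\boxtimes_{\zeta}\Comult[A]\) acts trivially, and~\eqref{eq:Psi} gives \(\Comult[C](z)=\Psi^{A,A}(j_{1}(z))=(j_{1}\otimes j_{1})\Comult[\Cont(\T)](z)=z\otimes z\). On \(u^{ij}_{kl}\), relation~\eqref{eq:brd_comult} yields \((\Id_{\Cont(\T)}\boxtimes_{\zeta}\Comult[A])(u^{ij}_{kl})=\sum_{r,s}j_{2}(u^{ij}_{rs})\,j_{3}(u^{rs}_{kl})\) inside \(\Cont(\T)\boxtimes_{\zeta}A\boxtimes_{\zeta}A\); applying \(\Psi^{A,A}\), using \(\Psi^{A,A}(j_{2}(a))=(j_{2}\otimes j_{1})\alpha(a)=j_{2}(a)\otimes z^{\textup{deg}(a)}\) and \(\Psi^{A,A}(j_{3}(b))=1\otimes j_{2}(b)\) from~\eqref{eq:Psi}, and reading off \(\textup{deg}(u^{ij}_{rs})=d_{r}-d_{i}+d_{j}-d_{s}\) from~\eqref{eq:cond-1}, one obtains precisely the formula~\eqref{eq:comult-bos}. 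As \((C,\Comult[C])\) is, by~\cite{MRW2016}*{Theorem~6.4}, already a compact quantum group — indeed the bosonisation of~\(\G\), with the projection onto \(\Bialg{\Cont(\T)}\) being \(z\mapsto z\) and \(u^{ij}_{kl}\mapsto\delta_{i,k}\delta_{j,l}\) — nothing further needs to be verified.

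The only point calling for care, rather than a genuine obstacle, is the degree bookkeeping: one must keep the convention \(\textup{deg}(z)=1\) consistent between the braided commutation relation~\eqref{eq:comm-brd_tensor}, which produces the \(z\,u^{ij}_{kl}\,z^{*}\) relation, and the coaction leg of~\(\Psi^{A,A}\), which inserts \(z^{\textup{deg}(u^{ij}_{rs})}\) into the second tensor leg of~\(\Comult[C]\), so that the two explicit formulas of the statement come out with matching exponents. Everything else is direct substitution into~\eqref{eq:Psi}, \eqref{eq:comm-brd_tensor} and~\eqref{eq:brd_comult}, together with the identification of \(\Cont(\T)\boxtimes_{\zeta}A\) with a crossed product by the amenable group~\(\Z\).
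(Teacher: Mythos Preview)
Your proposal is correct and follows essentially the same route as the paper: both specialise the bosonisation construction of~\cite{MRW2016}*{Section~6}, identify \(C=\Cont(\T)\boxtimes_{\zeta}A\), derive the commutation relation \(zu^{ij}_{kl}z^{*}=\alpha_{\zeta^{-1}}(u^{ij}_{kl})\) and hence the crossed-product description, and then compute \(\Comult[C]\) by pushing~\eqref{eq:brd_comult} through~\(\Psi^{A,A}\) using~\eqref{eq:Psi}. The only difference is that the paper carries out the commutation and crossed-product identifications explicitly on a concrete Hilbert space via the braiding unitaries, whereas you invoke the established formula~\eqref{eq:comm-brd_tensor} and amenability of~\(\Z\) directly --- a presentational rather than substantive distinction.
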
  
  \begin{proof}
  Let~\(\Hils[L]\) be a separable Hilbert space with a continuous representation~\(\pi\) 
 of~\(\T\). Suppose~\(\{\lambda_{m}\}_{m\in\N}\) is an orthonormal basis of~\(\Hils[L]\) consisting of the eigenvectors for the~\(\T\)\nb-action, that is, \(\pi_{z}(\lambda_{m})=z^{l_{m}}\lambda_{m}\) for some~\(l_{m}\in\N\). 
The braiding unitary~\(\Braiding{\Hils[L]}{L^{2}(\T)}\colon\Hils[L]\otimes L^{2}(\T)\to L^{2}(\T)\otimes\Hils[L]\) and its inverse \(\Dualbraiding{L^{2}(\T)}{\Hils[L]}\colon L^{2}(\T)\otimes\Hils[L]\to\Hils[L]\otimes L^{2}(\T)\) are similar to~\eqref{eq:braiddef}. On the 
basis vectors they are defined by
 \begin{equation*} 
  \Braiding{\Hils[L]}{L^{2}(\T)}(\lambda_{m}\otimes z^{r})=\zeta^{rl_{m}} (z^{r}\otimes\lambda_{m}), 
  \qquad
  \Dualbraiding{L^{2}(\T)}{\Hils[L]}(z^{r}\otimes \lambda_{m})=\zeta^{-rl_{m}} (\lambda_{m}\otimes z^{r}). 
 \end{equation*}
 These are the same braiding unitaries in~\cite{MRW2016}*{Section 6}. Here~\(A\) is viewed as a~\(\T\)\nb-Yetter\nb-Drinfeld \(\Cst\)\nb-algebra with respect to the \(\T\)\nb-action~\(\alpha\) and the~\(\Z\)\nb-action 
 generated by~\(\alpha_{\zeta^{-1}}\). The~\(\Z\)\nb-action is the composition of~\(\alpha\) with the group homomorphism induced by the~\(\Rmat\)\nb-matrix~\eqref{eq:Rmat}, \(\Z\to\T\), \(n\to\zeta^{-n}\) as observed in~\cite{RR2021}*{Appendix A}. 
 
 Consider a faithful \(\T\)\nb-equivariant representation~\(A\hookrightarrow\Bound(\Hils[L])\). Then~\eqref{eq:cov-rep} and~\eqref{eq:cov-rep-star} give
 \[
   \pi_{z}(u^{ij}_{kl}\lambda_{m})
     =z^{d_{k}-d_{i}+d_{j}-d_{l}+l_{m}}u^{ij}_{kl}\lambda_{m},
  \qquad
   \pi_{z}(u^{ij}_{kl}{ }^{*}\lambda_{m})
  =z^{d_{i}-d_{k}+d_{l}-d_{j}+l_{m}}u^{ij}_{kl}{ }^{*}\lambda_{m}.  
 \]
 
 Therefore, the underlying \(\Cst\)\nb-algebra~\(C\) is the braided tensor product:
 \[
   C=\Cont(\T)\boxtimes_{\zeta}A=(\Cont(\T)\otimes 1_{\Bound(\Hils[L])})\cdot \Braiding{\Hils[L]}{L^{2}(\T)}(A\otimes 1_{\Bound(L^{2}(\T))})\Dualbraiding{L^{2}(\T)}{\Hils[L]}\subseteq\Bound(L^{2}(\T)\otimes\Hils[L]).
 \]
 In fact, \(C\) is generated by~\(z\boxtimes_{\zeta} 1_{A}=z\otimes 1\) and \(1_{\Cont(\T)}\boxtimes_{\zeta}u^{ij}_{kl}=
 \Braiding{\Hils[L]}{L^{2}(\T)}\cdot (u^{ij}_{kl}\otimes 1_{\Bound(L^{2}(\T))})\cdot \Dualbraiding{L^{2}(\T)}{\Hils[L]}\). The latter 
 operator acts on~\(L^{2}(\T)\otimes\Hils[L]\) by 
 \[
  \Braiding{\Hils[L]}{L^{2}(\T)}\cdot (u^{ij}_{kl}\otimes 1_{\Bound(L^{2}(\T))})\cdot \Dualbraiding{L^{2}(\T)}{\Hils[L]}(z^{r}\otimes \lambda_{m})
  =\zeta^{r(d_{k}-d_{i}+d_{l}-d_{j})}(z^{r}\otimes u^{ij}_{kl}\lambda_{m}),
 \]
 and consequently we get the commutation relation~\ref{eq:comm-brd_tensor}:
 \[
   (z\boxtimes_{\zeta}1_{A})(1_{\Cont(\T)}\boxtimes_{\zeta} u^{ij}_{kl})(z^{*}\boxtimes_{\zeta}1_{A}) 
   =\zeta^{-(d_{k}-d_{i}+d_{j}-d_{l})} (1_{\Cont(\T)}\boxtimes_{\zeta}u^{ij}_{kl})
   =1_{\Cont(\T)}\boxtimes_{\zeta} \alpha_{\zeta^{-1}}(u^{ij}_{kl}).
 \]
 The above equation shows that the unitaries~\(z^{r}\boxtimes_{\zeta}1_{A}\) and the faithful representation~\(A\hookrightarrow \Cont(\T)\boxtimes_{\zeta}A\subseteq\Bound(L^{2}(\T)\otimes\Hils[L])\) form a covariant representation for the~\(\Z\)\nb-action on~\(A\) generated by~\(\alpha_{\zeta^{-1}}\). Moreover, it is unitarily equivalent to the regular representation that defines the reduced crossed product for the~\(\Z\)\nb-action on~\(A\) generated by~\(\alpha_{\zeta^{-1}}\). So, \(C=\Cont(\T)\boxtimes_{\zeta}A\cong A\rtimes\Z\). Then, \(C\) is the universal \(\Cst\)\nb-algebra generated by a unitary~\(z\) and~\(\{u^{ij}_{kl}\}_{1\leq i,j,k,l\leq n}\) subject to the relations~\eqref{eq:cond-2}-\eqref{eq:cond-6} and the commutation relation
 \[
   zu^{ij}_{kl}=\zeta^{d_{i}-d_{k}+d_{l}-d_{j}}u^{ij}_{kl}z.
 \]
The comultiplication map~\(\Comult[C]\colon C\to C\otimes C\) is defined by~\(\Comult[C]
=\Psi^{A,A}\circ (\Id_{\Cont(\T)}\boxtimes_{\zeta}\Comult[A])\), where~\(\Psi^{A,A}\colon \Cont(\T)\boxtimes_{\zeta} A\boxtimes_{\zeta} A\to (\Cont(\T)\boxtimes_{\zeta} A)\otimes (\Cont(\T)\boxtimes_{\zeta} A)=C\otimes C\) is given by~\eqref{eq:Psi} for~\(B=D=A\) and~\(\beta=\gamma=\alpha\).
Using~\(\Comult[\Cont(\T)](z)=z\otimes z\) and \eqref{eq:brd_comult}, we compute
\begin{align*}
 \Comult[C](j_{1}(z)) &=(j_{1}\otimes j_{1})\Comult[\Cont(\T)](z)=j_{1}(z)\otimes j_{1}(z),\\
 \Comult[C](j_{2}(u^{ij}_{kl})) 
 &= \Psi^{A,A}\Bigl(\sum_{r,s=1}^{n}j_{2}(u^{ij}_{rs})j_{3}(u^{rs}_{kl}) \Bigr)\\
 &=\sum_{r,s=1}^{n}\bigl(j_{2}(u^{ij}_{rs})\otimes j_{1}(z^{d_{r}-d_{i}+d_{j}-d_{s}})\bigr)\cdot \bigl(1_{C}\otimes j_{2}(u^{rs}_{kl})\bigr)\\
 &=\sum_{r,s=1}^{n} j_{2}(u^{ij}_{rs})\otimes j_{1}(z^{d_{r}-d_{i}+d_{j}-d_{s}})j_{2}(u^{rs}_{kl}).
\end{align*}
After dropping the inclusion maps~\(j_{1}\) and~\(j_{2}\) from our notation above we get the formulas for~\(\Comult[C]\).
 \end{proof}
 
\section{The universal property of~\(\textup{Aut}(\textup{M}_{n}(\C),\Pi,\phi)\) and its bosonisation}
 \label{sec:univ-prop}
 Let~\((D,\gamma)\) be an object of the category~\(\Cstcat(\T)\). Let~\(\G=(B,\beta,\Comult[B])\) be a braided compact quantum group over~\(\T\) and let \(\Bialg{C'}\) be its bosonisation. Suppose \(\eta\in\Mor^{\T}(D,D\boxtimes_{\zeta}B)\) is an action of~\(\G\) on~\(D\).  Then~\(C'=\Cont(\T)\boxtimes_{\zeta}B\) and the comultiplication map~\(\Comult[C']=\Psi^{B,B}\circ(\Id_{\Cont(\T)}\boxtimes_{\zeta}\Comult[B])\), where~\(\Psi^{B,B}\) is given by~\eqref{eq:Psi} for~\((D,\gamma)=(B,\beta)\).

\begin{proposition}
 \label{prop:bos_act_lift}
 Define~\(\widetilde{\eta}\colon \Cont(\T)\boxtimes_{\zeta}D\to(\Cont(\T)\boxtimes_{\zeta}D)\otimes (\Cont(\T)\boxtimes_{\zeta}B)=(\Cont(\T)\boxtimes_{\zeta}D)\otimes C'\) by~\(\widetilde{\eta}=\Psi^{D,B}\circ(\Id_{\Cont(\T)}\boxtimes_{\zeta}\eta)\), where~\(\Psi^{D,B}\) is given by\textup{~\eqref{eq:Psi}}. Then~\(\widetilde{\eta}\) is an action of the ordinary compact quantum group \(\Bialg{C'}\) 
 on~\(\Cont(\T)\boxtimes_{\zeta}D\). 
\end{proposition}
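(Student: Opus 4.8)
The plan is to verify the three defining conditions for \(\widetilde\eta\) to be an action of the ordinary compact quantum group \(\Bialg{C'}\) on the unital \(\Cst\)\nb-algebra \(\Cont(\T)\boxtimes_\zeta D\): that \(\widetilde\eta\) is a unital \Star{}homomorphism, that it satisfies the action equation for \(\Comult[C']\), and that it satisfies the Podle\'s condition. The first is immediate: by functoriality of \(\boxtimes_\zeta\) on \(\Cstcat(\T)\) applied to the \(\T\)\nb-equivariant morphism \(\eta\), combined with the associativity isomorphism \(\Cont(\T)\boxtimes_\zeta(D\boxtimes_\zeta B)\cong\Cont(\T)\boxtimes_\zeta D\boxtimes_\zeta B\), the map \(\Id_{\Cont(\T)}\boxtimes_\zeta\eta\) is a unital \Star{}homomorphism, and \(\Psi^{D,B}\) is the injective unital \Star{}homomorphism of \cite{MRW2016}*{Proposition~3.6}, so \(\widetilde\eta\) is their composite. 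Writing \(j_1,j_2\) for the canonical embeddings of \(\Cont(\T)\) and \(D\) (resp.\ \(B\)) into the relevant braided tensor products, unwinding \eqref{eq:Psi} on generators yields, for homogeneous \(d\) with \(\eta(d)=\sum d_{(0)}\boxtimes_\zeta b_{(1)}\) (the \(d_{(0)}\) homogeneous),
\begin{equation}
 \label{eq:widetildeeta_on_gens}
 \widetilde\eta(j_1(z))=j_1(z)\otimes j_1(z),
 \qquad
 \widetilde\eta(j_2(d))=\sum j_2(d_{(0)})\otimes j_1(z^{\textup{deg}(d_{(0)})})\,j_2(b_{(1)}),
\end{equation}
which are of the same shape as the formulas \(\Comult[C'](j_1(z))=j_1(z)\otimes j_1(z)\) and \(\Comult[C'](j_2(b))=\sum j_2(b_{(1)})\otimes j_1(z^{\textup{deg}(b_{(1)})})\,j_2(b_{(2)})\) (for \(\Comult[B](b)=\sum b_{(1)}\boxtimes_\zeta b_{(2)}\)) that one reads off from \(\Comult[C']=\Psi^{B,B}\circ(\Id_{\Cont(\T)}\boxtimes_\zeta\Comult[B])\) as in Theorem~\ref{the:Boson} (compare \eqref{eq:comult-bos}).

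For the action equation, as \(\Cont(\T)\boxtimes_\zeta D\) is generated by \(j_1(\Cont(\T))\) and \(j_2(D)\) and both sides of \((\Id\otimes\Comult[C'])\circ\widetilde\eta=(\widetilde\eta\otimes\Id)\circ\widetilde\eta\) are unital \Star{}homomorphisms, it suffices to test on \(j_1(z)\) and on homogeneous \(j_2(d)\). On \(j_1(z)\) it is immediate from \eqref{eq:widetildeeta_on_gens} and \(\Comult[C'](j_1(z))=j_1(z)\otimes j_1(z)\). On \(j_2(d)\), substituting \eqref{eq:widetildeeta_on_gens} and the companion formula for \(\Comult[C']\) and then cancelling the powers of \(\zeta\) produced by the commutation relations inside \(C'\) and by the braidings built into \(\Psi^{D,B}\) and \(\Psi^{B,B}\) reduces the identity to the braided comodule equation \((\Id_D\boxtimes_\zeta\Comult[B])\circ\eta=(\eta\boxtimes_\zeta\Id_B)\circ\eta\) together with the coassociativity of \(\Comult[B]\). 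The cleanest organisation, which sidesteps the explicit phase accounting, is to observe that \(\widetilde\eta\) and \(\Comult[C']\) are assembled from the \emph{same} family of maps \(\Psi\); the action equation for \(\widetilde\eta\) then follows by essentially the same diagram chase that establishes the coassociativity of \(\Comult[C']\) in \cite{MRW2016}*{Section~6}, with the braided comultiplication of the first tensor leg there replaced by the braided action \(\eta\) --- a replacement that is legitimate because that chase uses only that the map in question is a \(\T\)\nb-equivariant unital \Star{}homomorphism satisfying the appropriate comodule equation. This phase bookkeeping, equivalently the verification that the cited argument of \cite{MRW2016} survives the substitutions \(B\to D\) and \(\Comult[B]\to\eta\), is the one genuinely delicate step.

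For the Podle\'s condition let \(\mathcal{M}\) be the closed linear span of \(\widetilde\eta(\Cont(\T)\boxtimes_\zeta D)(1\otimes C')\); it remains to prove \(\mathcal{M}=(\Cont(\T)\boxtimes_\zeta D)\otimes C'\), and only density needs checking. Note \(\mathcal{M}\) is a left \(\widetilde\eta(\Cont(\T)\boxtimes_\zeta D)\)\nb-module and a right \((1\otimes C')\)\nb-module. From \eqref{eq:widetildeeta_on_gens} one has \(\widetilde\eta(j_1(z^k))(1\otimes j_1(z^{-k}))=j_1(z^k)\otimes 1\), so \(j_1(\Cont(\T))\otimes 1\subseteq\mathcal{M}\). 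Using the braided Podle\'s condition \(\eta(D)(1_D\boxtimes_\zeta B)=D\boxtimes_\zeta B\), the Podle\'s condition \(\gamma(D)(1_D\otimes\Cont(\T))=D\otimes\Cont(\T)\) for the \(\T\)\nb-action (which enters through \(\Psi^{D,B}(j_2(D))\)), and the cancellation \(\Comult[\Cont(\T)](\Cont(\T))(1\otimes\Cont(\T))=\Cont(\T)\otimes\Cont(\T)\), one shows \(j_2(D)\otimes 1\subseteq\mathcal{M}\); applying \(\widetilde\eta(j_1(\Cont(\T)))\) to \((j_2(D)\otimes 1)(1\otimes j_1(\Cont(\T)))\) and invoking the same cancellation for \(\Comult[\Cont(\T)]\) then produces \((\Cont(\T)\boxtimes_\zeta D)\otimes 1\subseteq\mathcal{M}\), and right\nb-multiplying by \(1\otimes C'\) gives the claim. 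Here the only subtlety is that the two tensor legs must be filled in the correct order --- the second before the first, because the braiding couples them --- exactly as in the cancellation verifications in the proofs of Theorem~\ref{the:qnt_aut_matrx} and Proposition~\ref{prop:action_qnt_on_Matx} and in \cite{MRW2016}*{Section~6}.
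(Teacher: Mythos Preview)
Your proposal is correct and follows essentially the same strategy as the paper: a unital \Star{}homomorphism by construction, the action equation via the compatibilities among the \(\Psi\)-maps, and Podle\'s via the braided Podle\'s condition for \(\eta\), the ordinary Podle\'s condition for \(\gamma\), and the cancellation law for \(\Comult[\Cont(\T)]\). The main difference is one of explicitness and organisation: where you defer the action equation to ``the same diagram chase as in \cite{MRW2016}*{Section~6} with \(B\to D\) and \(\Comult[B]\to\eta\)'', the paper writes down the two auxiliary maps \(\Psi^{D,B\boxtimes_\zeta B}\) and \(\Psi^{D\boxtimes_\zeta B,B}\), records the identities \((\Id\otimes(\Id\boxtimes_\zeta\Comult[B]))\circ\Psi^{D,B}=\Psi^{D,B\boxtimes_\zeta B}\circ(\Id\boxtimes_\zeta\Comult[B])\) and \(((\Id\boxtimes_\zeta\eta)\otimes\Id)\circ\Psi^{D,B}=\Psi^{D\boxtimes_\zeta B,B}\circ(\Id\boxtimes_\zeta\eta\boxtimes_\zeta\Id)\), and checks \((\Id\otimes\Psi^{B,B})\circ\Psi^{D,B\boxtimes_\zeta B}=(\Psi^{D,B}\otimes\Id)\circ\Psi^{D\boxtimes_\zeta B,B}\) on the four canonical embeddings---this is exactly the bookkeeping you flag as delicate, carried out rather than cited. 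For Podle\'s, the paper runs a single chain \(\widetilde\eta(j_1(\Cont(\T))j_2(D))(1\otimes C')=\cdots=(\Cont(\T)\boxtimes_\zeta D)\otimes C'\) using the three ingredients in one pass, whereas your stepwise route (first \(j_1(\Cont(\T))\otimes 1\), then \(j_2(D)\otimes 1\), then combine) reaches the same conclusion but with slightly more manoeuvring; in fact your intermediate claim \(j_2(D)\otimes 1\subseteq\mathcal{M}\) already follows from just the first two of the three ingredients, with cancellation for \(\Comult[\Cont(\T)]\) only entering at the final combination step.
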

\begin{proof}

Since~\(\Psi^{D,B}\) and~\(\eta\)  are injective, so is~\(\widetilde{\eta}\). Next, we consider~\((D,\gamma)\), \((B\boxtimes_{\zeta}B,\beta\bowtie\beta)\) and~\((D\boxtimes_{\zeta}B,\gamma\bowtie\beta)\), \((B,\beta)\) in~\eqref{eq:Psi}. Then we get the following injective unital \Star{}homomorphisms:
\begin{align*}
  & \Psi^{D,B\boxtimes_{\zeta}B}\colon \Cont(\T)\boxtimes_{\zeta}D\boxtimes_{\zeta}(B\boxtimes_{\zeta}B) 
  \to (\Cont(\T)\boxtimes_{\zeta}D)\otimes (\Cont(\T)\boxtimes_{\zeta}B\boxtimes_{\zeta}B),\\
  & \Psi^{D\boxtimes{\zeta}B,B}\colon \Cont(\T)\boxtimes_{\zeta}(D\boxtimes_{\zeta}B)\boxtimes_{\zeta}B
  \to (\Cont(\T)\boxtimes_{\zeta}D\boxtimes_{\zeta}B)\otimes (\Cont(\T)\boxtimes_{\zeta}B),
 \end{align*}
satisfying
 \begin{alignat*}{3}
  &\Psi^{D,B\boxtimes{\zeta}B} j_{1}(a)
  =(j_{1}\otimes j_{1})\Comult[\Cont(\T)](a),
  \qquad
  &&\Psi^{D\boxtimes{\zeta}B,B} j_{1}(a)
  =(j_{1}\otimes j_{1})\Comult[\Cont(\T)](a),\\     
  &\Psi^{D,B\boxtimes{\zeta}B} j_{2}(d) 
  =(j_{2}\otimes j_{1})\gamma(d),
  \qquad
  &&\Psi^{D\boxtimes{\zeta}B,B} j_{2}(d)    
  =(j_{2}\otimes j_{1})\gamma(d),\\
  &\Psi^{D,B\boxtimes{\zeta}B} j_{3}(b)
  =1_{\Cont(\T)\boxtimes_{\zeta}D}\otimes j_{2}(b),
  \qquad
  &&\Psi^{D\boxtimes{\zeta}B,B} j_{3}(b)
  =(j_{3}\otimes j_{1})\beta(b),\\
  &\Psi^{D,B\boxtimes{\zeta}B} j_{4}(b)
  =1_{\Cont(\T)\boxtimes_{\zeta}D}\otimes j_{3}(b), 
  \qquad
  &&\Psi^{D\boxtimes{\zeta}B,B} j_{4}(b)
  =1_{\Cont(\T)\boxtimes_{\zeta}D\boxtimes_{\zeta}B}\otimes j_{2}(b),
 \end{alignat*}
 for all \(a\in\Cont(\T)\),~\(b\in B\) and~\(d\in D\). Using these maps and the equivariance of \(\Comult[B]\) with respect to the \(\T\)\nb-actions~\(\beta\) and~\(\beta\bowtie\beta\) on~\(B\) and~\(B\boxtimes_{\zeta}B\), we obtain the following identities:
 \begin{align}
  \label{eq:aux-1}
  \begin{split}
  (\Id_{\Cont(\T)\boxtimes_{\zeta}D}\otimes(\Id_{\Cont(\T)}\boxtimes_{\zeta}\Comult[B]))\circ \Psi^{D,B}
  &= \Psi^{D,B\boxtimes_{\zeta}B}\circ (\Id_{\Cont(\T)\boxtimes_{\zeta}D}\boxtimes_{\zeta}\Comult[B]),\\
  ((\Id_{\Cont(\T)}\boxtimes_{\zeta}\eta)\otimes\Id_{\Cont(\T)\boxtimes_{\zeta}B})\circ\Psi^{D,B}
  &=\Psi^{D\boxtimes_{\zeta}B,B}\circ (\Id_{\Cont(\T)}\boxtimes_{\zeta}\eta\boxtimes_{\zeta}\Id_{B}).
  \end{split}
 \end{align}
 Using the condition~\((\gamma\otimes\Id_{\Cont(\T)})\circ\gamma = (\Id_{D}\otimes\Comult[\Cont(\T)]) \circ \gamma\) we can verify the following equation~\((\Id_{\Cont(\T)\boxtimes_{\zeta}D}\otimes\Psi^{B,B})\circ\Psi^{D,B\boxtimes_{\zeta}B}\circ j_{l}=(\Psi^{D,B}\otimes\Id_{\Cont(\T)\boxtimes_{\zeta}B})\circ\Psi^{D\boxtimes_{\zeta}B,B}\circ j_{l}\) for~\(l=1,2,3,4\). Combining it with~\eqref{eq:aux-1} and using~\eqref{def:br_act} we get 
  \[
  (\widetilde{\eta}\otimes\Id_{C'})\circ\widetilde{\eta}\circ j_{l} =
  (\Id_{\Cont(\T)\boxtimes_{\zeta}D}\otimes\Comult[C'])\circ\widetilde{\eta}\circ j_{l}
 \qquad
 \text{for~\(l=1,2\).}
  \]
  Recall the (braided) Podle\'s condition Definition~\ref{def:br_act}(2) for~\(\eta\), the (ordinary) Podle\'s condition~\(\gamma(D)(1_{D}\otimes\Cont(\T)=D\otimes\Cont(\T)\) for~\(\gamma\) and the cancellation condition~\(\Comult[\Cont(\T)](\Cont(\T)(1\otimes\Cont(\T))=\Cont(\T)\otimes\Cont(\T)\) for~\(\Comult[\Cont(\T)]\). Using them 
  we verify the Podle\'s condition for~\(\widetilde{\eta}\):
  \begin{align*}
   & \widetilde{\eta}\bigl(j_{1}(\Cont(\T))j_{2}(D)\bigr)
       \bigl(1_{\Cont(\T)\boxtimes_{\zeta}D}\otimes C')\bigr)\\
   &=\bigl(\Psi^{D,B}(\Cont(\T)\boxtimes_{\zeta}\eta(D))\bigr)
   \bigl(1_{\Cont(\T)\boxtimes_{\zeta}D}\otimes j_{2}(B)j_{1}(\Cont(\T))\bigr)\\ 
   &=\bigl(\Psi^{D,B}(\Cont(\T)\boxtimes_{\zeta}(\eta(D)j_{2}(B))\bigr)
   \bigl(1_{\Cont(\T)\boxtimes_{\zeta}D}\otimes j_{1}(\Cont(\T))\bigr)\\ 
   &=\bigl(\Psi^{D,B}(\Cont(\T)\boxtimes_{\zeta}D\boxtimes_{\zeta}B)\bigr)
   \bigl(1_{\Cont(\T)\boxtimes_{\zeta}D}\otimes j_{1}(\Cont(\T))\bigr)\\ 
   &=\bigl((j_{1}\otimes j_{1})\Comult[\Cont(\T)](\Cont(\T))\bigr)
        \bigl((j_{2}\otimes j_{1})\gamma(D)\bigr)
       \bigl(1_{\Cont(\T)\boxtimes_{\zeta}D}\otimes j_{2}(B)j_{1}(\Cont(\T))\bigr)\\         
   &=\bigl((j_{1}\otimes j_{1})\Comult[\Cont(\T)](\Cont(\T))\bigr)
        \bigl((j_{2}\otimes j_{1})(\gamma(D)(1\otimes\Cont(\T))\bigr)
       \bigl(1_{\Cont(\T)\boxtimes_{\zeta}D}\otimes j_{2}(B)\bigr)\\       
   &=\bigl((j_{1}\otimes j_{1})(\Comult[\Cont(\T)](\Cont(\T))(1_{\Cont(\T)}\otimes\Cont(\T))\bigr)
       \bigl(j_{2}(D)\otimes j_{2}(B)\bigr)\\
   &= j_{1}(\Cont(\T))j_{2}(D)\otimes j_{1}(\Cont(\T))j_{2}(B)
     =\Cont(\T)\boxtimes_{\zeta}D\otimes C' . \qedhere
  \end{align*}
\end{proof}
\subsection{Universal property of $\textup{Aut}(\textup{M}_{n}(\mathbb{C}),\Pi,\phi)$}
\begin{definition}
Let~\(\G_{1}=(B_{1},\beta_{1},\Comult[B_{1}])\) and~\(\G_{2}=(B_{2},\beta_{2},\Comult[B_{2}])\) be braided compact quantum groups over~\(\T\). A ~\emph{braided compact quantum groups homomorphism} \(f\colon\G_1\to \G_2\) is an element~\(f\in\Mor^{\T}(B_{1},B_{2})\) such that~\((f\boxtimes_{\zeta}f)\circ\Comult[B_{1}]=\Comult[B_{2}]\circ f\).
\end{definition}
Let~\(D\) be a unital \(\Cst\)\nb-algebra carrying an action~\(\gamma\) of~\(\T\). Suppose 
\(\gamma\) preserves a faithful state~\(\phi\) on~\(D\). Let~\(\mathcal{C}^{\T}(D,\gamma,\phi)\) be the category with the pairs~\((\G,\eta)\) consisting of a braided compact quantum groups~\(\G\) over~\(\T\) and \(\eta\) is a \(\phi\) preserving action of~\(\G\) on~\((D,\gamma)\) as objects. An arrow between two objects~\((\G_{1},\eta_{1})\) and~\((\G_{2},\eta_{2})\) is a braided compact quantum group homomrphism~\(f\colon\G_{1}\to\G_{2}\) such that~\((\Id_{\Mat{n}(\C)}\boxtimes_{\zeta}f)\circ \eta_{1}=\eta_{2}\circ f\). 
\begin{theorem}
 \label{the:univ_obj}
  \(\Aut(\Mat{n}(\C),\Pi,\phi)\) is the universal, initial object of the category \(\mathcal{C}^{\T}(\Mat{n}(\C),\Pi,\phi)\).
\end{theorem}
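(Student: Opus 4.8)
The plan is to exhibit $\Aut(\Mat{n}(\C),\Pi,\phi)$ together with the action $\eta$ of Proposition~\ref{prop:action_qnt_on_Matx} as the initial object, so I must show: given any pair $(\G,\eta')$ in $\mathcal{C}^{\T}(\Mat{n}(\C),\Pi,\phi)$ with $\G=(B,\beta,\Comult[B])$ and $\eta'\in\Mor^{\T}(\Mat{n}(\C),\Mat{n}(\C)\boxtimes_{\zeta}B)$ a $\phi$-preserving action, there is a unique braided compact quantum group homomorphism $f\colon\Aut(\Mat{n}(\C),\Pi,\phi)\to\G$ with $(\Id_{\Mat{n}(\C)}\boxtimes_{\zeta}f)\circ\eta=\eta'\circ f$. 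First I would write $\eta'$ in matrix form, $\eta'(E_{ij})=\sum_{k,l=1}^{n}E_{kl}\boxtimes_{\zeta}b^{kl}_{ij}$ for elements $b^{kl}_{ij}\in B$ as in~\eqref{eq:Mn-action}. Since $\eta'$ is $\T$-equivariant, each $b^{kl}_{ij}$ is homogeneous of degree $d_{k}-d_{i}+d_{j}-d_{l}$ (this is forced by comparing degrees on both sides using the definition of $\Pi$ and the diagonal action on the braided tensor product). Then I would translate the three defining properties of $\eta'$ into relations among the $b^{kl}_{ij}$: multiplicativity $\eta'(E_{ij})\eta'(E_{kl})=\delta_{j,k}\eta'(E_{il})$ yields (after using the commutation relation~\eqref{eq:comm-brd_tensor} in $\Mat{n}(\C)\boxtimes_{\zeta}B$, exactly as in the computation in the proof of Proposition~\ref{prop:action_qnt_on_Matx}) relations of the type~\eqref{eq:cond-2} and, from $\eta'(E_{il})\eta'(E_{ij})$-type products, \eqref{eq:cond-3}; the $*$-relation $\eta'(E_{ij}^{*})=\eta'(E_{ij})^{*}$ gives~\eqref{eq:cond-4}; unitality $\sum_i\eta'(E_{ii})=1\boxtimes_{\zeta}1$ gives~\eqref{eq:cond-5}; and the Podleś condition (or equivalently $\eta'$ being a representation via a unitary $u'$) gives~\eqref{eq:cond-6}.

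Once the $b^{kl}_{ij}$ are shown to satisfy~\eqref{eq:cond-1}--\eqref{eq:cond-6}, the universal property of $A$ (established in Theorem~\ref{the:qnt_aut_matrx}) furnishes a unique unital $\Star$homomorphism $f\colon A\to B$ with $f(u^{ij}_{kl})=b^{ij}_{kl}$. I would then check that $f$ is $\T$-equivariant, which is immediate since $f$ sends the homogeneous generator $u^{ij}_{kl}$ (of degree $d_{i}-d_{j}+d_{k}-d_{l}$ under $\alpha$, by~\eqref{eq:cond-1}) to the homogeneous element $b^{ij}_{kl}$ of the same degree under $\beta$; hence $f\in\Mor^{\T}(A,B)$. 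Next, $f$ intertwines comultiplications: $(f\boxtimes_{\zeta}f)\Comult[A](u^{ij}_{kl})=\sum_{r,s}b^{ij}_{rs}\boxtimes_{\zeta}b^{rs}_{kl}$, and this equals $\Comult[B](b^{ij}_{kl})$ precisely because the coassociativity / comodule identity $(\Id_{\Mat{n}(\C)}\boxtimes_{\zeta}\Comult[B])\circ\eta'=(\eta'\boxtimes_{\zeta}\Id_{B})\circ\eta'$ reads, on matrix coefficients, $\Comult[B](b^{ij}_{kl})=\sum_{r,s}b^{ij}_{rs}\boxtimes_{\zeta}b^{rs}_{kl}$; one has to be a little careful that $\Comult[B]$ maps into $B\boxtimes_{\zeta}B$ and that the braided tensor leg notation is consistent, but this is a routine bookkeeping check using~\eqref{eq:brd_comult}. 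So $f$ is a braided compact quantum group homomorphism. The compatibility with the actions, $(\Id_{\Mat{n}(\C)}\boxtimes_{\zeta}f)\circ\eta=\eta'$, holds by construction on the generators $E_{ij}$ (both sides equal $\sum_{k,l}E_{kl}\boxtimes_{\zeta}b^{kl}_{ij}$) and hence everywhere. Uniqueness of $f$ is clear, since any such morphism must send $u^{ij}_{kl}$ to $b^{ij}_{kl}$ by the compatibility equation read on $E_{ij}$, and the $u^{ij}_{kl}$ generate $A$.

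The main obstacle, as usual in this circle of ideas, is the \emph{bookkeeping of the braiding phases}: verifying that multiplicativity and the $*$-relation for $\eta'$ produce exactly relations~\eqref{eq:cond-2}--\eqref{eq:cond-4} with the prescribed powers of $\zeta$ requires carefully tracking degrees of the homogeneous elements $E_{ij}$ and $b^{kl}_{ij}$ through every application of~\eqref{eq:comm-brd_tensor}; the sign conventions must match those already fixed in Theorem~\ref{the:qnt_aut_matrx} and in the displayed computation in the proof of Proposition~\ref{prop:action_qnt_on_Matx}. A secondary subtlety is that the Podleś condition for $\eta'$ should be shown to force~\eqref{eq:cond-6} (the ``column sum'' relation) — it may be cleaner to observe instead that a $\phi$-preserving action is automatically implemented by a unitary $u'=\sum E_{ik}\otimes E_{jl}\otimes b^{ij}_{kl}$ (the argument being the same unitarity computation carried out in the proof of Theorem~\ref{the:qnt_aut_matrx}), and to read off both~\eqref{eq:cond-5} and~\eqref{eq:cond-6} from $u'^{*}u'=1=u'u'^{*}$. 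Modulo these careful-but-routine verifications, the universal property follows formally.
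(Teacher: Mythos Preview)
Your approach is direct---verify the relations \eqref{eq:cond-2}--\eqref{eq:cond-6} for the $b^{ij}_{kl}$ by unpacking the axioms of $\eta'$---whereas the paper takes a detour through bosonisation: it lifts $\eta'$ to an ordinary action $\widetilde{\eta}$ of the bosonisation $\Bialg{C'}$ of $\G$ on $\Cont(\T)\boxtimes_{\zeta}\Mat{n}(\C)$ via Proposition~\ref{prop:bos_act_lift}, restricts to $\Mat{n}(\C)$, invokes Wang's classical universal property \cite{W1998}*{Theorem~4.1} to conclude that the elements $z^{d_{k}-d_{l}}\boxtimes_{\zeta} b^{kl}_{ij}$ satisfy \eqref{eq:cond-2}--\eqref{eq:cond-6} with $\zeta=1$, and then uses the commutation $zb^{ij}_{kl}z^{*}=\zeta^{d_{i}-d_{k}+d_{l}-d_{j}}b^{ij}_{kl}$ inside $C'$ to pull these back to the braided relations for the $b^{ij}_{kl}$. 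This buys two things: it eliminates the phase bookkeeping you yourself flag as the main obstacle, and it outsources the subtle half of the argument to the already-proven unbraided case.

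There is one genuine wrinkle in your outline. Relation \eqref{eq:cond-3} does \emph{not} come from multiplicativity of $\eta'$. Expanding $\eta'(E_{ij})\eta'(E_{pq})$ and collapsing $E_{kl}E_{rs}=\delta_{l,r}E_{ks}$ forces a sum over an \emph{upper} index of the $b$'s, which yields only relations of type \eqref{eq:cond-2}; relation \eqref{eq:cond-3} has the contraction on a \emph{lower} index and is independent. (Similarly, \eqref{eq:cond-6} comes from $\phi$-preservation---apply $\phi\boxtimes_{\zeta}\Id_{B}$ to $\eta'(E_{ij})$---rather than from Podle\'s.) Your fallback of showing that $u'=\sum E_{ik}\otimes E_{jl}\otimes b^{ij}_{kl}$ is unitary and reading both \eqref{eq:cond-2}--type and \eqref{eq:cond-3}--type relations off $u'^{*}u'=1=u'u'^{*}$ is the right repair, but it needs an argument that a $\phi$-preserving action of a \emph{braided} compact quantum group is unitarily implemented; this is not established elsewhere in the paper, and is precisely the kind of step the bosonisation trick lets one avoid by reducing to Wang.
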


 Let~\(\G=(B,\beta,\Comult[B])\) be a braided compact quantum group over~\(\T\) and let \(\eta\colon \Mat{n}(\C)\to\Mat{n}(\C)\boxtimes_{\zeta} B\) be an action of~\(\G\) on~\(\Mat{n}(\C)\).
 Since~\(\{E_{ij}\}_{i,j=1}^{n}\) is basis of~\(\Mat{n}(\C)\), ~\(\eta\) is uniquely determined by its matrix coefficients~\(\{b_{ij}^{kl}\}_{1\leq i,j,k,l\leq n}\subseteq B\) given by~\eqref{eq:Mn-action}.

Now~\(\eta\) is~\(\T\)\nb-equivariant for the actions~\(\Pi\) and~\(\Pi\bowtie\beta\), and 
\(E_{ij}\in\Mat{n}(\C)\) is a homogeneous element of degree~\(d_{i}-d_{j}\). Then 
\(\eta(E_{ij})\in\Mat{n}(\C)\boxtimes_{\zeta}B\) must also be a homogeneous element of degree~\(d_{i}-d_{j}\). Therefore, each term on the right hand side of~\eqref{eq:Mn-action} appearing under summation must also be a homogeneous element of degree~\(d_{i}-d_{j}\). Equivalently,
 \[
       z^{d_{i}-d_{j}}(E_{kl}\boxtimes_{\zeta} b^{kl}_{ij})
    = (\Pi\bowtie\beta)_{z} (E_{kl}\boxtimes_{\zeta} b^{kl}_{ij})
    =\Pi_{z}(E_{kl})\boxtimes_{\zeta}\beta_{z}(b^{kl}_{ij})
    =z^{d_{k}-d_{l}}E_{kl}\boxtimes_{\zeta}\beta_{z}(b^{kl}_{ij})
 \]
Therefore, the restriction of~\(\beta\) on the elements~\(\{b^{ij}_{kl}\}_{i,j,k,l=1}^{n}\subseteq B\)
 is given by 
  \begin{equation}
  \label{eq:deg_b}
  \beta_{z}(b^{ij}_{kl})=z^{d_{k}-d_{i}+d_{j}-d_{l}}b^{ij}_{kl},
  \quad\text{for all~\(z\in\T\).}
 \end{equation}
Next we determine the restriction of~\(\Comult[B]\) on~\(\{b^{ij}_{kl}\}_{i,j,k,l=1}^{n}\). Let~\(\Hils[L]'\) be a separable Hilbert space with a continuous representation~\(\pi'\) of~\(\T\). Suppose~\(\{\lambda'_{m}\}_{m\in\N}\) is an orthonormal basis of eigenvectors for the~\(\T\)\nb-action: \(\pi'_{z}(\lambda_{m})=z^{l'_{m}}\lambda'_{m}\) for some~\(l'_{m}\in\N\). Let~\(B\hookrightarrow\Bound(\Hils[L])\) be a faithful,~\(\T\)\nb-equivariant representation. 

 Also, recall the \(n\)\nb-dimensional vector space~\(W\) with the basis~\(\{e_{1},e_{2},\cdots ,e_{n}\}\) equipped with \(\T\)\nb-action~\(\rho\) considered in the proof of the Theorem~\ref{the:qnt_aut_matrx}. Then \(E_{ij}e_{k}=\delta_{j,k}e_{i}\) for all~\(1\leq i,j,k\leq n\) is a faithful and~\(\T\)\nb-equivariant representation of~\(\Mat{n}(\C)\) on~\(W\).

The braiding unitary~\(\Braiding{\Hils[L]'}{W}\colon\Hils[L]'\otimes W\to W\otimes\Hils[L]'\) and its inverse \(\Dualbraiding{W}{\Hils[L]'}\colon W\otimes\Hils[L]'\to\Hils[L]'\otimes W\) are similar to~\eqref{eq:braiddef}. On the basis vectors they are defined by 
 \begin{equation} 
  \label{eq:braiddef-1}
  \Braiding{\Hils[L]'}{W}(\lambda'_{m}\otimes e_{i})=\zeta^{d_{i}l'_{m}}\cdot (e_{i}\otimes\lambda'_{m}), 
  \qquad
  \Dualbraiding{W}{\Hils[L]'}(e_{i}\otimes \lambda'_{m})=\zeta^{-d_{i}l'_{m}} \cdot (\lambda'_{m}\otimes e_{i}). 
 \end{equation}
The elementary braided tensors of~\(\Mat{n}(\C)\boxtimes_{\zeta} B\) are defined by
 \[
   a\boxtimes 1_{B}=a\otimes 1_{\Bound(\Hils[L]')}, 
   \qquad 
   1_{\Mat{n}(\C)}\boxtimes b=\Braiding{\Hils[L]'}{W}(b\otimes 1_{\Bound(W)})\Dualbraiding{W}{\Hils[L]'}.
 \]
 In particular, we note that  
 \begin{equation*}
   (1_{\Mat{n}(\C)}\boxtimes_{\zeta} b^{ij}_{kl}) (e_{k}\otimes\lambda'_{m})
   =\zeta^{-d_{k}l'_{m}}\cdot \Braiding{\Hils[L]'}{W}(b^{ij}_{kl}\lambda'_{m}\otimes e_{k})
   =\zeta^{d_{k}(d_k-d_i+d_j-d_l)} e_{k}\otimes b^{ij}_{kl}\lambda'_{m},
  \end{equation*}
  for all~\(1\leq k\leq n\) and~\(m\in\N\). It also gives us the expected commutation 
  relation~\eqref{eq:comm-brd_tensor}:
  \begin{equation}
  \label{eq:E-b-comm}
  (1\boxtimes b^{ij}_{kl})(E_{rs}\boxtimes 1)
   =\zeta^{(d_{r}-d_{s})(d_{k}-d_{i}+d_{j}-d_{l})} (E_{rs}\boxtimes b^{ij}_{kl}),
   \quad\text{for~\(1\leq i,j,k,l,r,s\leq n\).}
 \end{equation}
 The second condition in Definition~\ref{def:br_act} implies
 \[
   \sum_{k,l,r,s=1}^{n}E_{rs}\boxtimes b^{rs}_{kl}\boxtimes b^{kl}_{ij} 
   =(\alpha\boxtimes\Id_{B})\alpha(E_{ij}) 
   =\sum_{r,s=1}^{n}E_{rs}\boxtimes\Comult[B](b^{rs}_{ij}).
 \]
 Next we compute the action of the first and the third terms appearing 
 in the above equation on the basis vectors of~\(W\otimes\Hils[L]'\otimes\Hils[L]'\). 
 For all~\(1\leq p\leq n\) and~\(x,y\in\N\), we compute
 \[
   \Bigl(\sum_{k,l,r,s=1}^{n}E_{rs}\boxtimes b^{rs}_{kl}\boxtimes b^{kl}_{ij}\Bigr)
          (e_p\otimes\lambda'_{x}\otimes\lambda'_{y})
   = \sum_{k,l,r=1}^{n}\zeta^{d_{p}(d_{i}-d_{r}-d_{p}-d_{j})} 
      e_{r}\otimes b^{rp}_{kl}\lambda'_{x}\otimes b^{kl}_{ij}\lambda'_{y},
 \]
 and 
 \[
 \Bigl(\sum_{k,l,r,s=1}^{n}E_{rs}\boxtimes \Comult[B](b^{rs}_{ij})\Bigr)
  (e_{p}\otimes\lambda'_{x}\otimes\lambda'_{y})
 =\sum_{r=1}^{n} \zeta^{d_{p}(d_{i}-d_{r}-d_{p}-d_{j})} 
 e_{r} \otimes\Comult[B](b^{rp}_{ij})(\lambda'_{x}\otimes\lambda'_{y}).
 \]
Thus we obtain the restriction 
 of~\(\Comult[B]\) on~\(\{b^{ij}_{kl}\}_{i,j,k,l=1}^{n}\subseteq B\) given by 
 \begin{equation}
  \label{eq:comult-rest}
   \Comult[B](b^{ij}_{kl})=\sum_{r,s=1}^{n}b^{ij}_{rs}\boxtimes_{\zeta}b^{rs}_{kl}.
 \end{equation}

\begin{proof}[Proof of \textup{Theorem~\ref{the:univ_obj}}] 
Suppose~\(\G=(B,\beta,\Comult[B])\) is an object of~\(\mathcal{C}^{\T}(\Mat{n}(\C),\Pi,\phi)\). Denote the action of~\(\G\) on~\(\Mat{n}(\C)\) by~\(\eta\colon \Mat{n}(\C)\to\Mat{n}(\C)\boxtimes B\). Then~\(\eta\) satisfies the following condition:
 \begin{equation}
  \label{eq:trace-prsv}
  (\phi\boxtimes\Id_{B})\eta(M)=\phi(M)1_{B},
  \qquad\text{for all~\(M\in\Mat{n}(\C)\).}
 \end{equation}

 Suppose~\(\G[H]=\Bialg{C'}\) is the bosonisation of~\(\G=(B,\beta,\Comult[B])\) 
 and~\(\widetilde{\eta}\) be the corresponding action of~\(\G[H]\) on~\(\Cont(\T)\boxtimes_{\zeta}\Mat{n}(\C)\), in Proposition~\ref{prop:bos_act_lift} for~\((\Mat{n}(\C),\Pi)\). 
 On the elementary braided tensors~\(z^{r}\boxtimes_{\zeta}E_{ij}\) it is defined by
 \begin{equation}
  \label{ext_act-bos}
  \widetilde{\eta}(z^{r}\boxtimes_{\zeta}E_{ij}) 
  =\sum_{k,l=1}^{n}(z^{r}\boxtimes_{\zeta}E_{kl})\otimes (z^{d_{k}-d_{l}+r}\boxtimes_{\zeta}b^{kl}_{ij}),
  \quad\text{for all~\(r\in\Z, 1\leq i,j\leq n\),}
 \end{equation}
 where~\(\{b_{ij}^{kl}\}_{i,j,k,l=1}^{n}\) are the matrix elements of~\(\eta\) given by~\eqref{eq:Mn-action}. Also, Theorem~\ref{the:Boson} gives us the following commutation relation in~\(D=j_{1}(\Cont(\T))j_{2}(B)=\Cont(\T)\boxtimes_{\zeta}B\):
 \begin{equation}
  \label{eq:bos_gen_comm}
       (z\boxtimes_{\zeta}1_{B})(1_{\Cont(\T)}\boxtimes_{\zeta}b^{ij}_{kl})
       (z\boxtimes_{\zeta} 1_{B})^{*} 
   =  \zeta^{-(d_{k}-d_{i}+d_{j}-d_{l})}\cdot (1_{\Cont(\T)}\boxtimes_{\zeta} b^{ij}_{kl}).
 \end{equation}
for all~\(1\leq i,j,k,l\leq n\). The restriction of~\(\widetilde{\eta}\) on~\(\Mat{n}(\C)\) becomes 
\begin{equation}
 \label{eq:bos_act_res}
  \widetilde{\eta}(1_{\Cont(\T)}\boxtimes_{\zeta}E_{ij}) 
  =\sum_{k,l=1}^{n}(1_{\Cont(\T)}\boxtimes_{\zeta}E_{kl})\otimes (z^{d_{k}-d_{l}}\boxtimes_{\zeta}b^{kl}_{ij}),
  \quad\text{for all~\(1\leq i,j\leq n\).}
 \end{equation}
 Then we can verify that~\(\widetilde{\eta}|_{\Mat{n}(\C)}\) defines an action of~\(\G[H]\) on~\(\Mat{n}(\C)\) and preserves~\(\phi\). So, the matrix elements~\(\{z^{d_{i}-d_{j}}\boxtimes_{\zeta}b^{ij}_{lk}\}_{i,j,k,l=1}^{n}\) of~\(\widetilde{\eta}|_{\Mat{n}(\C)}\) must satisfy the relations~\eqref{eq:cond-2}-\eqref{eq:cond-6} with~\(\zeta=1\) by~\cite{W1998}*{Theorem 4.1}. Subsequently, using~\eqref{eq:bos_gen_comm} we can easily show that~\(\{b^{ij}_{kl}\}_{i,j,k,l=1}^{n}\) satisfy the relations~\eqref{eq:cond-2}-\eqref{eq:cond-6} in Theorem~\ref{the:qnt_aut_matrx}. Then the universal property of~\(A\) gives a unique \Star{}homomorphism~\(f\colon A\to B\) such that~\(f(u^{ij}_{kl})=b^{ij}_{kl}\) for all~\(i,j,k,l\in\{1,\cdots ,n\}\). Clearly, \(f\) is~\(\T\)\nb-equivariant and satisfies~\((f\boxtimes_{\zeta}f)\circ\Comult[A]=\Comult[B]\circ f\); hence~\(f\colon\Aut(\Mat{n}(\C),\Pi,\phi)\to\G\) is a braided compact quantum group homomorphism. 
\end{proof}
\subsection{Bosonisation as the inhomogeneous quantum symmetry group}
 \label{subsec:Bos_Drinf}
 \begin{definition}[\cite{BS2013}*{Definition 2.1}]
 Let \(D\) be a unital \(\Cst\)\nb-algebra and let~\(\tau_{D}\) be a faithful state on \(D\). An \emph{orthogonal filtration} for the pair \((D,\tau_{D})\) is a sequence of finite dimensional subspaces \(\{D_{i}\}_{i\in\mathcal{I}}\) of~\(D\), where~\(\mathcal{I}\) is the index set with a distinguished element~\(0\) such that \(D_{0}=\C\cdot 1_{D}\), \(\textup{Span}(\cup_{i\in \mathcal{I}}D_{i})\) is dense in \(D\) and \(\tau_{D}(a^{*}b)=0\) if \(a\in D_{i}\), \(b\in D_{j}\) and \(i\neq j\). We denote the triple~\((D,\tau_{D},\{D_{i}\}_{i\in\mathcal{I})})\) by~\(\widetilde{D}\).
\end{definition}
Let~\(\G[H]=\Bialg{Q}\) is a compact quantum group and let~\(\widetilde{D}\) be an orthogonal filtration of~\(D\). An action~\(\eta\colon D\to D\otimes Q\) is said to be \emph{filtration preserving} if~\(\eta(D_{i})\subseteq D_{i}\otimes_{\textup{alg}}Q\) for all~\(i\in\mathcal{I}\). Let~\(\mathcal{C}(\widetilde{D})\) be the category with objects as pairs~\((\G[H],\eta)\), where~\(\G[H]\) is a compact quantum group, \(\eta\) is a filtration preserving action of~\(\G[H]\) on~\(D\), and arrows are morphisms of compact quantum groups intertwining the respective actions. In~\cite{BS2013} it was also observed that if~\((\G[H],\varrho)\) is an object of~\(\mathcal{C}(\widetilde{D})\), then~\(\varrho\) preserves~\(\tau_{D}\).
\begin{theorem}[\cite{BS2013}*{Theorem 2.7}]
There exists a universal initial object in the category~\(\mathcal{C}(\widetilde{D})\), called the quantum symmetry group of the filtration~\(\widetilde{D}\) and denoted by~\(\textup{QISO}(\widetilde{D})\).
\end{theorem}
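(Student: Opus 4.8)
This is a (cited) result of Banica and Skalski; the natural route is the Wang-type ``free object'' construction adapted to an orthogonal filtration, and I would proceed as follows. Fix, for every $i\in\mathcal{I}$, a linear basis $\{e^{(i)}_1,\dots,e^{(i)}_{n_i}\}$ of $D_i$ that is orthonormal for the GNS inner product $\langle a,b\rangle\defeq\tau_D(a^{*}b)$, with $e^{(0)}_1=1_D$. Let $\mathcal{A}$ be the universal unital $*$-algebra generated by symbols $u^{(i)}_{kl}$ ($i\in\mathcal{I}$, $1\le k,l\le n_i$) subject to the relations that make the linear map $\eta_0\colon D\to D\otimes_{\textup{alg}}\mathcal{A}$, $\eta_0(e^{(i)}_l)=\sum_{k}e^{(i)}_k\otimes u^{(i)}_{kl}$, a unital $*$-homomorphism — the finitely many polynomial identities read off from the multiplication table, the involution and the unit of $D$, each product $e^{(i)}_k e^{(j)}_l$ lying in a finite sum of the $D_m$ — together with the unitarity relations $\sum_p(u^{(i)}_{pk})^{*}u^{(i)}_{pl}=\delta_{kl}1=\sum_p u^{(i)}_{kp}(u^{(i)}_{lp})^{*}$.

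The main point is that the enveloping $\Cst$-algebra $A$ of $\mathcal{A}$ (its completion in the largest $\Cst$-seminorm) exists, that is, the generators are bounded in every $*$-representation; this is where the orthogonality of the filtration does the work. Applying $\tau_D\otimes\Id$ to $\eta_0(e^{(i)}_l)$ for $i\neq 0$ gives $\sum_k\tau_D(e^{(i)}_k)u^{(i)}_{kl}=0$, since $\tau_D(e^{(i)}_k)=\tau_D(1_D^{*}e^{(i)}_k)=0$ because $1_D\in D_0$ and $e^{(i)}_k\in D_i$; combined with $\eta_0(1_D)=1_D\otimes 1_{\mathcal{A}}$, this gives $(\tau_D\otimes\Id)\eta_0(x)=\tau_D(x)1$ for all $x\in\textup{Span}(\bigcup_{i\in\mathcal{I}}D_i)$, i.e.\ $\eta_0$ is $\tau_D$-preserving. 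Since $\eta_0$ is a $*$-homomorphism and $\langle e^{(i)}_k,e^{(i)}_l\rangle=\delta_{kl}$, applying $\tau_D\otimes\Id$ to $\eta_0\!\bigl((e^{(i)}_k)^{*}e^{(i)}_l\bigr)$ already forces $\sum_p(u^{(i)}_{pk})^{*}u^{(i)}_{pl}=\delta_{kl}1$, so one of the two unitarity relations is in fact automatic; and an entry of a matrix $u^{(i)}$ with $(u^{(i)})^{*}u^{(i)}=1$ has norm $\le 1$. Hence $A$ exists and each $u^{(i)}=(u^{(i)}_{kl})$ is a finite-dimensional unitary corepresentation over $A$.

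Next, define $\Comult[A](u^{(i)}_{kl})\defeq\sum_m u^{(i)}_{km}\otimes u^{(i)}_{ml}$; this assignment respects the defining relations, so by the universal property of $A$ it extends to a unital $*$-homomorphism $\Comult[A]\colon A\to A\otimes A$, with coassociativity a check on generators. As the $u^{(i)}$ are unitary corepresentations, the $*$-subalgebra $\mathcal{A}\subseteq A$ is a Hopf $*$-algebra, with counit $\varepsilon(u^{(i)}_{kl})=\delta_{kl}$ and antipode $S(u^{(i)}_{kl})=(u^{(i)}_{lk})^{*}$; consequently the cancellation conditions $\Comult[A](A)(1\otimes A)=A\otimes A=\Comult[A](A)(A\otimes 1)$ hold, and $\Bialg{A}$ is a compact quantum group~\cite{W1998a}. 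The map $\eta_0$ extends to $\eta\colon D\to D\otimes A$, which is filtration preserving by construction, and the Podle\'s density condition for $\eta$ follows from the existence of $S$ and $\varepsilon$ on $\mathcal{A}$; thus $(\Bialg{A},\eta)$ is an object of $\mathcal{C}(\widetilde{D})$, and $\eta$ automatically preserves $\tau_D$, as observed in~\cite{BS2013}.

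Finally, for universality, let $(\G[H]=\Bialg{Q},\varrho)$ be any object of $\mathcal{C}(\widetilde{D})$ and write $\varrho(e^{(i)}_l)=\sum_k e^{(i)}_k\otimes v^{(i)}_{kl}$. Then $\varrho$ is a unital $*$-homomorphism and $\tau_D$-preserving (by the same orthogonality argument), so by averaging $v^{(i)}$ against the Haar state of $Q$ each $v^{(i)}=(v^{(i)}_{kl})$ is a unitary corepresentation; hence the $v^{(i)}_{kl}$ satisfy all the defining relations of $\mathcal{A}$, and the universal property of $A$ yields a unital $*$-homomorphism $\pi\colon A\to Q$ with $\pi(u^{(i)}_{kl})=v^{(i)}_{kl}$. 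Reading off the formulas gives $(\pi\otimes\pi)\circ\Comult[A]=\Comult[Q]\circ\pi$ and $(\Id_D\otimes\pi)\circ\eta=\varrho$, and $\pi$ is unique since the $u^{(i)}_{kl}$ generate $A$; putting $\textup{QISO}(\widetilde{D})\defeq\Bialg{A}$ completes the argument. I expect the only non-formal step to be the middle one — deriving from the orthogonality of the filtration that the tautological coaction is $\tau_D$-invariant, hence that the fundamental matrices are bounded and unitary; after that it is the standard Woronowicz/Hopf-algebra machinery.
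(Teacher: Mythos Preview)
The paper does not prove this statement: it is quoted as \cite{BS2013}*{Theorem 2.7} with no argument supplied, so there is nothing in the paper to compare your proposal against.

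That said, your sketch is the standard Wang-type construction that Banica--Skalski themselves carry out, and the outline is sound. The one step I would tighten is the universality paragraph: from filtration preservation you correctly deduce $\tau_D$-invariance, and hence that each matrix $v^{(i)}$ is an \emph{isometry}, $\sum_p (v^{(i)}_{pk})^{*}v^{(i)}_{pl}=\delta_{kl}1_Q$; but you need both unitarity relations to match the defining relations of $A$, and ``averaging against the Haar state'' does not by itself produce $v^{(i)}(v^{(i)})^{*}=1$. The clean way is to use that $\varrho$, being an action of a compact quantum group, restricts on the finite-dimensional invariant subspace $D_i$ to a genuine (hence unitarizable, and---in an orthonormal basis for a preserved inner product---unitary) corepresentation; equivalently, invoke the counit/antipode on the dense Hopf $*$-subalgebra of $Q$ to upgrade the isometry to a unitary. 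Once that is in place, your map $\pi$ exists and the rest is formal.
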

\begin{example}
 \label{ex:Mat_filt}
Define~\(\Mat{n}(\C)_{0}=\textup{Span}\{1_{\Mat{n}(\C)}\}\) and~\(\Mat{n}(\C)_{1}=\textup{Span}\{E_{ij}\mid 1\leq i,j\leq n\text{ and } i\neq j\}\). Then~\(\widetilde{\Mat{n}(\C)}=(\Mat{n}(\C),\phi,\{\Mat{n}(\C)_{i}\}_{i=0,1})\) is an orthogonal filtration. Let~\(\Qgrp{H}{Q}\) be a compact quantum group and let~\(\varrho\colon\Mat{n}(\C)\to\Mat{n}(\C)\otimes Q\) be an action of~\(\G\) on~\(\Mat{n}(\C)\). Then~\(\varrho\) is uniquely determined by the set elements~\(\{q^{kl}_{ij}\}_{1\leq i,j,k,l\leq n}\subseteq Q\) such that
\[
   \varrho(E_{ij})=\sum_{k,l=1}^{n}E_{kl}\otimes q^{kl}_{ij}, 
   \qquad
   \text{for all~\(1\leq i,j\leq n\).}
\]
If~\(\varrho\) preserves the orthogonal filtration~\(\widetilde{\Mat{n}(\C)}\) then~\(\varrho\) preserves the trace~\(\phi\). Conversely, if we assume that~\(\varrho\) preserves the canonical trace~\(\phi\), then~\(\sum_{i=1}^{n}q^{kl}_{ii}=\delta_{k,l}\) for all~\(1\leq k,l\leq n\). Thus~\(\varrho\) preserves the orthogonal filtration~\(\widetilde{\Mat{n}(\C)}\). Hence,~\(\textup{QISO}(\widetilde{\Mat{n}(\C)})\) coincides with Wang's \(\Aut(\Mat{n}(\C),\phi)\) in~\cite{W1998}*{Theorem 4.1}. 
\end{example}
\begin{example}
  \label{ex:Circle_filt}
  Let~\(z\) be the unitary generator of~\(\Cont(\T)\). Then the continuous linear extension of the map~\(z^{r}\to \delta_{r,0}\) for all~\(r\in\Z\) is the canonical trace~\(\tau_{\Cont(\T)}\) on~\(\Cont(\T)\cong\Cred(\Z)\) and the subspaces~\(\Cont(\T)_{i}\defeq\textup{Span}\{z^{i}\}\) for all~\(i\in\Z\) form an orthogonal filtration for~\((\Cont(\T),\tau_{\Cont(\T)})\). Let~\(\Qgrp{H}{Q}\) be a compact quantum group and let~\(\varrho\colon\Cont(\T)\to\Cont(\T)\otimes Q\) be an action of~\(\G\) on~\(\Cont(\T)\). Suppose, \((\G[H],\varrho)\)  
is an object of the category~\(\mathcal{C}(\widetilde{\Cont(\T)})\). Then~\(\varrho(z^{r})=z^{r}\otimes q_{r}\), where~\(q_{r}\in Q\) for all~\(r\in\Z\). Since, \(\varrho\) is a~\Star{}homomorphism then~\(q_{0}=1_{Q}\), \(q_{1}\) is unitary and~\(q_{r}=q_{1}^{r}\) for all~\(r\in\Z\). Moreover, \(\Comult[Q](q_{1})=q_{1}\otimes q_{1}\). Thus~\(\textup{QISO}(\widetilde{\Cont(\T)})\cong\Bialg{\Cont(\T)}\).
\end{example}
\begin{example}
 \label{ex:crossed_filt}
The braided tensor product~\(\Cont(\T)\boxtimes_{\zeta}\Mat{n}(\C)\) is isomorphic to the (reduced) crossed product~\(\Mat{n}(\C)\rtimes\Z\) for the~\(\Z\)\nb-action on~\(\Mat{n}(\C)\) generated by~\(\Pi_{\zeta^{-1}}\). Thus \(\Cont(\T)\boxtimes_{\zeta}\Mat{n}(\C)\) is the universal~\(\Cst\)\nb-algebra generated by a unitary~\(v\) and~\(\{E_{ij}\}_{i,j=1}^{n}\) subject to the relations~\eqref{eq:gen_Mat} and the commutation relation 
\begin{equation}
 \label{eq:cross-prod-univ}
  vE_{ij}v^{*}=\zeta^{d_{i}-d_{j}}E_{ij}.
\end{equation}
Composing~\(\phi\) with the conditional expectation~\(\Mat{n}(\C)\rtimes\Z\to\Mat{n}(\C)\) gives a state~\(\tau\) on~\(\Mat{n}(\C)\rtimes\Z\). Therefore,  
\(\tau\) is the continuous linear extension of the map \(v^{r}E_{ij}\to \frac{1}{n}\delta_{r,0}\cdot \delta_{i,j}\). By~\cite{BMRS2018}*{Proposition 4.9 \& Lemma 6.1} the triple~\(\widetilde{\Mat{n}(\C)\rtimes\Z}\defeq(\Mat{n}(\C)\rtimes\Z , \tau, \{v^{r}\cdot 1_{\Mat{n}(\C)}\}_{r\in\Z}\cup\{v^{s}\cdot E_{ij}\}_{s\in\Z, 1\leq i,j\leq n; i\neq j})\) is an orthogonal filtration.

Recall the bosonisation~\(\textup{Bos}(\Aut(\Mat{n}(\C),\Pi,\phi))=\Bialg{C}\) constructed in Theorem~\ref{the:Boson}. Then we apply Proposition~\ref{prop:bos_act_lift} to the canonical action~\(\eta\) of~\(\Aut(\Mat{n}(\C),\Pi,\phi)\) on~\((\Mat{n}(\C),\Pi)\) in Proposition~\ref{prop:action_qnt_on_Matx}. The resulting action~\(\widetilde{\eta}\colon \Mat{n}(\C)\rtimes\Z\to\Mat{n}(\C)\rtimes\Z\otimes C\) of~\(\Bialg{C}\) on~\(\Mat{n}(\C)\rtimes\Z\) is defined by
\[
  \widetilde{\eta}(v)=v\otimes z, 
  \qquad 
  \widetilde{\eta}(E_{ij})=\sum_{k,l=1}^{n} E_{kl}\otimes z^{d_{k}-d_{l}}u^{kl}_{ij}.
\]
Since~\(\widetilde{\eta}\) preserves the filtration \(\widetilde{\Mat{n}(\C)\rtimes\Z}\) the pair
\((\textup{Bos}(\Aut(\Mat{n}(\C),\Pi,\phi)),\widetilde{\eta})\) is an object of 
\(\mathcal{C}(\widetilde{\Mat{n}(\C)\rtimes\Z})\). 
\end{example}
\begin{proposition}
\label{prop:Boson_isometry}
 \(\textup{QISO}(\widetilde{\Mat{n}(\C)\rtimes\Z})\cong\textup{Bos}(\Aut(\Mat{n}(\C),\Pi,\phi))\).
\end{proposition}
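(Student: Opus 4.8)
The strategy is to prove that the pair $(\textup{Bos}(\Aut(\Mat{n}(\C),\Pi,\phi)),\widetilde{\eta})$ is the universal (initial) object of the category $\mathcal{C}(\widetilde{\Mat{n}(\C)\rtimes\Z})$; since an initial object is unique up to a canonical isomorphism and $\textup{QISO}(\widetilde{\Mat{n}(\C)\rtimes\Z})$ is by definition this initial object, the proposition follows. By Example~\ref{ex:crossed_filt} the pair already lies in $\mathcal{C}(\widetilde{\Mat{n}(\C)\rtimes\Z})$, so only the universal property has to be established: writing $\Bialg{C}=\textup{Bos}(\Aut(\Mat{n}(\C),\Pi,\phi))$, for every object $(\G[H]=\Bialg{Q},\varrho)$ of the category there should be a unique morphism of compact quantum groups $h\colon\Bialg{C}\to\Bialg{Q}$ with $(\Id\otimes h)\circ\widetilde{\eta}=\varrho$.

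First I would read off the structure of an arbitrary $\varrho$ from the filtration of Example~\ref{ex:crossed_filt}. Filtration preservation forces $\varrho(v^{r})\in\textup{Span}\{v^{r}\}\otimes_{\textup{alg}}Q$, so $\varrho(v)=v\otimes q$ for some unitary $q\in Q$; and it forces $\varrho$ to preserve the subspace of traceless matrices, hence to restrict to a $\phi$-preserving action $\varrho_{0}\colon\Mat{n}(\C)\to\Mat{n}(\C)\otimes Q$, $\varrho_{0}(E_{ij})=\sum_{k,l}E_{kl}\otimes q^{kl}_{ij}$ (the diagonal being recovered from $E_{ii}=E_{ij}E_{ji}$). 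Applying the action identity of $\varrho$ to $v$ and to each $E_{ij}$ gives $\Comult[Q](q)=q\otimes q$ and $\Comult[Q](q^{ij}_{kl})=\sum_{r,s}q^{ij}_{rs}\otimes q^{rs}_{kl}$; applying multiplicativity of $\varrho$ to the crossed-product relation~\eqref{eq:cross-prod-univ} gives the commutation relation $q\,q^{ij}_{kl}\,q^{*}=\zeta^{(d_{i}-d_{j})-(d_{k}-d_{l})}q^{ij}_{kl}$; and, $\varrho_{0}$ being a $\phi$-preserving action of the ordinary compact quantum group $\Bialg{Q}$ on $\Mat{n}(\C)$, Wang's theorem~\cite{W1998}*{Theorem~4.1} shows the $q^{ij}_{kl}$ satisfy the relations~\eqref{eq:cond-2}--\eqref{eq:cond-6} with $\zeta=1$.

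Then I would construct $h$ from the universal property of $C$ in Theorem~\ref{the:Boson}, by sending the generators to $h(z)\defeq q$ and $h(u^{ij}_{kl})\defeq q^{d_{j}-d_{i}}q^{ij}_{kl}$; the twisting factor is chosen exactly so that $(\Id\otimes h)\circ\widetilde{\eta}$ agrees with $\varrho$ on $v$ and on the $E_{ij}$ (recall $\widetilde{\eta}(E_{ij})=\sum_{k,l}E_{kl}\otimes z^{d_{k}-d_{l}}u^{kl}_{ij}$). One then checks that these elements of $Q$ satisfy the presentation of $C$: $q$ is unitary; the relation $zu^{ij}_{kl}z^{*}=\zeta^{d_{i}-d_{k}+d_{l}-d_{j}}u^{ij}_{kl}$ becomes precisely the commutation relation of the previous paragraph once the factor $q^{d_{j}-d_{i}}$ is absorbed; and the relations~\eqref{eq:cond-2}--\eqref{eq:cond-6} for the twisted elements follow from the $\zeta=1$ relations for the $q^{ij}_{kl}$ together with that commutation relation, by the very computation performed (in the opposite direction) in the proof of Theorem~\ref{the:univ_obj}, with~\eqref{eq:bos_gen_comm} there replaced by our commutation relation. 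Hence $h\colon C\to Q$ is a well-defined unital \Star{}homomorphism. That $h$ is a morphism of compact quantum groups, $(h\otimes h)\circ\Comult[C]=\Comult[Q]\circ h$, is verified on the generators using~\eqref{eq:comult-bos}, $\Comult[Q](q)=q\otimes q$ and $\Comult[Q](q^{ij}_{kl})=\sum_{r,s}q^{ij}_{rs}\otimes q^{rs}_{kl}$, the various powers of $q$ on the two sides cancelling identically. Finally $(\Id\otimes h)\circ\widetilde{\eta}=\varrho$ holds on $v$ and on the $E_{ij}$ by construction, hence on all of $\Mat{n}(\C)\rtimes\Z$, and uniqueness is immediate since $h(z)$ and $h(u^{ij}_{kl})$ are forced by the values of $(\Id\otimes h)\circ\widetilde{\eta}$ on $v$ and $E_{ij}$.

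The only step of real substance is the reduction in the second paragraph: using that $\widetilde{\Mat{n}(\C)\rtimes\Z}$ is the orthogonal filtration of Example~\ref{ex:crossed_filt} to see that a filtration preserving action of a compact quantum group on $\Mat{n}(\C)\rtimes\Z$ is exactly the data of a grouplike unitary, a $\phi$-preserving action on $\Mat{n}(\C)$, and their compatibility commutation relation. Once this is in place, everything else is the bosonisation dictionary already exploited in Theorem~\ref{the:univ_obj}. I expect the $\zeta$-exponent bookkeeping in the verification that $q^{d_{j}-d_{i}}q^{ij}_{kl}$ obeys~\eqref{eq:cond-2}--\eqref{eq:cond-6} and is compatible with $\Comult[C]$ to be the most error-prone part, though it runs entirely parallel to computations already carried out in the paper.
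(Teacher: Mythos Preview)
Your proposal is correct and follows essentially the same route as the paper's proof: extract from filtration preservation a grouplike unitary and matrix coefficients of a \(\phi\)-preserving action on \(\Mat{n}(\C)\), invoke Wang's theorem for the \(\zeta=1\) relations, derive the commutation relation from~\eqref{eq:cross-prod-univ}, and then twist the coefficients by powers of the unitary to match the presentation of \(C\) in Theorem~\ref{the:Boson}. The only difference is presentational: the paper first recognises \(\textup{QISO}(\widetilde{\Mat{n}(\C)\rtimes\Z})\) as the universal \(\Cst\)-algebra \(\mathcal{Q}\) on the generators \(u,\{q^{kl}_{ij}\}\) and then performs the change of generators \(w^{kl}_{ij}=u^{d_l-d_k}q^{kl}_{ij}\), whereas you construct the morphism \(h\colon C\to Q\) directly and verify the intertwining and uniqueness; these are two phrasings of the same argument.
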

\begin{proof}
Let~\(\Qgrp{H}{Q}\) be a compact quantum group. Suppose~\((\G[H],\varrho)\) is an 
object of~\(\mathcal{C}(\widetilde{\Mat{n}(\C)\rtimes\Z})\). Then~\(\varrho\) preserves the 
filtration described above. There exists a unitary 
\(u\in Q\) such that \(\varrho(v^{r})=v^{r}\otimes u^{r}\) and~\(\Comult[Q](u^{r})=u^{r}\otimes u^{r}\) 
for all~\(r\in\Z\). 
On the other hand, the restriction of~\(\varrho\) on~\(E_{ij}\) is given by 
\(\varrho(E_{ij})=\sum_{k,l=1}^{n}E_{kl}\otimes q^{kl}_{ij}\), where~\(q^{kl}_{ij}\in Q\) for all 
\(1\leq i,j,k,l\leq n\). Since~\(\varrho\) preserves~\(\tau\), the restriction 
\(\varrho|_{\Mat{n}(\C)}\) preserves~\(\phi\). By~\cite{W1998}*{Theorem 4.1}, the elements 
\(q^{ij}_{kl}\) staisfy~\eqref{eq:cond-2}-\eqref{eq:cond-6} with~\(\zeta=1\) and 
\[
  \Comult[Q](q_{ij}^{kl})=\sum_{r,s=1}^{n}q^{kl}_{rs}\otimes q^{rs}_{ij}, 
  \qquad
  \text{for all~\(1\leq i,j,k,l\leq n\).}
\]
Using the commutation relation~\eqref{eq:cross-prod-univ} and the condition that~\(\widetilde{\eta}\) is a 
\Star{}homomorphism we compute 
\[
  \zeta^{d_{i}-d_{j}}\varrho(E_{ij}) 
  =\varrho(vE_{ij}v^{*})
  =\sum_{k,l=1}^{n}vE_{kl}v^{*}\otimes uq^{kl}_{ij}u^{*}
  =\sum_{k,l=1}^{n}E_{kl}\otimes \zeta^{d_{k}-d_{l}}uq^{kl}_{ij}u^{*}.
\]
\sloppy
This implies~\(uq^{kl}_{ij}u^{*}=\zeta^{d_{k}-d_{i}+d_{j}-d_{l}}q_{ij}^{kl}\) for all~\(1\leq i,j,k,l\leq n\). Therefore, 
\(\textup{QISO}(\widetilde{\Mat{n}(\C)})\cong\Bialg{\mathcal{Q}}\), where~\(\mathcal{Q}\) is the universal 
\(\Cst\)\nb-algebra generated by the elements~\(u\), \(\{q^{kl}_{ij}\}_{i,j,k,l=1}^{n}\) and~\(\Comult[\mathcal{Q}]\) is 
the restriction of~\(\Comult[Q]\) to \({\mathcal{Q}}\). Then it is a routine check that 
\(\mathcal{Q}\) is also generated by elements~\(u\) and 
\(w^{kl}_{ij}\defeq u^{d_{l}-d_{k}}q^{kl}_{ij}\) for all~\(1\leq i,j,k,l\leq n\) such that~\(u\) is unitary, \(\{w^{kl}_{ij}\}\) satisfy 
the relations~\eqref{eq:cond-2}-\eqref{eq:cond-6} and~\(uw^{kl}_{ij}u^{*}=\zeta^{d_{k}-d_{i}+d_{j}-d_{l}}
w^{kl}_{ij}\). Moreover, \(\Comult[\mathcal{Q}](w^{kl}_{ij})=\sum_{r,s=1}^{n}w^{kl}_{rs}\otimes u^{d_{r}-d_{k}+d_{l}-d_{s}}w^{rs}_{ij}\) 
for all~\(1\leq i,j,k,l\leq n\). Thus~\(\Bialg{\mathcal{Q}}\) is the bosonisation~\(\Bialg{C}\) of~\(\Aut(\Mat{n}(\C),\Pi,\phi)\) in 
Theorem~\ref{the:Boson}.
\end{proof}
\sloppy
The action~\(\Pi(E_{ij})=E_{ij}\otimes z^{d_{i}-d_{j}}\) of~\(\T\) on~\(\Mat{n}(\C)\) preserves~\(\phi\), it also preserves~\(\widetilde{\Mat{n}(\C)}\) in Example~\ref{ex:Mat_filt}. Thus~\((\Bialg{\Cont(\T)},\Pi)\) is an object of~\(\mathcal{C}(\widetilde{\Mat{n}(\C)})\). Hence, there exists a unique compact quantum group homomorphism \(f\colon \textup{QISO}(\widetilde{\Mat{n}(\C)})\to\Cont(\T)\) such that~\(f(q_{ij}^{kl})=\delta_{k,i}\cdot \delta_{j,l}\cdot z^{d_{i}-d_{j}}\), where~\(\{q^{ij}_{kl}\}_{1\leq i,j,k,l\leq n}\) is a generating set of Wang's \(\Aut(\Mat{n}(\C),\phi)\).

Now we view the \(\Rmattxt\)\nb-matrix in~\eqref{eq:Rmat} as a unitary element of the multiplier algebra of~\(\Contvin(\Z)\otimes\Contvin(\Z)\). Then~\(\bichar\defeq (\Id_{\Contvin(\Z)}\otimes \hat{f})\Rmat\) is a bicharacter, where~\(\hat{f}\) is the dual quantum group homomorphism from \(\Contvin(\Z)\) to the dual of~\(\widetilde{\textup{QISO}(\Mat{n}(\C))}\) (see~\cite{MRW2012}*{Section 3 \& 4}). 
By~\cite{BMRS2018}*{Theorem 5.1}, 
\(\textup{QISO}(\widetilde{\Mat{n}(\C)\rtimes\Z})\cong\mathfrak{D}_{\bichar}\), 
where~\(\mathfrak{D}_{\bichar}\) is the generalised Drinfeld's double of~\(\textup{QISO}(\widetilde{\Mat{n}(\C)})\) and~\(\textup{QISO}(\widetilde{\Cont(\T)})\) (see~\cite{R2015}). Combining it with Proposition~\ref{prop:Boson_isometry} we obtain the following corollary.
\begin{corollary}
 \label{cor:bos_dinf}
  \(\mathfrak{D}_{\bichar}\cong \textup{Bos}(\Aut(\Mat{n}(\C),\Pi,\phi))\).
\end{corollary}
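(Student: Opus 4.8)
The plan is to obtain the isomorphism by composing the two identifications that are now in place. By Proposition~\ref{prop:Boson_isometry} we already know \(\textup{QISO}(\widetilde{\Mat{n}(\C)\rtimes\Z})\cong\textup{Bos}(\Aut(\Mat{n}(\C),\Pi,\phi))\), so it suffices to exhibit an isomorphism \(\textup{QISO}(\widetilde{\Mat{n}(\C)\rtimes\Z})\cong\mathfrak{D}_{\bichar}\) and then invoke transitivity of \(\cong\). For the latter I would appeal to~\cite{BMRS2018}*{Theorem~5.1} applied to the orthogonal filtrations \(\widetilde{\Mat{n}(\C)}\) of Example~\ref{ex:Mat_filt} and \(\widetilde{\Cont(\T)}\) of Example~\ref{ex:Circle_filt}, together with the bicharacter \(\bichar=(\Id_{\Contvin(\Z)}\otimes\hat f)\Rmat\) built above from the \(\Rmattxt\)\nb-matrix~\eqref{eq:Rmat} and the dual of the homomorphism \(f\colon\textup{QISO}(\widetilde{\Mat{n}(\C)})\to\Cont(\T)\); by~\cite{R2015}, \(\mathfrak{D}_{\bichar}\) is by definition the generalised Drinfeld double of \(\textup{QISO}(\widetilde{\Mat{n}(\C)})\) and \(\textup{QISO}(\widetilde{\Cont(\T)})\) along this \(\bichar\).

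Before quoting~\cite{BMRS2018}*{Theorem~5.1} I would check that our crossed product fits its hypotheses. First, the \(\Z\)\nb-action defining \(\Mat{n}(\C)\rtimes\Z\) must be the one induced by \(\bichar\): using the formula \(f(q^{kl}_{ij})=\delta_{k,i}\delta_{j,l}z^{d_i-d_j}\) and the functoriality of the bicharacter construction of~\cite{MRW2012}*{Sections~3 \& 4}, pairing \(\Rmat\) through \(\hat f\) returns exactly the grading automorphism \(\Pi_{\zeta^{-1}}\), as required. Second, I would verify that the orthogonal filtration produced from \(\widetilde{\Mat{n}(\C)}\) and \(\widetilde{\Cont(\T)}\) by~\cite{BMRS2018}*{Proposition~4.9} agrees with \(\widetilde{\Mat{n}(\C)\rtimes\Z}\) of Example~\ref{ex:crossed_filt}; this is a direct comparison of the two spanning families \(\{v^r\cdot 1_{\Mat{n}(\C)}\}_{r\in\Z}\cup\{v^s E_{ij}\}_{s\in\Z,\,i\neq j}\). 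Since \(\Z\) is amenable, the reduced and full crossed products coincide, so the identification \(\Cont(\T)\boxtimes_\zeta\Mat{n}(\C)\cong\Mat{n}(\C)\rtimes\Z\) used throughout is unambiguous.

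I do not expect a genuine difficulty here: all the substantive content was discharged in Example~\ref{ex:crossed_filt} and Proposition~\ref{prop:Boson_isometry}. The one point deserving care is the bookkeeping --- confirming that the bicharacter coming from~\eqref{eq:Rmat} and the \(\Z\)\nb-grading of \(\widetilde{\Mat{n}(\C)}\) is \emph{exactly} the bicharacter for which~\cite{BMRS2018}*{Theorem~5.1} presents \(\textup{QISO}(\widetilde{\Mat{n}(\C)\rtimes\Z})\) as a Drinfeld double, rather than a twist or dual of it. Once these conventions are matched, chaining the two isomorphisms gives \(\mathfrak{D}_{\bichar}\cong\textup{Bos}(\Aut(\Mat{n}(\C),\Pi,\phi))\).
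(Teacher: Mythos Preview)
Your proposal is correct and follows essentially the same approach as the paper: the paper obtains the corollary by combining Proposition~\ref{prop:Boson_isometry} with~\cite{BMRS2018}*{Theorem~5.1}, exactly as you outline. The additional bookkeeping checks you mention (matching the bicharacter conventions, the filtration from~\cite{BMRS2018}*{Proposition~4.9}, and amenability of~\(\Z\)) are reasonable due-diligence steps that the paper leaves implicit.
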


\begin{remark}
 \label{rem:inhomogeneous}
 For~\(i,j,k,l\in \{1,\cdots , n\}\), the elements  
  \(w_{ij}^{kl}\) generate the copy of~\(\Aut(\Mat{n}(\C),\Pi,\phi)\) inside~\(\textup{Bos}(\Aut(\Mat{n}(\C),\Pi,\phi))\), whereas the elements~\(q_{ij}^{kl}=u^{d_{k}-d_{l}}w^{kl}_{ij}\) generate the copy of \(\Aut(\Mat{n}(\C),\phi)\) inside~\(\mathfrak{D}_{\bichar}\). 
  
  Then~\(q_{ij}^{kl}=u^{d_{k}-d_{l}}w_{ij}^{kl}\in C\) 
  is a homogeneous element of degree~\((d_{k}-d_{l})+(d_{k}-d_{i}+d_{j}-d_{l})\) for the diagonal action~\(\Comult[\Cont(\T)]\bowtie\alpha\). Similarly, an element~\(v^{r}E_{ij}\in\Mat{n}(\C)\rtimes\Z\cong\Cont(\T)\boxtimes_{\zeta}
  \Mat{n}(\C)\) is homogeneous of degree~\(r+d_{i}-d_{j}\) for the 
  diagonal action~\(\Comult[\Cont(\T)]\bowtie\Pi\). 
  
  The restriction of the action~\(\varrho\) on the factors~\(\Cont(\T)\) and 
  \(\Mat{n}(\C)\) in the above Proposition~\ref{prop:Boson_isometry} is precisely 
  the action of~\(\mathfrak{D}_{\bichar}=\Bialg{\mathcal{Q}}\) on those factors 
  (see~\cite{BMRS2018}*{Theorem 3.5}). The latter one is given by
  \[
    \varrho|_{\Mat{n}(\C)}(E_{ij}) 
    =\sum_{k,l=1}^{n}E_{kl}\otimes q^{kl}_{ij}
    =\sum_{k,l=1}^{n}E_{kl}\otimes u^{d_{k}-d_{l}}w^{kl}_{ij}.
  \]
  However, \(\varrho|_{\Mat{n}(\C)}\) is not~\(\T\)\nb-equivariant for the actions~\(\Pi\) and 
  \(\Comult[\Cont(\T)]\bowtie\alpha\). This happens because~\(\Aut(\Mat{n}(\C),\phi)\) fails to capture \(\T\)\nb-homomgeneous compact quantum symmetries of the dynamical system~\((\Mat{n}(\C),\Z,\Pi_{\zeta^{-1}})\).

  On the other hand, if we realise~\(\varrho\) as an action of~\(\textup{Bos}(\Aut(\Mat{n}(\C),\Pi,\phi)\) on~\(\Mat{n}(\C)\rtimes\Z\), then it coincides with~\(\eta\) in Proposition~\ref{prop:bos_act_lift}. Consequently,  
  \(\varrho |_{\Mat{n}(\C)}\) gets identified with~\(\eta\) in Proposition~\ref{prop:action_qnt_on_Matx}. In this sense, \(\Aut(\Mat{n}(\C),\Pi,\phi)\) is the \(\T\)\nb-homogeneous quantum symmetry group of the \(\Cst\)\nb-dynamical system~\((\Mat{n}(\C),\Z,\Pi_{\zeta^{-1}})\) and \(\textup{Bos}(\Aut(\Mat{n}(\C),\Pi,\phi))\) is the inhomogeneous quantum symmetry group of the same system.
 \end{remark}
 \section{Towards the direct sum of matrix algebras}
  \label{sec:direct_sum}
  For a fixed~\(m\in\N\), let~\(n_{1},n_{2},\cdots , n_{m}\in\N\). Then~\(D=\oplus_{x=1}^{m}\Mat{n_{x}}(\C)\) is the universal \(\Cst\)\nb-algebra generated by~\(\{E_{ij,x}\}_{1\leq i,j\leq n_{x}, 1\leq x\leq m}\) subject to the following conditions:
  \begin{equation}
   \label{eq:mat-dir-sum}
   E_{ij,x}E_{kl,y}=\delta_{j,k}\cdot\delta_{x,y}E_{il,x}, 
   \qquad 
   E_{ij,x}^{*}=E_{ji,x},
   \qquad 
   \sum_{x=1}^{m}\sum_{i=1}^{n_{x}}E_{ii,x}=1 .
  \end{equation}
  For each~\(x\in\{1,\cdots ,m\}\), we fix \(d^{x}_{1}\leq d^{x}_{2}\leq \cdots \leq d^{x}_{n_{x}}\in\Z\) and define an action~\(\Pi^{x}\) of~\(\T\) 
  on the~\(x^\text{th}\) component of~\(D\) by~\(\Pi^{x}_{z}(E_{ij,p})\defeq z^{d_{i}^{x}-d_{j}^{x}}E_{ij,x}\) for~\(z\in\T\). Extending them, we obtain the following \(\T\)\nb-action on~\(D\): \(\overline{\Pi}_{z}(\oplus_{x=1}^{m} M_{p})=\oplus_{x=1}^{m}\Pi^{x}_{z}(M_{x})\) for all~\(M_{x}\in\Mat{n_{x}}(\C)\), and \(z\in\T\). The map~\(\phi(E_{ij,x})\defeq \delta_{i,j}\) is a positive linear functional on~\(D\) and~\(\overline{\Pi}\) preserves~\(\phi\).
  \begin{theorem}
   \label{the:Aut_dirsum}
   Consider the monoidal category~\((\Cstcat(\T),\boxtimes_{\zeta})\) for a 
   fixed~\(\zeta\in\T\). Let~\(A\) be the universal \(\Cst\)\nb-algebra with generators 
   \(u^{ij}_{kl,xy}\) for \(1\leq i,j\leq n_{x},1\leq k,l\leq n_{y}, 1\leq x,y \leq m\) 
   and the following relations:
   \begin{align}
   \nonumber
 &\sum_{t=1}^{n_{x}}(\zeta^{d_{t}^{x}(d^{y}_{k}-d^{x}_{i}+d^{x}_{t}-d^{y}_{s})}u^{it}_{ks,xy})\cdot 
 (\zeta^{d^{x}_{j}(d^{w}_{m}-d^{x}_{t}+d^{x}_{j}-d^{w}_{l})}u^{tj}_{ml,xw})\\ 
  \label{eq:dirsum_cond1}
 &=\delta_{y,w}\cdot \delta_{s,m}\cdot (\zeta^{d^{x}_{j}(d^{y}_{k}-d^{x}_{i}+d^{x}_{j}-d^{y}_{l}}u^{ij}_{kl,xy}),
 \\ \nonumber
 &\text{for all \(1\leq k,s\leq n_{y}\), \(1\leq m,l\leq n_{w}\), \(1\leq i,j\leq n_{x}\), 
 \(1\leq x,y,w\leq m\)\textup{;}} \\
 \nonumber
 & \sum_{t=1}^{n_{x}} (\zeta^{d^{y}_{s}(d^{x}_{k}-d^{y}_{i}+d^{y}_{s}-d^{x}_{t})}u^{is}_{kt,yx})\cdot 
 (\zeta^{d^{w}_{j}(d^{x}_{t}-d^{w}_{m}+d^{w}_{j}-d^{x}_{l})}u^{mj}_{tl,wx})\\ 
  \label{eq:dirsum_cond2}
 &=\delta_{y,w}\cdot \delta_{s,m}\cdot (\zeta^{d^{y}_{j}(d^{x}_{k}-d^{y}_{i}+d^{y}_{j}-d^{x}_{l})}u^{ij}_{kl,yx}),
 \\ \nonumber
& \text{for all \(1\leq i,s \leq n_{y}\), \(1\leq m,j\leq n_{w}\), \(1\leq k,l\leq n_{x}\), 
\(1\leq x,y,w\leq m\)\textup{;}} \\
 \label{eq:dirsum_cond3}
  & u^{ij}_{kl,xy}{ }^{*}
   = \zeta^{(d^{x}_{i}-d^{x}_{j})(d^{y}_{l}-d^{x}_{j}+d^{x}_{i}-d^{y}_{k})}(u^{ji}_{lk,xy}),
   \\ \nonumber 
  & \text{for all~\(1\leq i,j \leq n_{x}\), \(1\leq k,l\leq n_{y}\), \(1\leq x,y\leq m\)\textup{;}} \\
  \label{eq:dirsum_cond4}
  & \sum_{y=1}^{m}\sum_{r=1}^{n_{y}}u^{ij}_{rr,xy}
  =\delta_{i,j},
  \quad \text{for all~\(1\leq i,j\leq n_{x}\), \(1\leq x\leq m\) \textup{;}} \\
  \label{eq:dirsum_cond5}
  & \sum_{x=1}^{m}\sum_{r=1}^{n_{x}}u^{rr}_{kl,xy}
  =\delta_{k,l},
  \quad\text{for all~\(1\leq k,l\leq n_{y}\), \(1\leq y\leq m\) \textup{.}}
\end{align}
 There exists a unique continuous action~\(\alpha\) of~\(\T\) on~\(A\) satisfying
   \begin{equation}
     \label{eq:dirsum_cond6}
     \alpha_{z}(u^{ij}_{kl,xy})=z^{d_{k}^{y}-d_{i}^{x}+d_{j}^{x}-d_{l}^{y}} u^{ij}_{kl,xy},
     \qquad\text{for all~\(z\in\T\).}
   \end{equation}	
   Thus~\((A,\alpha)\) is an object of~\(\Cstcat(\T)\). Moreover, there exists a unique 
   \(\Comult[A]\in\Mor^{\T}(A,A\boxtimes_{\zeta}A)\) such that~\((A,\alpha,\Comult[A])\) is a braided compact quantum group over~\(\T\). We denote it by~\(\Aut(D,\overline{\Pi},\phi)\). 
 The map~\(\eta\colon D\to D\boxtimes_{\zeta}A\), defined by
 \[
   \eta(E_{ij,x})=\sum_{y=1}^{m}\sum_{r,s=1}^{n_{y}}E_{rs,y}\boxtimes_{\zeta}u^{rs}_{ij,yx} 
\]
 extends to a faithful \(\phi\)\nb-preserving action of~\(\Aut(D,\overline{\Pi},\phi)\) on~\((D,\overline{\Pi})\).
  \end{theorem}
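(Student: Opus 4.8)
The plan is to reproduce, block by block, the arguments used for a single matrix algebra in Theorem~\ref{the:qnt_aut_matrx} and Proposition~\ref{prop:action_qnt_on_Matx}; the direct sum only adds the extra block labels~\(x,y,w\), and these are carried along by the Kronecker deltas~\(\delta_{x,y}\), \(\delta_{y,w}\) occurring in~\eqref{eq:dirsum_cond1}--\eqref{eq:dirsum_cond5}. First I would fix, for each~\(x\), a copy~\(W_{x}=\C^{n_{x}}\) with the unitary \(\T\)\nb-representation~\(\rho^{x}\) having eigenbasis~\(\{e^{x}_{1},\dots,e^{x}_{n_{x}}\}\) and~\(\rho^{x}_{z}(e^{x}_{i})=z^{d^{x}_{i}}e^{x}_{i}\); identifying~\(\Mat{n_{x}}(\C)\cong\textup{End}(W_{x})\) via~\(E_{ij,x}e^{x}_{k}=\delta_{j,k}e^{x}_{i}\) recovers~\(\Pi^{x}\), so that~\(D=\bigoplus_{x}\textup{End}(W_{x})\) carries~\(\overline{\Pi}\). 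Then I would let~\(\mathcal{A}\) be the universal unital~\Star{}algebra on the generators~\(u^{ij}_{kl,xy}\) subject to~\eqref{eq:dirsum_cond1}--\eqref{eq:dirsum_cond6}, and assemble the fundamental corepresentation
\[
  u=\sum_{x,y=1}^{m}\ \sum_{i,j=1}^{n_{x}}\ \sum_{r,s=1}^{n_{y}}
    \ket{E_{rs,y}}\bra{E_{ij,x}}\otimes u^{rs}_{ij,yx}\ \in\ \textup{End}(D)\otimes\mathcal{A},
\]
in which~\(\ket{E_{rs,y}}\bra{E_{ij,x}}\) denotes the rank one operator on~\(D\) carrying~\(E_{ij,x}\) to~\(E_{rs,y}\); this is the obvious analogue of~\eqref{eq:brd_fund-rep}.

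Next I would rewrite, by means of~\eqref{eq:dirsum_cond6}, the relation~\eqref{eq:dirsum_cond1} in the form
\[
  \sum_{t=1}^{n_{x}}\alpha_{\zeta^{d^{x}_{t}}}\!\bigl(u^{it}_{ks,xy}\bigr)\,
    \alpha_{\zeta^{d^{x}_{j}}}\!\bigl(u^{tj}_{ml,xw}\bigr)
  =\delta_{y,w}\,\delta_{s,m}\,\alpha_{\zeta^{d^{x}_{j}}}\!\bigl(u^{ij}_{kl,xy}\bigr)
\]
and the analogous rewriting of~\eqref{eq:dirsum_cond2}, exactly as~\eqref{eq:cond-2} became~\eqref{eq:cond-2eqv}. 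Together with~\eqref{eq:dirsum_cond3},~\eqref{eq:dirsum_cond4},~\eqref{eq:dirsum_cond5} and~\(\sum_{x,i}E_{ii,x}=1_{D}\), the computations of~\(u^{*}u\) and~\(uu^{*}\) then run verbatim as in Theorem~\ref{the:qnt_aut_matrx}, the block deltas merely restricting the summations, and give~\(u^{*}u=uu^{*}=1\). Hence any~\(\Cst\)\nb-seminorm on~\(\mathcal{A}\) satisfies~\(\norm{u^{ij}_{kl,xy}}\le 1\), so that~\(A\) is the completion of~\(\mathcal{A}\) for the largest one; and checking that the elements~\(\alpha_{z}(u^{ij}_{kl,xy})\) of~\eqref{eq:dirsum_cond6} again satisfy~\eqref{eq:dirsum_cond1}--\eqref{eq:dirsum_cond5} produces the unique continuous \(\T\)\nb-action~\(\alpha\), so that~\((A,\alpha)\) is an object of~\(\Cstcat(\T)\). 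I would then set
\[
  \Comult[A]\bigl(u^{ij}_{kl,xy}\bigr)
    =\sum_{z=1}^{m}\sum_{r,s=1}^{n_{z}}u^{ij}_{rs,xz}\boxtimes_{\zeta}u^{rs}_{kl,zy};
\]
since~\(\alpha\bowtie\alpha\) multiplies each summand by~\(z^{d^{y}_{k}-d^{x}_{i}+d^{x}_{j}-d^{y}_{l}}\), the map~\(\Comult[A]\) is \(\T\)\nb-equivariant on generators, and the commutation relation~\eqref{eq:comm-brd_tensor} together with the rewritten forms of~\eqref{eq:dirsum_cond1}--\eqref{eq:dirsum_cond2} lets one verify~\eqref{eq:dirsum_cond1}--\eqref{eq:dirsum_cond5} for~\(\{\Comult[A](u^{ij}_{kl,xy})\}\) by the computation displayed for~\(\Comult[A](u^{ij}_{kl})\) in Theorem~\ref{the:qnt_aut_matrx}, the intermediate block label~\(z\) now playing the role of the inner matrix index~\(t\). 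The universal property of~\(A\) extends~\(\Comult[A]\) to a \(\T\)\nb-equivariant unital~\Star{}homomorphism; coassociativity is routine, and the cancellation conditions~\eqref{eq:comult_cancel} follow by the argument of~\cite{KMRW2016}*{Section~4} used in Theorem~\ref{the:qnt_aut_matrx}, since unitarity of~\(u\) places every~\(u^{ij}_{kl,xy}\) and its adjoint in~\(S\defeq\{a\in A\mid a\boxtimes_{\zeta}1_{A}\in\Comult[A](A)(1\boxtimes_{\zeta}A)\}\), which is closed under products of homogeneous elements by~\cite{KMRW2016}*{Proposition~3.1}, hence dense. This yields the braided compact quantum group~\(\Aut(D,\overline{\Pi},\phi)=(A,\alpha,\Comult[A])\); uniqueness of~\(\Comult[A]\) holds since its values on the generators are forced by coassociativity and~\eqref{eq:dirsum_cond4}--\eqref{eq:dirsum_cond5}.

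It remains to check~\(\eta\). The element~\(E_{rs,y}\boxtimes_{\zeta}u^{rs}_{ij,yx}\) is homogeneous of degree~\((d^{y}_{r}-d^{y}_{s})+(d^{x}_{i}-d^{y}_{r}+d^{y}_{s}-d^{x}_{j})=d^{x}_{i}-d^{x}_{j}=\textup{deg}(E_{ij,x})\), so~\(\eta\) is \(\T\)\nb-equivariant; the identity~\(\eta(E_{ij,x}^{*})=\eta(E_{ij,x})^{*}\) uses~\eqref{eq:dirsum_cond3}; multiplicativity~\(\eta(E_{ij,x})\eta(E_{kl,y})=\delta_{j,k}\,\delta_{x,y}\,\eta(E_{il,x})\) uses~\eqref{eq:comm-brd_tensor} and the rewritten~\eqref{eq:dirsum_cond1}, the block delta~\(\delta_{x,y}\) being supplied by its right-hand side; and~\(\sum_{x}\sum_{i=1}^{n_{x}}\eta(E_{ii,x})=1_{D}\boxtimes_{\zeta}1_{A}\) uses~\eqref{eq:dirsum_cond4}, all exactly as in Proposition~\ref{prop:action_qnt_on_Matx}. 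The comodule identity in Definition~\ref{def:br_act}(1) is immediate from the formula for~\(\Comult[A]\), while the Podle\'s condition in Definition~\ref{def:br_act}(2) follows from unitarity of~\(u\) together with~\eqref{eq:dirsum_cond3}: these show that each~\(E_{kl,y}\boxtimes_{\zeta}1_{A}\) lies in~\(\eta(D)(1_{D}\boxtimes_{\zeta}A)\), and then the density argument of Proposition~\ref{prop:action_qnt_on_Matx} applies. Preservation of~\(\phi\) is the identity~\((\phi\boxtimes_{\zeta}\Id_{A})\eta(E_{ij,x})=\sum_{y=1}^{m}\sum_{r=1}^{n_{y}}u^{rr}_{ij,yx}=\delta_{i,j}=\phi(E_{ij,x})\), which is~\eqref{eq:dirsum_cond5}; and faithfulness follows as in Proposition~\ref{prop:action_qnt_on_Matx} by using rank one functionals on the first leg, built from the matrix units of~\(D\), to isolate scalar multiples of each~\(u^{rs}_{ij,yx}\) in the slices of~\(\eta\). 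The only genuine obstacle throughout is the bookkeeping: keeping the block\nb-dependent exponents~\(d^{x}_{i}\) and the block Kronecker deltas straight in the unitarity computation and in the verification of~\eqref{eq:dirsum_cond1}--\eqref{eq:dirsum_cond5} for~\(\Comult[A]\) and~\(\eta\); conceptually nothing new beyond Theorem~\ref{the:qnt_aut_matrx} and Proposition~\ref{prop:action_qnt_on_Matx} is required.
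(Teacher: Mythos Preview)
Your proposal is correct and follows essentially the same approach as the paper: both proofs declare at the outset that the arguments of Theorem~\ref{the:qnt_aut_matrx} and Proposition~\ref{prop:action_qnt_on_Matx} carry over with the extra block labels, assemble the fundamental corepresentation~\(u\) in~\(\textup{End}(D)\otimes\mathcal{A}\), use the relations to show~\(u\) is unitary so that~\(A\) is the universal \(\Cst\)\nb-completion, and then run the remaining verifications (existence of~\(\alpha\), definition and properties of~\(\Comult[A]\) via the formula~\eqref{eq:brd_comult_dirsum}, and the checks for~\(\eta\)) by direct transcription from the single-block case. One cosmetic point: avoid using~\(z\) as the intermediate block index in your comultiplication formula, since~\(z\) already denotes a generic element of~\(\T\); the paper uses~\(w\) there.
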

  \begin{proof}
  We will essentially use the same methods as in Section~\ref{sec:act_brd_cpt} 
  to prove this result. Let~\(\mathcal{A}\) be the \Star{}algebra generated by the 
  elements~\(u^{ij}_{kl,xy}\) satisfying~\eqref{eq:dirsum_cond1}-\eqref{eq:dirsum_cond5}.
  We identify~\(\textup{End}(D)\) with \(V\defeq\oplus_{x,y=1}^{m}\textup{End}(\Mat{n_{x}\times n_{y}}(\C))\) via the vector space isomorphism 
  \(E_{ij,x}\otimes E_{kl,y}\to E_{ik}\otimes E_{jl}\otimes E_{xy}\). The \(\T\)\nb-action~\(\overline{\Pi}\) 
  on~\(D\) induces a~\(\T\) action on~\(V\) defined by 
  \(E_{ik}\otimes E_{jl}\otimes E_{xy}\mapsto 
  z^{d^{x}_{i}-d^{y}_{k}-d^{x}_{j}+d_{l}^{y}} E_{ik}\otimes E_{jl}\otimes E_{xy}\) for all~\(z\in\T\).
 Define~\(u\in V\otimes\mathcal{A}\) by 
  \[
    u=\sum_{x,y=1}^{m}\sum_{k,l=1}^{n_{y}}\sum_{i,j=1}^{n_{x}} 
    E_{ik}\otimes E_{jl}\otimes E_{xy}\otimes u^{ij}_{kl,xy}.
  \]
 Using the relations~\eqref{eq:dirsum_cond1}-\eqref{eq:dirsum_cond5}, 
 we can show that~\(u\) is unitary, and~\(A\) is the completion of~\(\mathcal{A}\) 
 with respect to the largest \(\Cst\)\nb-seminorm on~\(\mathcal{A}\).  
 Similarly, we can show that~\(\alpha\) in~\eqref{eq:dirsum_cond6} defines a continuous action of~\(\T\) on~\(A\), the following formulae define~\(\Comult[A]\):
  \begin{equation}
   \label{eq:brd_comult_dirsum}
   \Comult[A](u^{ij}_{kl,xy})
    =\sum_{w=1}^{m}\sum_{r,s=1}^{n_{w}}u^{ij}_{rs,xw}\boxtimes_{\zeta} u^{rs}_{kl,wy},
 \qquad\text{\(1\leq i,j,k,l\leq n\),}
\end{equation}
such that \(\Aut(D,\overline{\Pi},\phi)=(A,\alpha,\Comult[A])\) is a 
braided compact quantum group over~\(\T\), and  \(\eta\) defines a faithful \(\phi\)\nb-preserving action of~\(\Aut(D,\overline{\Pi},\phi)\) on~\((D,\overline{\Pi})\).
 \end{proof}  
 \begin{remark}
  \label{rem:comm_finpt}
   In particular, if~\(m=1\) then the Theorem~\ref{the:Aut_dirsum} above reduces to 
 Theorem~\ref{the:qnt_aut_matrx}. On the other hand, 
 if~\(n_{x}=1\) for~\(1\leq x\leq m\), then~\(D\cong\C^{m}\) and 
 the action~\(\overline{\Pi}\) is trivial. 
 Define~\(a_{xy}=u^{11}_{11,xy}\) for all~\(1\leq x,y\leq m\). 
 Indeed, \(A\) is generated by the entires of the matrix~\(u=(a_{xy})\) 
 and the relations~\eqref{eq:dirsum_cond1}-\eqref{eq:dirsum_cond5} show 
 that~\(\Aut(D,\overline{\Pi},\phi)\) becomes isomorphic to the 
 Wang's quantum permutation group of~\(m\) points.
\end{remark}
 Following Example~\ref{ex:Mat_filt}, we observe that 
 \(\widetilde{D}=(D,\phi,\{D_{0}\}\cup\{D_{1,x}\}_{x=1}^{m})\) is 
 an orthogonal filtration for~\((D,\phi)\), where~\(D_{0}=\textup{Span}\{1_{D}\}\) 
 and~\(D_{1,x}=\textup{Span}\{E_{ij,x}\mid 1\leq i,j\leq n_{x}\text{ and } i\neq j\}\). 
 Combining it with Example~\eqref{ex:Circle_filt} and 
  we construct an orthogonal 
 filtration~\(\widetilde{D\rtimes\Z}\) on the crossed product~\(D\rtimes\Z\), for 
 the action of~\(\Z\) on~\(D\) generated by~\(\overline{\Pi}_{\zeta^{-1}}\), as in 
 Example~\ref{ex:Circle_filt}. Then by the 
 arguments analogous to those of Section~\ref{sec:Boson} and Section~\ref{sec:univ-prop}
we have the following theorem.
 \begin{theorem}
  \label{the:dirsum_bos_univ}
  Consider the categories~\(\mathcal{C}^{\T}(D,\overline{\Pi},\phi)\) and 
  \(\mathcal{C}(\widetilde{D\rtimes\Z})\). Then the following statements hold true.
  \begin{enumerate}
   \item \(\Aut(D,\overline{\Pi},\phi)\) is the universal initial object of the category 
   \(\mathcal{C}^{\T}(D,\overline{\Pi},\phi)\);
   \item Suppose~\(\Bialg{C}\) is the bosonisation of 
   \(\Aut(D,\overline{\Pi},\phi)\). Then~\(C\) is the universal~\(\Cst\)\nb-algebra 
   generated by the elements~\(z\) and~\(u^{ij}_{kl,xy}\) for 
   \(1\leq i,j\leq n_{x},1\leq k,l\leq n_{y}, 1\leq x,y \leq m\) subject to the 
   relations~\(z^{*}z=zz^{*}=1\), \eqref{eq:dirsum_cond1}-\eqref{eq:dirsum_cond5}, 
   and~\(zu^{ij}_{kl,xy}z^{*}=\zeta^{d^{x}_{i}-d^{y}_{k}-d^{y}_{l}+d^{x}_{j}} u^{ij}_{kl,xy}\). The comultiplication map~\(\Comult[C]\colon C\to C\otimes C\) is given by 
   \[
    \Comult[C](z)=z\otimes z, 
    \qquad 
    \Comult[C](u^{ij}_{kl,xy})
    =\sum_{w=1}^{m}\sum_{r,s=1}^{n_{w}}u^{ij}_{rs,xw}\otimes 
    z^{d^{w}_{r}-d^{x}_{i}+d^{x}_{j}-d^{w}_{s}}u^{rs}_{kl,wy}.
   \]
   We denote~\(\Bialg{C}\) by~\(\textup{Bos}(\Aut(D,\overline{\Pi},\phi))\).
   \item \(\Aut(D,\overline{\Pi},\phi)\rtimes\Z\) is the universal, initial object of the category 
   \(\mathcal{C}(\widetilde{D\rtimes\Z})\), that is, \(\textup{QISO}(\widetilde{D\rtimes\Z})\cong \textup{Bos}(\Aut(D,\overline{\Pi},\phi))\).
  \end{enumerate}
 \end{theorem}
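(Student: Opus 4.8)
The plan is to rerun, with only the extra block labels $x,y,w$ carried along, the arguments of Sections~\ref{sec:act_brd_cpt}--\ref{sec:univ-prop}. I would settle part~(2) first, since — as in the one-block case — it feeds into the proof of part~(1). Fix a faithful $\T$\nb-equivariant representation $A\hookrightarrow\Bound(\Hils[L])$. By~\eqref{eq:dirsum_cond6} the generator $u^{ij}_{kl,xy}$ is homogeneous of degree $d^{y}_{k}-d^{x}_{i}+d^{x}_{j}-d^{y}_{l}$, so conjugating the induced representation $A\hookrightarrow\Cont(\T)\boxtimes_{\zeta}A\subseteq\Bound(L^{2}(\T)\otimes\Hils[L])$ by the braiding unitaries $\Braiding{\Hils[L]}{L^{2}(\T)}$ and $\Dualbraiding{L^{2}(\T)}{\Hils[L]}$ attached to~\eqref{eq:Rmat} identifies $C=\Cont(\T)\boxtimes_{\zeta}A$ with the reduced crossed product $A\rtimes\Z$ for the $\Z$\nb-action generated by $\alpha_{\zeta^{-1}}$, exactly as in the proof of Theorem~\ref{the:Boson}; this yields the presentation of $C$ by a unitary $z$ and the $u^{ij}_{kl,xy}$ subject to $z^{*}z=zz^{*}=1$, the relations~\eqref{eq:dirsum_cond1}--\eqref{eq:dirsum_cond5}, and the stated commutation relation. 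For the comultiplication one puts $\Comult[C]=\Psi^{A,A}\circ(\Id_{\Cont(\T)}\boxtimes_{\zeta}\Comult[A])$ with $\Psi^{A,A}$ as in~\eqref{eq:Psi} for $B=D=A$ and $\beta=\gamma=\alpha$, and evaluates on the generators using $\Comult[\Cont(\T)](z)=z\otimes z$ and~\eqref{eq:brd_comult_dirsum}; the computation has the same shape as the one in Theorem~\ref{the:Boson}.

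For part~(1), let $\G=(B,\beta,\Comult[B])$ be an object of $\mathcal{C}^{\T}(D,\overline{\Pi},\phi)$ with action $\eta$, and read off the matrix coefficients $b^{rs}_{ij,yx}\in B$ from $\eta(E_{ij,x})=\sum_{y}\sum_{r,s}E_{rs,y}\boxtimes_{\zeta}b^{rs}_{ij,yx}$. As in the discussion preceding Theorem~\ref{the:univ_obj}, $\T$\nb-equivariance of $\eta$ forces each $b^{ij}_{kl,xy}$ to be homogeneous of degree $d^{y}_{k}-d^{x}_{i}+d^{x}_{j}-d^{y}_{l}$, and the comodule axiom gives the direct-sum analogue of~\eqref{eq:comult-rest}. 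Passing to the bosonisation $\Bialg{C'}$ of $\G$ and to the lifted ordinary action $\widetilde{\eta}$ of $\Bialg{C'}$ on $\Cont(\T)\boxtimes_{\zeta}D$ furnished by the direct-sum analogue of Proposition~\ref{prop:bos_act_lift}, the restriction $\widetilde{\eta}|_{D}$ is a $\phi$\nb-preserving ordinary action of $\Bialg{C'}$ on the finite dimensional $\Cst$\nb-algebra $D\cong\bigoplus_{x}\Mat{n_{x}}(\C)$. By Wang's theorem~\cite{W1998}*{Theorem 4.1} its matrix coefficients, which have the form $z^{d^{x}_{k}-d^{x}_{l}}\boxtimes_{\zeta}b^{kl}_{ij,xy}$, satisfy~\eqref{eq:dirsum_cond1}--\eqref{eq:dirsum_cond5} with $\zeta=1$; conjugating by $z$ and using the crossed-product commutation relation from part~(2) then shows that the $b^{ij}_{kl,xy}$ satisfy~\eqref{eq:dirsum_cond1}--\eqref{eq:dirsum_cond5}. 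The universal property of $A$ now produces a unique $\T$\nb-equivariant unital \Star{}homomorphism $f\colon A\to B$ with $f(u^{ij}_{kl,xy})=b^{ij}_{kl,xy}$; comparing~\eqref{eq:brd_comult_dirsum} with the analogue of~\eqref{eq:comult-rest} gives $(f\boxtimes_{\zeta}f)\circ\Comult[A]=\Comult[B]\circ f$, and comparing matrix coefficients shows that $f$ carries the canonical action of Theorem~\ref{the:Aut_dirsum} to $\eta$. Hence $f$ is the required arrow, and it is unique because the $u^{ij}_{kl,xy}$ generate $A$.

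For part~(3), I would first note, as in Example~\ref{ex:crossed_filt}, that $\Cont(\T)\boxtimes_{\zeta}D\cong D\rtimes\Z$ for the $\Z$\nb-action generated by $\overline{\Pi}_{\zeta^{-1}}$, presented by a unitary $v$ and the $E_{ij,x}$ subject to~\eqref{eq:mat-dir-sum} and $vE_{ij,x}v^{*}=\zeta^{d^{x}_{i}-d^{x}_{j}}E_{ij,x}$. Recall that $\widetilde{D\rtimes\Z}$ is the orthogonal filtration of $D\rtimes\Z$ built via \cite{BMRS2018}*{Proposition 4.9 \& Lemma 6.1} from $\widetilde{D}=(D,\phi,\{D_{0}\}\cup\{D_{1,x}\}_{x=1}^{m})$ and $\widetilde{\Cont(\T)}$ (Example~\ref{ex:Circle_filt}), exactly as in Example~\ref{ex:crossed_filt}; its filtration pieces are $\{v^{r}1_{D}\}_{r\in\Z}$ and $\{v^{s}D_{1,x}\}_{s\in\Z,\,1\leq x\leq m}$ and its state is $\phi$ composed with the canonical conditional expectation $D\rtimes\Z\to D$. (Here \cite{BMRS2018}*{Proposition 4.9} applies because $\overline{\Pi}_{\zeta^{-1}}$ preserves each of $D_{0}$ and $D_{1,x}$.) The action $\widetilde{\eta}$ of $\textup{Bos}(\Aut(D,\overline{\Pi},\phi))$ on $D\rtimes\Z$ coming from part~(2) and the direct-sum analogue of Proposition~\ref{prop:bos_act_lift} preserves this filtration, so $(\textup{Bos}(\Aut(D,\overline{\Pi},\phi)),\widetilde{\eta})$ is an object of $\mathcal{C}(\widetilde{D\rtimes\Z})$, and initiality of $\textup{QISO}(\widetilde{D\rtimes\Z})$ furnishes a compact quantum group morphism from it to $\textup{Bos}(\Aut(D,\overline{\Pi},\phi))$. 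Conversely, writing $\Bialg{Q}=\textup{QISO}(\widetilde{D\rtimes\Z})$, filtration preservation of its canonical action $\varrho$ forces $\varrho(v^{r})=v^{r}\otimes u^{r}$ with $u$ a group-like unitary and $\varrho(E_{ij,x})=\sum_{y,k,l}E_{kl,y}\otimes q^{kl}_{ij,yx}$; since $\varrho$ preserves the induced state, $\varrho|_{D}$ preserves $\phi$, so Wang's theorem gives the $\zeta=1$ relations and the comultiplication for the $q$'s, the crossed-product relation gives $uq^{kl}_{ij,yx}u^{*}=\zeta^{\bullet}q^{kl}_{ij,yx}$ with the appropriate exponent $\bullet$, and the change of variables $w^{kl}_{ij,yx}\defeq u^{d^{y}_{l}-d^{y}_{k}}q^{kl}_{ij,yx}$ — exactly as in the proof of Proposition~\ref{prop:Boson_isometry} — presents $Q$ by a unitary $u$ and the $w$'s subject to~\eqref{eq:dirsum_cond1}--\eqref{eq:dirsum_cond5}, the $z$\nb-commutation relation, and the $\Comult[C]$\nb-formula of part~(2). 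As these are the defining relations of $\Bialg{C}=\textup{Bos}(\Aut(D,\overline{\Pi},\phi))$, the universal property of $C$ returns a morphism $\textup{Bos}(\Aut(D,\overline{\Pi},\phi))\to\textup{QISO}(\widetilde{D\rtimes\Z})$ inverse to the previous one, whence $\textup{QISO}(\widetilde{D\rtimes\Z})\cong\textup{Bos}(\Aut(D,\overline{\Pi},\phi))\cong\Aut(D,\overline{\Pi},\phi)\rtimes\Z$, the last isomorphism being part~(2).

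The main obstacle is not conceptual but combinatorial: keeping the $\zeta$\nb-exponents and the block labels $x,y,w$ straight in~\eqref{eq:dirsum_cond1}--\eqref{eq:dirsum_cond5} while re-running the unitarity check for the fundamental element $u$, the coassociativity and cancellation checks for $\Comult[A]$ (via the density of $\{a\in A\mid a\boxtimes_{\zeta}1_{A}\in\Comult[A](A)(1\boxtimes_{\zeta}A)\}$, as in Theorem~\ref{the:qnt_aut_matrx}), and the descent from the $\zeta=1$ relations to~\eqref{eq:dirsum_cond1}--\eqref{eq:dirsum_cond5}. Every conceptual ingredient — the lift to the bosonisation, the appeal to Wang's theorem for $\bigoplus_{x}\Mat{n_{x}}(\C)$, the identification $\Cont(\T)\boxtimes_{\zeta}(-)\cong(-)\rtimes\Z$, and the filtration construction of \cite{BMRS2018}*{Proposition 4.9} — is already available from Sections~\ref{sec:act_brd_cpt}--\ref{sec:univ-prop}.
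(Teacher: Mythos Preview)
Your proposal is correct and follows essentially the same approach as the paper: the paper's own ``proof'' of Theorem~\ref{the:dirsum_bos_univ} consists of the single sentence ``by the arguments analogous to those of Section~\ref{sec:Boson} and Section~\ref{sec:univ-prop}'', and your sketch spells out precisely those analogies --- part~(2) via Theorem~\ref{the:Boson}, part~(1) via Theorem~\ref{the:univ_obj} and Proposition~\ref{prop:bos_act_lift}, part~(3) via Example~\ref{ex:crossed_filt} and Proposition~\ref{prop:Boson_isometry}. If anything, you have supplied considerably more detail than the paper does.
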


 \begin{bibdiv}
  \begin{biblist}
  \bib{B2005}{article}{
  author={Banica, Teodor},
  title={Quantum automorphism groups of homogeneous graphs},
  date={2005},
  issn={0022-1236},
  journal={J. Funct. Anal.},
  volume={224},
  number={2},
  pages={243\ndash 280},
  doi={10.1016/j.jfa.2004.11.002},
}

\bib{B2005a}{article}{
  author={Banica, Teodor},
  title={Quantum automorphism groups of small metric spaces},
  date={2005},
  issn={0030-8730},
  journal={Pacific J. Math.},
  volume={219},
  number={1},
  pages={27\ndash 51},
  doi={10.2140/pjm.2005.219.27},
}

\bib{BG2010a}{article}{
  author={Banica, Teodor},
  author={Goswami, Debashish},
  title={Quantum isometries and noncommutative spheres},
  date={2010},
  issn={0010-3616},
  journal={Comm. Math. Phys.},
  volume={298},
  number={2},
  pages={343\ndash 356},
  doi={10.1007/s00220-010-1060-5},
}

\bib{BS2013}{article}{
  author={Banica, Teodor},
  author={Skalski, Adam},
  title={Quantum symmetry groups of {$C^\ast $}-algebras equipped with orthogonal filtrations},
  date={2013},
  issn={0024-6115},
  journal={Proc. Lond. Math. Soc. (3)},
  volume={106},
  number={5},
  pages={980\ndash 1004},
  doi={10.1112/plms/pds071},
}

\bib{BG2009a}{article}{
  author={Bhowmick, Jyotishman},
  author={Goswami, Debashish},
  title={Quantum group of orientation-preserving {R}iemannian isometries},
  date={2009},
  issn={0022-1236},
  journal={J. Funct. Anal.},
  volume={257},
  number={8},
  pages={2530\ndash 2572},
  doi={10.1016/j.jfa.2009.07.006},
}

\bib{BMRS2018}{article}{
  author={Bhowmick, Jyotishman},
  author={Mandal, Arnab},
  author={Roy, Sutanu},
  author={Skalski, Adam},
  title={Quantum Symmetries of the Twisted Tensor Products of C*-Algebras},
   journal={Comm. Math. Phys.},
   volume={368},
   date={2019},
   number={3},
   pages={1051--1085},
   issn={0010-3616},
   doi={10.1007/s00220-018-3279-5},
}

\bib{B2003}{article}{
  author={Bichon, Julien},
  title={Quantum automorphism groups of finite graphs},
  date={2003},
  issn={0002-9939},
  journal={Proc. Amer. Math. Soc.},
  volume={131},
  number={3},
  pages={665\ndash 673},
  doi={10.1090/S0002-9939-02-06798-9},
}

\bib{G2009}{article}{
  author={Goswami, Debashish},
  title={Quantum group of isometries in classical and noncommutative geometry},
  date={2009},
  issn={0010-3616},
  journal={Comm. Math. Phys.},
  volume={285},
  number={1},
  pages={141\ndash 160},
  doi={10.1007/s00220-008-0461-1},
}

\bib{G2020}{article}{
  author={Goswami, Debashish},
  title={Non-existence of genuine (compact) quantum symmetries of compact, connected smooth manifolds},
  date={2020},
  issn={0001-8708},
  journal={Adv. Math.},
  volume={369},
  pages={107181, 25},
  doi={10.1016/j.aim.2020.107181},
}

\bib{GJ2018}{article}{
  author={Goswami, Debashish},
  author={Joardar, Soumalya},
  title={Non-existence of faithful isometric action of compact quantum groups on compact, connected {R}iemannian manifolds},
  date={2018},
  issn={1016-443X},
  journal={Geom. Funct. Anal.},
  volume={28},
  number={1},
  pages={146\ndash 178},
  doi={10.1007/s00039-018-0437-z},
}

\bib{JM2018}{article}{
  author={Joardar, Soumalya},
  author={Mandal, Arnab},
  title={Quantum symmetry of graph {$C^*$}-algebras associated with connected graphs},
  date={2018},
  issn={0219-0257},
  journal={Infin. Dimens. Anal. Quantum Probab. Relat. Top.},
  volume={21},
  number={3},
  pages={1850019, 18},
  doi={10.1142/S0219025718500194},
}

\bib{KMRW2016}{article}{
  author={Kasprzak, Pawe\l },
  author={Meyer, Ralf},
  author={Roy, Sutanu},
  author={Woronowicz, Stanis\l aw~Lech},
  title={Braided quantum {$\rm SU(2)$} groups},
  date={2016},
  issn={1661-6952},
  journal={J. Noncommut. Geom.},
  volume={10},
  number={4},
  pages={1611\ndash 1625},
  doi={10.4171/JNCG/268},
}

\bib{M1994}{article}{
  author={Majid, Shahn},
  title={Cross products by braided groups and bosonization},
  date={1994},
  issn={0021-8693},
  journal={J. Algebra},
  volume={163},
  number={1},
  pages={165\ndash 190},
  doi={10.1006/jabr.1994.1011},
}

\bib{MR2019a}{article}{
  author={Meyer, Ralf},
  author={Roy, Sutanu},
  title={Braided free orthogonal quantum groups},
  date={2021},
  jounral={Int. Math. Res. Not. IMRN},
  doi={https://doi.org/10.1093/imrn/rnaa379},
}

\bib{MRW2012}{article}{
  author={Meyer, Ralf},
  author={Roy, Sutanu},
  author={Woronowicz, Stanis\l aw~Lech},
  title={Homomorphisms of quantum groups},
  date={2012},
  issn={1867-5778},
  journal={M\"{u}nster J. Math.},
  volume={5},
  pages={1\ndash 24},
  eprint={http://nbn-resolving.de/urn:nbn:de:hbz:6-88399662599},
}

\bib{MRW2014}{article}{
  author={Meyer, Ralf},
  author={Roy, Sutanu},
  author={Woronowicz, Stanis\l aw~Lech},
  title={Quantum group-twisted tensor products of {C{$^*$}}-algebras},
  date={2014},
  issn={0129-167X},
  journal={Internat. J. Math.},
  volume={25},
  number={2},
  pages={1450019, 37},
  doi={10.1142/S0129167X14500190},
}

\bib{MRW2016}{article}{
  author={Meyer, Ralf},
  author={Roy, Sutanu},
  author={Woronowicz, Stanis\l aw~Lech},
  title={Quantum group-twisted tensor products of {${\rm C}^*$}-algebras. {II}},
  date={2016},
  issn={1661-6952},
  journal={J. Noncommut. Geom.},
  volume={10},
  number={3},
  pages={859\ndash 888},
  doi={10.4171/JNCG/250},
}

\bib{R1985}{article}{
  author={Radford, David~E.},
  title={The structure of {H}opf algebras with a projection},
  date={1985},
  issn={0021-8693},
  journal={J. Algebra},
  volume={92},
  number={2},
  pages={322\ndash 347},
  doi={10.1016/0021-8693(85)90124-3},
}

\bib{RR2021}{article}{
  author={Rahaman, Atibur},
  author={Roy, Sutanu},
  title={Quantum $E(2)$ groups for complex deformation parameters},
  date={2021},
  journal={Rev. Math. Phys.},
  volume={33},
  pages={2250021, 28},
  doi={https://doi.org/10.1142/S0129055X21500215},
}

\bib{R2015}{article}{
      author={Roy, Sutanu},
       title={The {D}rinfeld double for {$C^*$}-algebraic quantum groups},
        date={2015},
        ISSN={0379-4024},
     journal={J. Operator Theory},
      volume={74},
      number={2},
       pages={485\ndash 515},
         doi={10.7900/jot.2014sep04.2053},
      review={\MR{3431941}},
}

\bib{SW2018}{article}{
  author={Schmidt, Simon},
  author={Weber, Moritz},
  title={Quantum symmetries of graph {$C^*$}-algebras},
  date={2018},
  issn={0008-4395},
  journal={Canad. Math. Bull.},
  volume={61},
  number={4},
  pages={848\ndash 864},
  doi={10.4153/CMB-2017-075-4},
}

\bib{W1998}{article}{
  author={Wang, Shuzhou},
  title={Quantum symmetry groups of finite spaces},
  date={1998},
  issn={0010-3616},
  journal={Comm. Math. Phys.},
  volume={195},
  number={1},
  pages={195\ndash 211},
  doi={10.1007/s002200050385},
}

\bib{W1998a}{incollection}{
  author={Woronowicz, Stanis\l aw~Lech},
  title={Compact quantum groups},
  date={1998},
  booktitle={Sym\'{e}tries quantiques ({L}es {H}ouches, 1995)},
  publisher={North-Holland, Amsterdam},
  pages={845\ndash 884},
}

\bib{W1987a}{article}{
  author={Woronowicz, Stanis\l aw~Lech},
  title={Compact matrix pseudogroups},
  date={1987},
  issn={0010-3616},
  journal={Comm. Math. Phys.},
  volume={111},
  number={4},
  pages={613\ndash 665},
}
  \end{biblist}
\end{bibdiv}
\end{document}